\newlist{steps}{enumerate}{1}
\setlist[steps, 1]{label = Step \arabic*:}
\DeclareRobustCommand\widecheck[1]{{\mathpalette\@widecheck{#1}}}
\def\@widecheck#1#2{%
   \setbox\z@\hbox{\m@th$#1#2$}%
   \setbox\tw@\hbox{\m@th$#1%
      \widehat{%
         \vrule\@width\z@\@height\ht\z@
         \vrule\@height\z@\@width\wd\z@}$}%
   \dp\tw@-\ht\z@
   \@tempdima\ht\z@ \advance\@tempdima2\ht\tw@ \divide\@tempdima\thr@@
   \setbox\tw@\hbox{%
      \raise\@tempdima\hbox{\scalebox{1}[-1]{\lower\@tempdima\box\tw@}}}%
   {\ooalign{\box\tw@ \cr \box\z@}}}
\theoremstyle{plain}
\newtheorem{thm}{Theorem}[section]
\crefname{thm}{Theorem}{Theorems}
\Crefname{thm}{Theorem}{Theorems}
\newtheorem{prop}[thm]{Proposition}
\crefname{prop}{Proposition}{Propositions}
\Crefname{prop}{Proposition}{Propositions}
\newtheorem{lem}[thm]{Lemma}
\crefname{lem}{Lemma}{Lemmas}
\Crefname{lem}{Lemma}{Lemmas}
\newtheorem{cor}[thm]{Corollary}
\crefname{cor}{Corollary}{Corollaries}
\Crefname{cor}{Corollary}{Corollaries}
\crefname{claim}{Claim}{Claims}
\Crefname{claim}{Claim}{Claims}
\crefname{property}{Property}{Properties}
\Crefname{property}{Property}{Properties}
\crefname{problem}{Problem}{Problems}
\Crefname{problem}{Problem}{Problems}
\newtheorem{conjecture}[thm]{Conjecture}
\crefname{conjecture}{Conjecture}{Conjecture}
\Crefname{conjecture}{Conjecture}{Conjecture}
\theoremstyle{definition}
\newtheorem{defn}[thm]{Definition}
\crefname{defn}{Definition}{Definitions}
\Crefname{defn}{Definition}{Definitions}
\crefname{notation}{Notation}{Notations}
\Crefname{notation}{Notation}{Notations}
\crefname{convention}{Convention}{Conventions}
\Crefname{convention}{Convention}{Conventions}
\crefname{cond}{Condition}{Conditions}
\Crefname{cond}{Condition}{Conditions}
\crefname{assum}{Assumption}{Assumptions}
\Crefname{assum}{Assumption}{Assumptions}
\crefname{conj}{Conjecture}{Conjectures}
\Crefname{conj}{Conjecture}{Conjectures}
\theoremstyle{remark}
\newtheorem{rem}[thm]{Remark}
\crefname{rem}{Remark}{Remarks}
\Crefname{rem}{Remark}{Remarks}
\crefname{ex}{Example}{Examples}
\Crefname{ex}{Example}{Examples}
\crefname{section}{Section}{Sections}
\Crefname{section}{Section}{Sections}
\crefname{subsection}{Subsection}{Subsections}
\Crefname{subsection}{Subsection}{Subsections}
\crefname{figure}{Figure}{Figures}
\Crefname{figure}{Figure}{Figures}
\newtheorem*{acknowledgement}{Acknowledgement}
\newcommand{\Z}{\mathbb{Z}}
\newcommand{\Q}{\mathbb{Q}}
\newcommand{\Coker}{\mathop{\mathrm{Coker}}\nolimits}
\newcommand{\im}{\operatorname{Im}}
\newcommand{\Hom}{\mathop{\mathrm{Hom}}\nolimits}
\newcommand{\supp}{\mathop{\mathrm{supp}}\nolimits}
\newcommand{\id}{\mathrm{id}}
\newcommand{\ind}{\mathop{\mathrm{ind}}\nolimits}
\newcommand{\G}{\mathcal G}
\newcommand{\C}{\mathbb{C}}
\newcommand{\s}{\mathfrak{s}}
\newcommand{\wh}{\widehat}
\newcommand{\pr}{\text{pr}}
\newcommand{\al}{\alpha}
\def\om{\omega}
\def\Om{\Omega}
\def\Spinc{\text{Spin}^c}
\newcommand{\R}{\mathbb R}
\def\ker{\operatorname{Ker}}
\def\dim{\operatorname{dim}}
\def\Hom{\operatorname{Hom}}
\def\Con{\mathcal{C}}
\def\id{\operatorname{Id}}
\def\ind{\operatorname{ind}}
\def\cL{\mathcal{L}}
\newcommand{\mbar}[1]{{\ooalign{\hfil#1\hfil\crcr\raise.167ex\hbox{--}}}}
\def\dvol{d\operatorname{vol}}
\def\cV{\mathcal{V}}
\def\cF{\mathcal{F}}
\def\cU{\mathcal{U}}
\def\wt{\widetilde}
\title[Adjunction inequality, BF-type  refinement  of  KM-invariant]{An adjunction inequality for the Bauer--Furuta type invariants, with applications to sliceness and 4-manifold topology
}
\author{Nobuo Iida}
\address{Graduate School of Mathematical Sciences, the University of Tokyo, 3-8-1 Komaba, Meguro, Tokyo 153-8914, Japan}
\email{iida@ms.u-tokyo.ac.jp}
\author{Anubhav Mukherjee}
\address{School of Mathematics \\ Georgia Institute
of Technology \\  Atlanta  \\ Georgia \\ USA}
\email{anubhavmaths@gatech.edu}
\author{Masaki Taniguchi}
\address{2-1 Hirosawa, Wako, Saitama 351-0198, Japan}
\email{masaki.taniguchi@riken.jp}
\date{}
\begin{document}

\begin{abstract} 
Our main result gives an adjunction inequality for embedded surfaces in certain $4$-manifolds with contact boundary under a non-vanishing assumption on the Bauer--Furuta type invariants. Using this, we give infinitely many knots in $S^3$ that are not smoothly H-slice (that is, bounding a null-homologous disk) in many $4$-manifolds but they are topologically H-slice. In particular, we give such knots in the boundaries of the punctured elliptic surfaces $E(2n)$. In addition, we give obstructions to codimension-0 orientation-reversing embedding of weak symplectic fillings with $b_3=0$ into closed symplectic 4-manifolds with $b_1=0$ and $b_2^+\equiv 3 \operatorname{mod} 4$. From here we prove a Bennequin type inequality for symplectic caps of $(S^3,\xi_{std})$. We also show that any weakly symplectically fillable $3$-manifold bounds a $4$-manifold with at least two smooth structures.

\end{abstract}

\maketitle

\section{Introduction} 


The adjunction inequality is one of the important topics in low dimensional topology that put constraints on the genus of embedded surfaces in 4-manifolds with non-trivial gauge-theoretic invariants. Kronheimer and Mrowka have proven an adjunction-type inequality for 4-manifolds with non-trivial Donaldson polynomial invariants in \cite{KM93, KM95, KM95II} and used to give an affirmative answer to the long standing Milnor conjecture about 4-genus of torus knots. 
After Seiberg--Witten theory came into the picture, the adjunction inequality was proven for $\C \mathbb{P}^2$ by Kronheimer and Mrowka \cite{KM94} and later generalized to the other manifolds. (For generalizations, see \cite{HT16, Ko16, St03, Fr05, FKM01, MST96, MR06, OS00, OS06, KLS18II}.)
Moreover, there are several relative versions (i.e. for surfaces with boundary in 4-manifolds with boundary) of adjunction inequalities \cite{MR06, HR20, manolescu}.  
These inequalities have many applications such as, detecting smooth 4-genus, finding exotic pairs of 4-manifolds and obstructing symplectic structures. 
 Recently, a relation between exotic smooth structures and (H-)sliceness attracts attention \cite{manolescu, MP21}. 

 Our main theorems which give applications in Subsection 1.2, 1.3 and 1.4 are adjunction-type inequalities for the following gauge theoretic invariants: 
 a Bauer--Furuta type refinement $
    \Psi(W, \xi , \s )$ \cite{I19} of the Kronheimer--Mrowka's invariant \cite{KM97} for a compact spin$^c$ 4-manifold $W$ with contact boundary $(Y,\xi)$ and $b_3(W)=0$, which is defined by the first author.

More specifically, we have the following result which, for simplicity, we stated only for the genus-0 case. See \cref{main2'} for a similar inequality for higher genus surfaces.

\begin{thm}\label{main2} 
Let $(W, \s)$ be an compact, oriented, Spin$^c$ 4-manifold with $b_3(W)=0$ whose boundary is a contact $3$-manifold $(Y, \xi)$  and $\s|_Y = \s_\xi$ ( here $\s_\xi$ denotes the induced Spin$^c$-structure from the 2-plane field $ \xi$).
If $\Psi (W, \xi, \s) \neq 0$, then non-torsion homology class in $H_2(W, \partial W; \Z)$ cannot be realized by a smoothly embedded 2-sphere whose self-intersection number is non-negative.
\end{thm}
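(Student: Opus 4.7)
The plan is to argue by contradiction: assume a non-torsion class in $H_2(W,\partial W;\Z)$ is represented by a smoothly embedded 2-sphere $S$ with $[S]\cdot[S]=n\geq 0$, and deduce $\Psi(W,\xi,\s)=0$, contradicting the hypothesis.

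\textbf{Step 1 (Blow-up reduction to $n=0$).} If $n>0$, I would blow up $W$ at $n$ distinct points of $S$ lying in the interior (away from $\partial W$) to produce $\widetilde W = W \# n\,\overline{\CP^2}$, and take the proper transform $\widetilde S$, which is a smoothly embedded 2-sphere with $[\widetilde S]^{2}=0$. Let $\widetilde\s$ be the Spin$^c$ extension of $\s$ with $\langle c_1(\widetilde\s),E_i\rangle=\pm 1$ on each exceptional sphere. A blow-up formula for $\Psi$ (analogous to the classical one for the Kronheimer--Mrowka invariant) should preserve nonvanishing, giving $\Psi(\widetilde W,\xi,\widetilde\s)\neq 0$. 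Since $[\widetilde S]$ remains non-torsion, it suffices to treat $n=0$.

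\textbf{Step 2 (Neck-stretching along $S^2\times S^1$).} With $[S]^2=0$, the tubular neighborhood $\nu(S)$ is diffeomorphic to $S^2\times D^2$ with $\partial\nu(S)\cong S^2\times S^1$. I decompose $W = W_0 \cup_{S^2\times S^1}\nu(S)$, verifying that the $b_3=0$ hypothesis is compatible with this cut, and insert a long cylindrical neck $[-T,T]\times S^2\times S^1$ carrying the product positive-scalar-curvature metric (round $S^2$ times flat $S^1$). I then send $T\to\infty$.

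\textbf{Step 3 (Vanishing on the $\nu(S)$ side).} The goal is to show that the relative Bauer--Furuta-type contribution of $\nu(S)$ is null-homotopic. The key geometric inputs are: (a) $S^2\times S^1$ admits a PSC metric, so the Seiberg--Witten flow on $\R\times S^2\times S^1$ has no irreducible critical points and a trivial Conley index on its irreducible part; (b) because $[S]$ is non-torsion, $c_1(\s)$ pairs nontrivially with $[S]$, ruling out any contribution from the reducible locus on $\nu(S)$ by a standard curvature/energy computation. Together these produce a null-homotopy of the finite-dimensional approximation associated with the $\nu(S)$ end.

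\textbf{Step 4 (Gluing and contradiction).} A gluing/composition formula for $\Psi$ along $S^2\times S^1$, compatible with the contact-boundary setup of \cite{I19}, expresses $\Psi(W,\xi,\s)$ as a composition that factors through the map of Step~3. The null-homotopy then forces $\Psi(W,\xi,\s)=0$, contradicting our assumption and completing the proof.

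\textbf{Main obstacle.} The hardest step is Step 4: establishing a Bauer--Furuta-level gluing formula along the non-contact hypersurface $S^2\times S^1$ that correctly interacts with the contact-boundary construction of $\Psi$ in \cite{I19}. For the numerical Kronheimer--Mrowka invariant, a PSC metric on $S^2\times S^1$ directly empties the moduli space; but at the stable-homotopy level one must check that the Conley-index approximation on the $\nu(S)$ side is genuinely null-homotopic as an equivariant stable map, and that the composition formula is valid in the presence of contact boundary data on $W_0$. This interplay between the PSC neck analysis, the finite-dimensional approximation, and the contact-boundary framework is the principal technical hurdle.
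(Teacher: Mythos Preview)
Your overall strategy---blow up to reduce to self-intersection zero, then neck-stretch along $S^2\times S^1$ and exploit positive scalar curvature---matches the paper's in spirit, but the paper takes a substantially more direct route that avoids your Step~4 entirely. Rather than proving a gluing formula along $S^2\times S^1$ and showing one factor is null-homotopic, the paper argues as follows: if $\Psi(W,\xi,\s)\neq 0$, then for every neck length $T$ the finite-dimensional approximation of the (perturbed) Seiberg--Witten map on $W^+(T)$ is not null-homotopic, hence surjective, hence $0$ has a preimage; a limiting argument then produces a genuine solution in $\mathcal{M}_\eta(T)$ for all $T$ (Lemma~3.2). From this one-parameter family of solutions the paper extracts, via a boundedness estimate for the Chern--Simons--Dirac functional across the neck (Lemma~3.4) and the Kronheimer--Mrowka compactness argument, a translation-invariant solution on $\R\times M$ in temporal gauge (Proposition~3.3). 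A Weitzenb\"ock/maximum-principle computation using PSC on $M$ and a suitably scaled $\eta$ then rules out such a solution (Proposition~3.5), giving the contradiction.

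The advantage of the paper's approach is exactly that it sidesteps what you correctly flag as your main obstacle: no stable-homotopy-level gluing along the PSC hypersurface is needed, because the passage from ``$\Psi\neq 0$'' to ``moduli space nonempty'' is immediate from surjectivity of a non-null-homotopic map between spheres. Your route would require a genuine product/composition formula for $\Psi$ along a non-contact hypersurface with $b_1>0$, interacting correctly with the conical-end framework of \cite{I19}; this is not established in the paper and would be significant new work. One further correction to your Step~3(b): $[S]$ being non-torsion does \emph{not} imply $\langle c_1(\s),[S]\rangle\neq 0$. What non-torsion buys is that the restriction map $H^2(W,\partial W;\R)\to H^2(S^2\times S^1;\R)$ is nonzero, so one can choose a closed $2$-form $\hat\eta$ on $W$ restricting to a non-exact form $\eta$ on $M$; it is this perturbation (not $c_1$) that kills the reducible on the cylinder after the PSC argument forces the spinor to vanish.
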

\begin{rem}
In the proof of \cref{main2}, we adopt a {\it neck-stretching argument} developed in \cite{Fr05} and \cite{KM94} to obtain an adjunction type inequality in Seiberg--Witten theory. 
A difficulity to prove \cref{main2} comes from non-compactness of base 4-manifolds appearing in the definition of the Bauer--Furuta type refinement $\Psi(W, \xi , \s )$ \cite{I19} of Kronheimer--Mrowka's invariant.   
\end{rem}

For our applications in Subsection 1.2-1.4, we combine \cref{main2} with the connected sum formula \cite[Theorem 4.4]{I19} and the non-vanishing result \cite[Corollary 4.3]{I19} of the first author's invariant.
Also, we need to establish the following non-vanishing result: 
\begin{thm}\label{b=1'}
Let $(W, \om)$ be a weak symplectic filling of a contact 3-manifold $(Y,\xi)$ with $b_3(W)=0$. Suppose $Y$ is a rational homology 3-sphere with positive scalar curvature metric. 
We also consider a closed $\Spinc$ 4-manifold $(X, \s_X)$ with $b_1(X)=0$ such that the $S^1$-equivariant Bauer--Furuta invariant $BF(X, \s_X)$ is not $S^1$-null-homotopic. 
Then,  the relative Bauer--Furuta invariant $BF (W\# X, \s_{\om} \# \s_X)$ is equal to $BF(X, \s_X)$ and in particular does not vanish as an $S^1$-equivariant stable homotopy class.
 
\end{thm}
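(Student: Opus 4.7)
The proof plan combines two results from \cite{I19}: the connected sum formula (Theorem 4.4) and the identification of Iida's invariant $\Psi$ for weak symplectic fillings with positive scalar curvature boundary (Corollary 4.3).

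First, I would invoke the connected sum formula, which for $W$ with contact boundary $(Y,\xi)$ and $b_3(W)=0$, and $X$ a closed $\mathrm{Spin}^c$ $4$-manifold with $b_1(X)=0$, decomposes the relative Bauer--Furuta invariant of $W \# X$ as
\[
BF(W \# X, \s_\om \# \s_X) \simeq \Psi(W, \xi, \s_\om) \wedge BF(X, \s_X)
\]
as $S^1$-equivariant stable homotopy classes. Here $\Psi(W, \xi, \s_\om)$ is viewed as a self-map of the Seiberg--Witten Floer homotopy type of $(Y, \s_\xi)$, and the smash product is taken in the equivariant stable category.

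Second, I would exploit the hypothesis that $Y$ is a rational homology $3$-sphere carrying a positive scalar curvature metric. Under this assumption, all Seiberg--Witten solutions on $Y$ for $\s_\xi$ are reducible, so the Floer spectrum $\mathrm{SWF}(Y, \s_\xi)$ simplifies $S^1$-equivariantly to a single (equivariant) sphere. In this setting, Iida's non-vanishing statement \cite[Corollary 4.3]{I19} can be upgraded to the identification of $\Psi(W, \xi, \s_\om)$ with the identity self-map of $\mathrm{SWF}(Y, \s_\xi)$ in the equivariant stable category, so that smashing with it acts as the identity on $BF(X, \s_X)$.

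Combining these two inputs immediately yields $BF(W \# X, \s_\om \# \s_X) \simeq BF(X, \s_X)$, and the non-$S^1$-null-homotopy is then inherited from the hypothesis on $X$. The main delicate point I expect is not the connected sum formula, which is available as a black box, but rather the upgrade from non-vanishing of $\Psi(W, \xi, \s_\om)$ to its identification with the identity. This is where the PSC and rational homology sphere hypotheses are essential, since they force the ambient target spectrum to be a sphere on which equivariant self-equivalences are classified by degree, while Taubes' perturbation produces a unique irreducible Seiberg--Witten solution on $W$ for the canonical $\mathrm{Spin}^c$ structure and pins down that degree.
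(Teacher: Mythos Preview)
Your overall strategy---factor the invariant of $W\#X$ through a piece coming from $W$ and then identify that piece with the identity---is exactly the paper's strategy, but you have conflated two different invariants and cited the wrong gluing formula. Iida's invariant $\Psi(W,\xi,\s_\omega)$ from \cite{I19} is a \emph{non-equivariant} stable map between ordinary spheres, not a self-map of $SWF(Y,\s_\xi)$, and the connected sum formula \cite[Theorem~4.4]{I19} computes $\Psi(W\#X,\xi,\cdot)$ rather than $BF(W\#X,\cdot)$; moreover that formula explicitly forgets the $S^1$-action on $BF(X,\s_X)$, so it cannot yield the $S^1$-equivariant conclusion required here.

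What the paper actually does is work with the $S^1$-equivariant relative invariant $BF(W,\s_\omega)$ itself. Under the PSC hypothesis $SWF(Y,\s_\xi)\cong(\C^\delta)^+$, and the key step (\cref{hom1}) is to prove that $BF(W,\s_\omega)$ is an $S^1$-equivariant homotopy equivalence. The non-equivariant equivalence and the vanishing $b^+(W)=0$ come from the Floer homotopy contact invariant of \cite{IT20}; one then checks that the $S^1$-fixed part $BF(W,\s_\omega)^{S^1}$ is induced by a linear inclusion $\R^m\hookrightarrow\R^{m+b^+(W)}$ and hence an equivalence once $b^+(W)=0$, and invokes a lemma of tom Dieck \cite{TD87} to upgrade this to an $S^1$-equivariant equivalence. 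The factorization of $BF(W\#X)$ then uses the gluing formula for relative Bauer--Furuta invariants \cite{B04,Man07}, not the connected sum formula for $\Psi$. Your remark that ``Taubes' unique solution pins down the degree'' is the mechanism behind the non-equivariant input, but it does not address the equivariant upgrade; that is precisely where $b^+(W)=0$ and the tom Dieck step are needed.
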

The proof of \cref{b=1'} uses a Floer homotopy contact invariant introduced in \cite[Theorem 1.1]{IT20} and a gluing result proven in \cite[Theorem 1.2]{IT20}. 

We now list some applications:

\subsection{Bennequin type inequality for symplectic caps and its application to obstruct sliceness of knots.} 

It is a classical problem to understand the topology of symplectic 4-manifolds with contact boundary. There are two types of such manifolds: one is the case when the boundary contact structure is convex with respect to the symplectic form, we call them {\it symplectic fillings}, and the other is the case when the boundary contact structure is concave, known as {\it symplectic caps}. (See Section~\ref{symplectic} for details.) The smooth topology of symplectic fillings is very rigid. For example, Gromov proved that $S^3$ has a unique symplectically fillable contact structure $\xi_{std}$ and every symplectic filling is diffeomorphic to the 4-ball up to blow-ups \cite{Gromov1985}. Moreover, many contact 3-manifolds have no symplectic fillings. On the other hand, every contact 3-manifold has a symplectic cap \cite{ehcap} and one can easily construct a symplectic cap of $(S^3,\xi_{std})$ by starting with a closed symplectic 4-manifold and removing a Darboux 4-ball. Thus, the topology of symplectic caps are harder to study in general and much less is known. 

Historically, there are many tools from gauge theory which apply well to symplectic fillings of contact 3-manifolds. These tools allow one to demonstrate, for example, that various knots are not slice in the filling or that some fillings can not be embedded into some  4-manifolds. Mrowka--Rollin's generalized Bennequin inequality \cite[Theorem A]{MR06} can be useful to understand the minimal genus of embedded smooth surfaces in a given filling, bounded by a fixed knot. We provides the first examples of results of this type that can be applied to symplectic caps rather than fillings. Since symplectic caps are far more ubiquitous, this is a dramatic improvement. In particular, the fact that $S^3$ admits non-trivial symplectic caps means that these tools can approach some important problems on relative genus bounds in symplectic caps. 

\begin{thm}\label{ben}
Let $(X,\omega)$ be a symplectic cap for $(S^3,\xi_{std})$ with $b_1(X)= 0$ and $b^+_2(X) \equiv 3 \operatorname{mod} 4$ and $K$ be a knot in $S^3$ If $K$ has a Legendrian representative in $(S^3,\xi_{std})$ with $tb(K)>0$, then the knot $K$ does not bound a smooth disk $D$ in $X$ with $[D]\cdot [D]\geq 0$. Here $tb(K)\in \Z$ is the Thurston--Bennequin number of $K$ defined in Section~\ref{contact}.

\end{thm}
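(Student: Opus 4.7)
The plan is to derive a contradiction by applying \cref{main2} to a suitable auxiliary $4$-manifold built from $X$ and a Weinstein $2$-handle. Suppose for contradiction that $K$ has a Legendrian representative with $tb(K) > 0$ bounding a smoothly embedded disk $D \subset X$ with $[D]\cdot[D] = n \geq 0$.

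First I would form $W := X \cup_{K} h$ by attaching a $2$-handle $h$ to $\partial X = S^{3}$ along the Legendrian $K$ with framing $tb(K) - 1 \geq 0$. Then $\partial W = Y_{K}$ is the result of Legendrian surgery on $(S^{3}, \xi_{std})$ along $K$ and carries the induced contact structure $\xi_{K}$. The core of $h$ is a smoothly embedded disk $D_{h} \subset W$ with $\partial D_{h} = K$ and self-intersection $tb(K) - 1$ relative to the Seifert framing, so that $\Sigma := D \cup D_{h} \subset W$ is a smoothly embedded $2$-sphere with $[\Sigma]\cdot[\Sigma] = n + (tb(K) - 1) \geq 0$.

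Next I would verify the hypotheses of \cref{main2} for $(W, \xi_{K}, \s_{W})$. The key identification is a diffeomorphism $W \cong V_{K} \mathbin{\#} Z$, where $V_{K} := B^{4} \cup_{K} h$ is the Stein filling of $(Y_{K}, \xi_{K})$ by the same Weinstein $2$-handle attachment along $K$, and $Z := X \cup_{S^{3}} B^{4}$ is the closed symplectic $4$-manifold obtained by capping $X$ with the standard filling of $(S^{3}, \xi_{std})$; the connect sum neck between $V_{K}$ and $Z$ collapses to a collar of $\partial X$ inside $W$. Since $b_{1}(Z) = 0$ and $b_{2}^{+}(Z) \equiv 3 \pmod 4$, Bauer's theorem yields $BF(Z, \s_{\omega}) \neq 0$. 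Since $V_{K}$ is a Stein filling with $b_{3}(V_{K}) = 0$, Iida's non-vanishing result \cite[Corollary 4.3]{I19} gives $\Psi(V_{K}, \xi_{K}, \s_{V_{K}}) = \pm 1$. The connected sum formula \cite[Theorem 4.4]{I19} then yields $\Psi(W, \xi_{K}, \s_{W}) = \Psi(V_{K}, \xi_{K}, \s_{V_{K}}) \wedge BF(Z, \s_{Z}) \neq 0$ for the canonical $\Spinc$ structures. Combined with $b_{3}(W) = 0$ (from $b_{1}(X) = 0$) and the non-torsion-ness of $[\Sigma]$ in $H_{2}(W, \partial W; \Z)$, \cref{main2} rules out such a $\Sigma$, yielding the desired contradiction.

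The main obstacle will be the diffeomorphism identification $W \cong V_{K} \mathbin{\#} Z$ together with matching canonical $\Spinc$ structures, so that the connected sum formula applies as stated. A secondary subtlety is the boundary case $n = 0$, $tb(K) = 1$, where $[\Sigma]\cdot[\Sigma] = 0$; in that situation the non-torsion-ness of $[\Sigma]$ in $H_{2}(W, \partial W; \Z)$ must be checked directly from the long exact sequence of the pair, using the structure of $H_{2}(Y_{K})$ for $0$-framed surgery on $K$.
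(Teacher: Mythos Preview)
Your approach is essentially the same as the paper's: form the closed symplectic manifold $Z = X \cup_{S^3} D^4$, take a Stein trace $V_K$, and apply \cref{main2} to $V_K \# Z$ using the non-vanishing coming from the connected sum formula. The paper packages this through \cref{main1}: it observes that the trace $X_n(\overline{K})$ embeds in $Z$ (as a neighbourhood of $D^4 \cup D$), that its orientation reversal $X_{-n}(K)$ is a Stein filling with a geometrically isolated $2$-handle, and then invokes \cref{main1} directly. Your identification $W \cong V_K \# Z$ and your sphere $\Sigma = D \cup D_h$ are an explicit unpacking of that same mechanism.

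There is, however, a genuine gap in your treatment of the boundary case. When $tb(K)=1$ (so your framing is $0$) and $[D]=0$ in $H_2(X,\partial X;\Z)$, your sphere $\Sigma$ is not merely ``to be checked'' --- it is actually \emph{trivial} in $H_2(W,\partial W;\Z)$. Indeed $\partial W = S^3_0(K)$ has $H_2\cong\Z$, and its generator (the capped Seifert surface) maps to the generator of $H_2(V_K)$ under inclusion; since $[D]=0$ forces $[D\cup\Delta]=0$ in $H_2(Z)$, your class $[\Sigma]=(1,0)\in H_2(V_K)\oplus H_2(Z)$ lies in the image of $H_2(\partial W)\to H_2(W)$ and hence vanishes in $H_2(W,\partial W)$. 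Thus \cref{main2} gives no contradiction here. The paper's route through \cref{main1} circumvents exactly this: before applying the adjunction inequality it attaches an additional Weinstein $2$-handle along a Legendrian generator of $H_1(\partial V_K)$, so that the new boundary is a rational homology sphere and the handle-slide sphere is automatically non-torsion in relative $H_2$. You need this extra step; once you include it (or cite \cref{main1}), your argument is complete.
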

Note that when we talk about smooth 4-manifold with boundary, we usually consider the positive orientation on the boundary, whereas when we consider a symplectic cap, the boundary is negatively oriented.

\begin{rem}
 In Theorem~\ref{ben}, the assumption $b^+_2(X) \equiv 3 \operatorname{mod} 4$ is important. 
  For symplectic caps with $b^+_2(X) \equiv 1 \operatorname{mod} 4$, the above Bennequin type inequality does not hold, see Remark~\ref{remark}. Also notice that, in the case of  $b^+_2(X) \equiv 3 \operatorname{mod} 4$, the knot $K$ still can bound a disk of negative self-intersection number, see e.g. Theorem~\ref{top h}. This discussion also demonstrates how difficult it is to control the topology of symplectic caps of a contact 3-manifold as opposed to symplectic fillings. 
\end{rem}
\begin{rem}
Although \cref{ben} obstructs the existence of embedded smooth disks, we can prove a higher genus version by a similar technique. For the precise statement, see \cref{additional}. 
\end{rem}

Theorem~\ref{ben} can give obstructions to the existence of H-slice disks in general 4-manifolds with $S^3$-boundary.
 First, let us explain the definition of H-slice. Let $X$ be a closed, oriented, connected, smooth 4-manifold and let $K$ be a knot in $S^3$. 
In \cite[Definition 6.2]{MMSW19}, Manolescu, Marengon, Sarkar, and Willis defined $K$ to be \textit{smoothly (resp. topologically) H-slice} in $X$ if $K$ bounds a properly embedded smooth (resp. locally-flat)  null-homologous disc in $X^{\circ} = X- \operatorname{Int}{B^4}$.
 There are several known obstructions to smooth H-sliceness in both definite 4-manifolds \cite{OS03, KM13, MMSW19} and indefinite 4-manifolds \cite{manolescu}. Our technique is the first one where the contact structure of the boundary $S^3$ has been used to find obstructions to H-sliceness.  

 Since $S^3$ has a unique tight contact structure \cite{utight}, we denoted by $TB(K)$ the maximal value of Thurston--Bennequin numbers as among all Legendrian representatives of $K$ in $(S^3 ,\xi_{std})$ (for more details, check Section~\ref{contact}). As a corollary of \cref{ben}, we have the following obstruction to H-sliceness: 
\begin{cor}\label{h-slice}
Let $X$ be a closed, symplectic $4$-manifold with $b_1(X)=0$ and $b_2^+(X)\equiv 3 \operatorname{mod} 4$. If $K$ is a knot in $S^3$ with $TB(K)>0$, then it is not smoothly $H$-slice in $X$. 
\end{cor}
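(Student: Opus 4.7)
The plan is to reduce the statement to a direct application of Theorem~\ref{ben}. First I would assume, for contradiction, that $K$ is smoothly H-slice in $X$, so that there exists a properly embedded smooth null-homologous disk $D$ in $X^{\circ} = X - \operatorname{Int}(B^4)$ bounded by $K \subset \partial X^{\circ} = S^3$.

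Next I would observe that if the excised ball $B^4 \subset X$ is chosen to be a Darboux ball for the symplectic form $\omega$, then the restriction of $\omega$ to $X^{\circ}$ makes $(X^{\circ},\omega)$ into a symplectic cap of $(S^3,\xi_{std})$: the contact structure induced on $\partial B^4$ from a Darboux chart is $\xi_{std}$, and the orientation as boundary of $X^{\circ}$ is opposite to the orientation as boundary of $B^4$, which is exactly the concave/cap convention. Since removing an open $4$-ball preserves $b_1$ and $b_2^+$, one has $b_1(X^{\circ}) = 0$ and $b_2^+(X^{\circ}) \equiv 3 \pmod 4$, so $X^{\circ}$ satisfies the hypotheses of Theorem~\ref{ben}.

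Because $D$ is null-homologous, its class $[D] \in H_2(X^{\circ},\partial X^{\circ};\Z)$ is zero, and in particular the self-intersection number $[D]\cdot [D] = 0 \geq 0$. By assumption $K$ admits a Legendrian representative in $(S^3,\xi_{std})$ with $tb > 0$ (taking any Legendrian realizing $TB(K) > 0$). Theorem~\ref{ben} then forbids the existence of such a disk in $X^{\circ}$, giving the desired contradiction.

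I expect no serious obstacle here; the only point that requires a line of justification is the identification of the Darboux-ball complement with a symplectic cap of $(S^3,\xi_{std})$, together with the trivial check that the H-sliceness hypothesis forces the self-intersection to be zero rather than negative. Everything else is an immediate invocation of Theorem~\ref{ben}.
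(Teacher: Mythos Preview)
Your argument is correct. You reduce directly to Theorem~\ref{ben} by observing that $X^{\circ}$, obtained by removing a Darboux ball, is a symplectic cap of $(S^3,\xi_{std})$ satisfying the required hypotheses, and that a null-homologous disk has self-intersection~$0$.

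The paper instead reduces to Theorem~\ref{main1}: it observes that an H-slice disk for $K$ gives an embedding of the $0$-trace $X_0(\overline{K})$ into $X$, while $TB(K)>0$ makes $X_0(K)$ a weak symplectic filling (via Theorem~\ref{cob}) with a geometrically isolated $2$-handle; Theorem~\ref{main1} then forbids the embedding of $\overline{X_0(K)}=X_0(\overline{K})$ into $X$. Since Theorem~\ref{ben} is itself proved in the paper by exactly this trace-embedding argument together with Theorem~\ref{main1}, the two routes are logically equivalent---yours simply invokes the already-packaged Bennequin-type inequality rather than repeating the trace argument, which is arguably cleaner once Theorem~\ref{ben} is available.
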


Using our techniques, we can actually produce a very large class of exotically H-slice knots, i.e. knots that are topologically but not smoothly H-slice in various 4-manifolds.

\begin{cor}\label{infinite}
There is an infinite family of knots $K_i, i\in \Z$ in $S^3$ that are linearly independent in the knot concordance group such that are topologically $H$-slice but not smoothly $H$-slice in any closed symplectic $4$-manifold $X$ with $b_1(X)=0$ and $b_2^+(X)\equiv 3 \operatorname{mod} 4$.
\end{cor}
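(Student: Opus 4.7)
The plan is to combine \cref{h-slice} with a known family of topologically slice knots whose maximal Thurston--Bennequin numbers are positive and which are linearly independent in the smooth concordance group $\mathcal{C}$. Given \cref{h-slice}, the remaining content of \cref{infinite} is purely three-dimensional and amounts to identifying a suitable family already present in the literature.

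First, I would take $K_i := D_+(T_{2,2i+1})$ for $i \geq 1$, the positive untwisted Whitehead double of the $(2,2i+1)$-torus knot. Each $K_i$ has Alexander polynomial $\Delta_{K_i}(t)=1$, so by Freedman's disk theorem $K_i$ bounds a locally flat disk in $B^4$; embedding $B^4 \subset X$, this disk is trivially null-homologous, so $K_i$ is topologically H-slice in every $X$ as in the statement. Next, a standard Legendrian front construction for the positive Whitehead double of a Legendrian knot with non-negative Thurston--Bennequin number produces a Legendrian representative of $K_i$ with positive Thurston--Bennequin invariant (see Hedden's computation of $TB$ for positive Whitehead doubles). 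Applying \cref{h-slice} then shows that no $K_i$ is smoothly H-slice in any closed symplectic $X$ with $b_1(X)=0$ and $b_2^+(X)\equiv 3 \pmod 4$.

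Finally, for linear independence in $\mathcal{C}$, I would invoke the established result that among Whitehead doubles of torus knots there is an infinite $\Z$-linearly independent subfamily; this is due to Hedden--Kirk by instanton-Floer methods, and reproved by Hom using the $\varepsilon$-invariant from knot Floer homology. Passing to such a subfamily and relabeling by $i \in \Z$ preserves the topological H-sliceness and the positivity of $TB$ established above, so the resulting knots satisfy all three required properties. The only genuinely new input is \cref{h-slice}; the rest is a bookkeeping assembly of classical results, and I anticipate no additional obstacles beyond verifying that a chosen linearly independent subfamily can be arranged to consist entirely of knots with $TB>0$, which is automatic for the Whitehead-double family described above.
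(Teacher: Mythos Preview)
Your proposal is correct and essentially identical to the paper's proof: both take positive untwisted Whitehead doubles of $(2,\text{odd})$ torus knots, use Freedman for topological sliceness, observe $TB>0$ to invoke \cref{h-slice}, and cite Hedden--Kirk for linear independence. The paper in fact singles out exactly the subfamily $\{Wh_0^+(T_{2,2^n-1})\}_{n\geq 1}$ from the start, which is the specific family treated by Hedden--Kirk, whereas you arrive at it after ``passing to such a subfamily''; this is the only (cosmetic) difference.
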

\begin{rem}\label{csum}
 We will also show that Corollaries~\ref{h-slice} and ~\ref{infinite} are true when $X$ is replaced with $X_1,  X_1\#X_2,$ or $X_1\#X_2\#X_3$, where $X_i, i=1,2,3,$ are manifolds satisfying the hypothesis of $X$ in the above theorems. We also can take $K_i$ to be topologically $H$-slice knots in $S^4$.
\end{rem}


When $X$ is $K3$, we can do even better. 
\begin{thm}\label{top h}
Let $X$ be $K3, K3\# K3$, or $K3\#K3\#K3$. There is an infinite family of knots $K_i, i\in \Z$ in $S^3$ that are 
\begin{itemize} 

\item linearly independent in the smooth knot concordance group, \item topologically $H$-slice in $S^4$, and in particular in $X$, 
\item smoothly slice in $X$, but 
\item not smoothly $H$-slice in $X$.
\end{itemize}
\end{thm}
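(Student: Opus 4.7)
The plan is to construct a single infinite family $\{K_n\}_{n \in \Z}$ satisfying all four conditions simultaneously, by combining four essentially independent ingredients.

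First, each $K_n$ will be chosen with trivial Alexander polynomial. Freedman's disk theorem then produces a topological slice disk inside a ball $B^4 \subset X^\circ$; since the map $H_2(B^4,S^3) \to H_2(X^\circ,\partial X^\circ)$ is trivial, this disk is automatically null-homologous, so each $K_n$ is topologically $H$-slice both in $S^4$ and in $X$.

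Second, to obstruct smooth $H$-sliceness in $X$, each $K_n$ will be arranged to admit a Legendrian representative in $(S^3,\xi_{std})$ with $tb(K_n)>0$. Because $K3$ has $b_1=0$ and $b_2^+(K3)=3\equiv 3\pmod 4$, the extension of \cref{infinite} recorded in \cref{csum} applies uniformly to each of $X=K3$, $K3\#K3$, and $K3\#K3\#K3$, ruling out a smoothly embedded null-homologous slice disk for any $K_n$.

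Third, we produce an actual smooth slice disk (necessarily non-null-homologous) for each $K_n$ in $X^\circ$. Since $K3$ embeds codimension-$0$ inside $K3\#K3$ and $K3\#K3\#K3$, it suffices to realize such a disk in $K3^\circ$. Starting from a ribbon immersion of $K_n$ in $B^4$ with a controlled number of positive double points --- arising naturally from the construction producing topological sliceness --- we plan to exchange those double points against embedded $2$-spheres present inside a Gompf nucleus (cusp neighborhood) of $K3$. The resulting embedded disk will represent a non-trivial class in $H_2(X;\Z)$, which is exactly what is needed for the combination of smooth sliceness with failure of smooth $H$-sliceness. Finally, linear independence in the smooth knot concordance group will follow from distinguishing the $[K_n]$ by a smooth concordance invariant such as Hom's $\epsilon$ or $\nu^+$ (or the Heegaard Floer $\tau$-invariant on a suitably chosen family, for instance one built from iterated positive torus knots $T_{2,2n+1}$, which is already known to be $\Z$-independent).

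The main obstacle is the third step: producing the explicit smooth disks in $K3^\circ$. Because $K3$ is minimal and contains no embedded $2$-sphere of self-intersection $\pm 1$, the naive blow-up resolution of positive double points is unavailable, and one must instead exploit the specific geometry of the elliptic fibration on $K3$ --- in particular the cusp fiber together with the $(-2)$-sphere components of a nearby singular fiber --- to resolve the double points smoothly while simultaneously preserving triviality of the Alexander polynomial, positivity of the Thurston--Bennequin number, and concordance independence across the whole family.
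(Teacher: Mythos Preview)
Your outline tracks the paper's proof in steps one and two, but diverges significantly in steps three and four, and in both places the paper's route is shorter and your route has real gaps.

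\textbf{Step 3 (smooth sliceness in $K3$).} The paper sidesteps your ``main obstacle'' completely. It takes $K_n = Wh_0^+(T_{2,2^n-1})$, the untwisted positive Whitehead double, and observes that any positive Whitehead double has unknotting number~$1$. It then simply cites \cite[Corollary~2.8]{manolescu}: every knot with unknotting number~$1$ is smoothly slice in $K3$. That result is proved by exactly the mechanism you sketch (an immersed disk in $B^4$ with a single double point, tubed off against a square-zero sphere in $K3$), so your geometric intuition is right, but you do not need to rebuild it --- and in particular you do not need to worry about nuclei, cusp fibers, or keeping track of multiple double points. Choosing Whitehead doubles makes the problem a one-double-point problem that is already in the literature.

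\textbf{Step 4 (linear independence).} Here your plan has a genuine problem. The torus knots $T_{2,2n+1}$ you mention have nontrivial Alexander polynomial, so they fail your own step~1. The family you actually need must consist of topologically slice knots with $TB>0$; Whitehead doubles are the natural choice. But for Whitehead doubles the invariants you list are far too coarse: $\tau(Wh_0^+(K))\in\{0,1\}$, and similarly $\epsilon$, $\nu^+$ do not separate an infinite family linearly. The paper instead invokes Hedden--Kirk \cite{HK12}, who prove that $\{Wh_0^+(T_{2,2^n-1})\}_{n\ge 1}$ is linearly independent in the smooth concordance group using instanton gauge theory. This is not something you can replace by an off-the-shelf Heegaard Floer concordance homomorphism.

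In short: the paper's proof is the specific family $Wh_0^+(T_{2,2^n-1})$ together with two citations (\cite[Corollary~2.8]{manolescu} for smooth sliceness in $K3$, \cite{HK12} for linear independence), plus the same \cref{h-slice}/\cref{csum} argument you already have for the other two bullets.
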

\begin{rem}

Note that, in \cite[Corollary 1.7]{manolescu}, it is proved that there is a topologically H-slice knot in $K3$ but not smoothly H-slice in $K3$ by using the {\it $10/8$-theorem} \cite{manolescu, Fu01, HLSX18}. 
This is the first linearly independent infinite sequence of such examples.
\end{rem}

Our techniques can also be useful to obstruct sliceness of a given knot in a rational homology 4-ball $X$ with $\partial X = S^3$. 
Similar results have been proven before using Heegaard Floer theory \cite{P04}.

\begin{thm}\label{slice}
Let $K$ be a knot in $S^3$ with either $TB(K)>0$ or $TB(\overline{K})>0$, where $\overline{K}$ is the mirror image of $K$.  Then $K$ is not slice in any rational homology ball $W$ with boundary $S^3$.
\end{thm}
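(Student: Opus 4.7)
The plan is to derive a contradiction from \cref{main2} after constructing, out of the hypothetical slice disk and a Legendrian representative, a non-torsion smoothly embedded $2$-sphere of non-negative self-intersection in a compact $\Spinc$ $4$-manifold with contact boundary whose Iida invariant does not vanish.

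Suppose for contradiction that $K$ is slice in a rational homology $4$-ball $W$ with $\partial W = S^3$, via a smooth disk $D\subset W$, and $TB(K)>0$; the case $TB(\overline K)>0$ reduces to this by considering the mirror knot and replacing $W$ with $-W$, which is again a rational homology ball bounded by $S^3$. Pick a Legendrian representative $L$ of $K$ in $(S^3,\xi_{std})$ with $tb(L)=n>0$, smoothly isotope $K$ to $L$ so that $\partial D = L$, and attach a Weinstein $2$-handle $h_L$ to $W$ along $L$ with framing $n-1$. Denote the resulting compact $4$-manifold by $W':=W\cup h_L$; its boundary $(Y_L,\xi_L)$ is obtained by Legendrian surgery on $L$ and is Stein-filled by $V':=B^4\cup h_L$. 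Since $[D]\in H_2(W,\partial W;\Q)$ vanishes, $D$ induces the Seifert framing on $L$, so if $C$ denotes the core of $h_L$, then $\Sigma:=D\cup C$ is a smoothly embedded $2$-sphere in $W'$ with $[\Sigma]\cdot[\Sigma]=n-1\geq 0$; its class generates the rank-one $\Q$-summand of $H_2(W',\partial W';\Q)$ contributed by $h_L$, and is therefore non-torsion.

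A handle-diagram argument identifies $W'$ with the interior connected sum $V'\,\#\,\hat W$, where $\hat W:=W\cup_{S^3}B^4$ is a closed rational homology $4$-sphere. Take a further interior connected sum with a closed symplectic $4$-manifold $X$ satisfying $b_1(X)=0$ and $b_2^+(X)\equiv 3\pmod{4}$ (e.g., $X=K3$). Applying the connected sum formula \cite[Thm.~4.4]{I19} for Iida's invariant, together with the non-vanishing \cite[Cor.~4.3]{I19} for the Stein filling $V'$ (which gives $\Psi(V',\xi_L,\s_{V'})=\pm\mathrm{id}$), one obtains
\[
\Psi(W'\# X,\xi_L,\s)\;=\;\pm\, BF(\hat W,\s_0)\wedge BF(X,\s_\omega).
\]
By Bauer's theorem, $BF(X,\s_\omega)$ is non-$S^1$-null-homotopic. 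Combined with a gluing argument in the spirit of the proof of \cref{b=1'}, using that $S^3$ admits a positive scalar curvature metric so that $SWF(S^3,\s_0)\simeq S^0$, and the Iida--Taniguchi gluing theorem \cite[Thm.~1.2]{IT20} applied to $\hat W$ as a rational homology $4$-sphere, one concludes that $BF(\hat W,\s_0)$ acts as a unit under the smash product, and hence that $\Psi(W'\# X,\xi_L,\s)\neq 0$.

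The sphere $\Sigma$ persists inside $W'\# X$ with self-intersection $n-1\geq 0$ and non-torsion homology class, since the connected sum with $X$ only adjoins additional independent summands to $H_2$. Noting that $b_3(W'\# X)=0$, this configuration violates \cref{main2} applied to $(W'\# X,\xi_L,\s)$, yielding the desired contradiction. The main obstacle in the argument is establishing the unit property of $BF(\hat W,\s_0)$: this is the rational-homology-$4$-sphere analogue of the unit property for weak symplectic fillings exploited in \cref{b=1'}, and its proof appears to require a careful Bauer--Furuta analysis using both the positive scalar curvature on $S^3$ and the vanishing Dirac index for $\hat W$.
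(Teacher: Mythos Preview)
Your overall strategy is close to the paper's, but packaged differently: you glue the slice disk $D$ directly to the core of a Legendrian $2$-handle on $W$, whereas the paper embeds the $0$-trace $X_0(\overline K)$ into the capped-off $\hat W$ and then runs the handle-slide trick of \cref{main1} inside $X_1\#\hat W$, where $X_1=X_0(K)\cup h_\mu$ is the Stein filling obtained by adding a further Weinstein handle along a meridian of $K$. Both routes ultimately invoke \cref{main2} together with the non-vanishing result \cref{main0}.

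There is, however, a genuine gap in your argument when $TB(K)=1$. In that case you are forced to take $n=1$, hence framing $0$, so $[\Sigma]\cdot[\Sigma]=0$ and $\partial W'=S^3_0(K)$ has $b_1=1$. The capped Seifert surface $\hat F\subset\partial W'$ generates $H_2(\partial W';\Q)$ and is homologous in $W'$ to your sphere $\Sigma=D\cup C$ (their difference is a closed surface in the rational homology ball $W$, hence rationally trivial). It follows that $[\Sigma]$ lies in the image of $H_2(\partial W';\Q)\to H_2(W';\Q)$ and therefore dies in $H_2(W',\partial W';\Q)$; the same persists after connect-summing with $K3$. Thus the hypothesis ``non-torsion in $H_2(W,\partial W)$'' of \cref{main2} fails, and the contradiction does not go through. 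The paper's extra meridian $2$-handle is precisely what repairs this: the linking matrix $\left(\begin{smallmatrix}0&1\\1&m\end{smallmatrix}\right)$ has determinant $-1$, so $\partial X_1$ is an integer homology sphere and non-torsion in $H_2$ automatically gives non-torsion in relative $H_2$.

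By contrast, the step you single out as ``the main obstacle'' is not one. The closed manifold $\hat W$ is a (vacuously) negative definite $\Spinc$ $4$-manifold with $b_1=0$ and $d(\s_0)=\tfrac{1}{4}(c_1^2-2\chi-3\sigma)=-1$, so \cref{bauer} already gives $BF(\hat W,\s_0)=\pm1\in\pi^S_0$, a unit under smash. Equivalently, \cref{main0} applied to the Stein filling $V'$ and the negative definite piece $\hat W$ yields $\Psi(V'\#\hat W,\xi_L,\s)\neq0$ directly; the $K3$ summand is unnecessary. (A minor point: since $W$ is not symplectic, the $2$-handle you attach to $W$ is simply a smooth handle; the contact structure $\xi_L$ on the new boundary arises from Legendrian surgery in $(S^3,\xi_{std})$, independently of $W$.)
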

\begin{cor}\label{neg def inf}
There is an infinite family of knots $K_i, i\in \Z$ in $S^3$ that are linearly independent in the knot concordance group and that are topologically $H$-slice but not smoothly $H$-slice in any closed negative definite $4$-manifold $X$ with $b_1(X)=0$.
\end{cor}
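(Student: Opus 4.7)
The plan is to exhibit an infinite family $\{K_i\}_{i\in\Z}$ of knots satisfying three key properties and then to obstruct smooth $H$-sliceness via the Bauer--Furuta machinery of this paper. Each $K_i$ will be required to be (a) topologically slice in $B^{4}$ (equivalently, $\Delta_{K_i}(t)=1$), so that Freedman's theorem gives topological $H$-sliceness in every closed $4$-manifold, (b) equipped with a Legendrian representative of positive Thurston--Bennequin number, and (c) linearly independent in the smooth knot concordance group, detected via $\tau$, $\Upsilon$, or $d$-invariants of branched double covers. With a sign-controlled choice of family and using Etnyre--Honda's formula $tb(\Lambda_1\#\Lambda_2)=tb(\Lambda_1)+tb(\Lambda_2)+1$, every nontrivial $\Z$-linear combination $K=\#_i n_i K_i$ will inherit $TB(K)>0$ or $TB(\overline{K})>0$. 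Natural candidates are carefully chosen untwisted positive Whitehead doubles of iterated torus knots, together with their mirrors.

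Assume for contradiction that such a $K$ is smoothly $H$-slice in a closed negative definite $4$-manifold $X$ with $b_1(X)=0$, via a null-homologous smoothly embedded disk $D\subset X^{\circ}$. Equip $\partial X^{\circ}\cong S^{3}$ with its standard contact structure $\xi_{\mathrm{std}}$. Pick a closed symplectic $4$-manifold $Z$ with $b_{1}(Z)=0$ and $b_{2}^{+}(Z)\equiv 3\pmod 4$, and form the punctured connected sum $(X\#Z)^{\circ}$, a $4$-manifold with contact boundary $(S^{3},\xi_{\mathrm{std}})$ and $b_{3}=0$. By the connected-sum formula for $\Psi$ combined with \cref{b=1'}, one obtains $\Psi((X\#Z)^{\circ},\xi_{\mathrm{std}},\s)\neq 0$ for a suitable $\Spinc$ structure $\s$, since $X\#Z$ inherits the non-vanishing Bauer--Furuta invariant of $Z$.

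Now choose a Legendrian representative $\Lambda\subset(S^{3},\xi_{\mathrm{std}})$ of $K$ with $tb(\Lambda)>0$ (replacing $K$ by its mirror if necessary). Push $\Lambda$ into $Z^{\circ}$ through the symplectization end to obtain a surface $F\subset Z^{\circ}$ of controlled genus with $\partial F=\Lambda$, whose framing records $tb(\Lambda)$. Glue $F$ to the null-homologous slice disk $D\subset X^{\circ}\subset (X\#Z)^{\circ}$ along $K$ to obtain a closed surface $\Sigma\subset (X\#Z)^{\circ}$. A direct framing calculation yields $[\Sigma]\cdot[\Sigma]=tb(\Lambda)>0$, and $[\Sigma]$ is non-torsion in $H_{2}((X\#Z)^{\circ},\partial)$ because its $Z$-component pairs nontrivially with a basis class of $H_{2}(Z)$. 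Applying the higher-genus adjunction inequality \cref{main2'} to $\Sigma$ produces the desired contradiction, since positive self-intersection together with the constrained genus violates the inequality.

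The principal obstacle is executing the Legendrian push-off into $Z^{\circ}$ so that the resulting $\Sigma$ is smoothly embedded, has the claimed positive self-intersection, and represents a non-torsion homology class; this requires careful neck-stretching in the symplectic cap region of $Z^{\circ}$ and homological bookkeeping across the $X$ and $Z$ summands, drawing on the framework already developed in the paper for \cref{main2}. An alternative, purely topological reduction to the rational-ball setting of \cref{slice} would avoid the higher-genus adjunction altogether, but runs into the distinct difficulty of producing a smooth rational homology $4$-ball from $X^{\circ}\setminus\nu(D)$ by smoothly killing $H_{2}\cong H_{2}(X)$; Donaldson's diagonalization supplies only an algebraic basis of $(-1)$-classes, and smooth realization by embedded spheres is not automatic in exotic negative definite settings.
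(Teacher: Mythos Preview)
Your main line of attack has a genuine gap at step~4, and it is more than a technical obstacle. In $(X\#Z)^{\circ}$ the boundary $S^{3}$ containing $K$ sits on the $X$-side of the connected sum, so there is no submanifold ``$Z^{\circ}$'' whose boundary contains $K$ into which you could push a Legendrian. Even granting some reinterpretation, there is no general mechanism producing a bounded-genus surface $F$ in a symplectic \emph{cap} with $\partial F=\Lambda$ and self-intersection contribution $tb(\Lambda)$; the Legendrian/Lagrangian surface constructions you seem to have in mind live on the \emph{filling} side. Without $F$, the closed surface $\Sigma$ and the adjunction contradiction never materialize.

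The paper's argument avoids all of this with a one-line reduction. Take $K_{n}=Wh_{0}^{+}(T_{2,2^{n}-1})$: topologically slice, $TB(K_{n})>0$, and linearly independent by Hedden--Kirk. If such a $K$ were smoothly $H$-slice in a closed negative-definite $X$ with $b_{1}(X)=0$, then---exactly as in the proof of \cref{h-slice}---the $0$-trace $X_{0}(\overline{K})$ embeds in $X$. But $TB(K)>0$ makes $X_{0}(K)$ a Stein filling with a geometrically isolated $2$-handle, and \cref{main1} (whose list of targets explicitly includes a closed negative-definite $X$ with $b_{1}=0$) forbids any orientation-preserving embedding of $\overline{X_{0}(K)}=X_{0}(\overline{K})$ into such $X$. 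That is the entire obstruction; no auxiliary symplectic $Z$, no surface gluing, no higher-genus adjunction.

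Two side remarks. First, you do not need every nontrivial $\Z$-linear combination of the $K_{i}$ to have positive $TB$: the corollary only asserts that each $K_{i}$ individually fails to be smoothly $H$-slice, with linear independence handled separately by Hedden--Kirk. Your connect-sum $TB$ bookkeeping (and the inclusion of mirrors) is unnecessary. Second, your appeal to \cref{b=1'} is off-target: that result computes a \emph{relative} Bauer--Furuta invariant for a filling with positive-scalar-curvature boundary, not the non-vanishing of $\Psi$ for a punctured closed manifold. The non-vanishing you would actually need for $(X\#Z)^{\circ}$ comes from \cref{conn sum} and \cref{bauer}, after using Donaldson diagonalization to choose $\s_{X}$ on the negative-definite summand---but as noted, this entire detour is superfluous given the trace-embedding shortcut.
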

A stronger result is also proved in \cite{Sa18} for both positive and negative 4-manifolds. 

\subsection{Constraints on the topology of symplectic caps}
For a simply-connected closed 4-manifold, it is asked in \cite[Problem 4.18]{K78} whether there is a handle decomposition without 1- and 3-handles. A 4-manifold having such a handle decomposition is called a {\it geometrically simply connected 4-manifold}. We treat more general class of 4-manifolds equipped with {\it geometrically isolated 2-handles}.
 
\begin{defn}\label{good}
Let $X$ be a compact 4-manifold with connected boundary $Y$. We say $X$ has a  \textit{geometrically isolated 2-handle} if there exists a 2-handle $h$ in a handle decomposition of $X$ such that 
\begin{enumerate}
    \item 
    $h$ does not run over any 1-handles in that handle decomposition and 
    \item
    $h$ generates a non-trivial element in $H_2(X;\Z)/\operatorname{Tor}$. 
\end{enumerate}
\end{defn}

As an application of the relative Bauer-Furuta invariant and a similar discussion in the proof of \cref{main1}, we put some constraints on the topology of symplectic caps. As we discussed before, there are several known constraints on the topology of symplectic fillings of a contact 3-manifold $(Y, \xi)$, but not much is known about the topology of symplectic caps. 
For example, $-\Sigma(2,3,5)$ does not admit any symplectic filling \cite{Li98} but a result of Etnyre--Honda \cite{ehcap} guarantees that it has infinitely many distinct symplectic caps for each contact structure.

\begin{thm}\label{cap1}
The following results hold: 
\begin{itemize}
    \item[(i)] 
The contact 3-manifold $(S^3,\xi_{std})$ does not have any positive definite symplectic cap having a geometrically isolated 2-handle such that $b_1=0$ and $b^+_2\geq 2$.   
\item[(ii)]  The contact 3-manifold $\Sigma(2,3,5)$ with its unique tight contact structure does not have any positive definite symplectic cap having a geometrically isolated 2-handle such that $b_1=0$, $b^+_2\geq 2$, and there is no 2-torsion in its homology.
\end{itemize}
\end{thm}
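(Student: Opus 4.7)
The plan is to prove both parts by contradiction, applying \cref{main2} directly to the cap $X$ after establishing that the Iida invariant $\Psi(X,\xi,\s_\omega|_X)$ is non-vanishing, and then producing a smoothly embedded $2$-sphere of positive self-intersection from the geometrically isolated $2$-handle in order to contradict the adjunction inequality.

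For part (i), suppose a positive-definite symplectic cap $X$ of $(S^3,\xi_{std})$ with $b_1(X)=0$, $b_2^+(X)\geq 2$, and a geometrically isolated $2$-handle $h$ of framing $n>0$ exists. Gluing the standard symplectic $(B^4,\omega_{std})$ produces a closed symplectic 4-manifold $\widehat X := X\cup_{S^3}B^4$ that is positive-definite with $b_1(\widehat X)=0$ and $b_2^+(\widehat X)\geq 2$. Taubes's theorem yields $SW(\widehat X,\s_\omega)=\pm 1$, so $BF(\widehat X,\s_\omega)\neq 0$ as an $S^1$-equivariant stable homotopy class. On the other hand, $(B^4,\omega_{std})$ is a weak symplectic filling of $(S^3,\xi_{std})$ with $b_3(B^4)=0$, so by Iida's non-vanishing result \cite[Corollary~4.3]{I19}, $\Psi(B^4,\xi_{std},\s_0)=\pm\mathrm{id}$. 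Combining these via the gluing formula of \cite[Theorem~1.2]{IT20}, which relates $BF(\widehat X)$ to the pairing of $\Psi(X,\xi_{std},\s_\omega|_X)$ with $\Psi(B^4,\xi_{std},\s_0)$, the non-vanishing of $BF(\widehat X)$ forces $\Psi(X,\xi_{std},\s_\omega|_X)\neq 0$.

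With $b_3(X)=b_1(X)=0$ (by Poincar\'e--Lefschetz duality and connectedness of $\partial X=S^3$), \cref{main2} applies to $X$: no non-torsion class in $H_2(X,\partial X;\mathbb{Z})\cong H_2(X;\mathbb{Z})$ is represented by a smoothly embedded $2$-sphere of non-negative self-intersection. To derive a contradiction, I construct such a sphere from the geometrically isolated $2$-handle $h$. Since $h$ does not run over any $1$-handle, its attaching circle $K'$ lies in $\partial B^4_0\subset X$, and $[h]\in H_2(X)/\mathrm{Tor}$ satisfies $[h]^2=n>0$. Donaldson's diagonalization applied to the closed positive-definite $\widehat X$ gives a standard orthogonal basis $\{e_i\}$ with $[h]=\sum a_i e_i$ and $\sum a_i^2=n$; a sequence of Kirby handle slides (confined to the $2$-handles of $X$, using the no-$1$-handle property of $h$ to avoid disturbing the contact boundary) then replaces $h$ by a $2$-handle attached along an unknot in $\partial B^4_0$ with framing $n$. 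The core of this new handle together with a flat disk in $B^4_0$ yields the required smoothly embedded $2$-sphere $S\subset X$ with $[S]=[h]$ and $[S]^2=n>0$, contradicting \cref{main2}.

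The main obstacle is this Kirby-move sphere realization step: a geometrically isolated $2$-handle only represents its class by a surface of genus $g_4(K')$ in general, and reducing to an unknot-attached $2$-handle uses Donaldson's diagonalization of $\widehat X$ together with the no-$1$-handle hypothesis on $h$ in an essential way. Part (ii) follows by the same argument after replacing $(S^3,\xi_{std})$ by $\Sigma(2,3,5)$ with its unique tight contact structure, which is a PSC rational homology $3$-sphere (the quotient of the round $S^3$ by the binary icosahedral group). In this case $\widehat X=X\cup W$ (with $W$ a strong symplectic filling of $\Sigma(2,3,5)$) is no longer closed positive-definite, so Donaldson's theorem does not apply directly; the no-$2$-torsion hypothesis on $H_*(X;\mathbb{Z})$ is instead used to invoke Fr{\o}yshov-type bounds on the intersection form of $X$ alone (available because $\partial X$ is an integer homology $3$-sphere admitting a PSC metric), producing the standard orthogonal basis needed to complete the Kirby-move sphere construction.
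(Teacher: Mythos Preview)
Your approach has two genuine gaps. First, Iida's invariant $\Psi(X,\xi_{std},\s)$ is not defined when $X$ is a symplectic \emph{cap}: the construction in \cref{finite dim} requires the contact structure on $\partial X$ to be compatible with the boundary orientation (i.e.\ convex), but for a cap the oriented boundary is $-S^3$ and $\xi_{std}$ is concave there. The conical-end model $W^+=W\cup_Y(\R^{\geq 1}\times Y)$ and the canonical pair $(A_0,\Phi_0)$ go the wrong way, and the gluing result of \cite{IT20} you invoke does not produce an invariant of caps. Second, your sphere construction is unjustified: Donaldson diagonalization gives only algebraic information about the intersection form and does not let you perform handle slides turning the attaching circle of $h$ into an unknot. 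Already the simplest case shows this fails---if $h$ is the unique $2$-handle (so no slides are possible) attached along a knot $K$ with framing $+1$, the core union a Seifert surface has genus $g_4(K)$, not $0$, regardless of the intersection form being $(+1)$.

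The paper avoids both issues by working not with $X$ but with the $4$-manifold $(W\cup X)\#\overline{X}$, where $W$ is a symplectic filling of $(Y,\xi)$; this has boundary $Y$ (a PSC rational homology sphere, hence a homotopy L-space), so the \emph{relative Bauer--Furuta} invariant of \cref{main4'} applies rather than $\Psi$. Donaldson's theorem (resp.\ Scaduto's theorem for $\Sigma(2,3,5)$) is used only to choose a $\Spinc$ structure on $\overline{X}$ matching the Fr{\o}yshov invariant, making $BF(\overline{X},\s_X)$ an $S^1$-equivariant homotopy equivalence and hence $BF((W\cup X)\#\overline{X})\neq 0$. The sphere is then produced by the trick from the proof of \cref{main1}: inside $X\natural\overline{X}\subset (W\cup X)\#\overline{X}$ one slides $h$ over its mirror $\overline{h}$, and since $h$ avoids $1$-handles the new attaching circle bounds a disk disjoint from the core, yielding a $0$-self-intersection sphere without any need to unknot anything.
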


Note that $\C P^2 \setminus \operatorname{int} D^4$ gives a positive definite symplectic cap of $(S^3, \xi_{std})$, so the assumption $b_2^+(X)> 1$ is necessary. 

\begin{rem}
Notice that, since a geometrically simply-connected closed symplectic 4-manifold with $b^+_2 >1$ has a geometrically isolated 2-handle, after removing $(D^4, \om_{std})$, we get a positive definite symplectic cap to $(S^3,\xi_{std})$ which contradicts \cref{cap1}(i). Thus we can reprove that any positive definite geometrically simply connected closed 4-manifold with $b^+_2 >1$ does not admit a symplectic structure \cite[Theorem 1.1]{HL19}, \cite[Corollary 1.6]{Y19}. 
\end{rem}

We can generalize \cref{cap1} for any contact 3-manifold $(Y, \xi)$ having a metric with positive scalar curvature. 
\begin{thm}\label{cap}
Let $(Y, \xi)$ be a contact 3-manifold with a symplectic filling that has $b_1=0$. If  $b_1(Y)=0$ and $Y$ admits a positive scalar curvature metric, then $(Y,\xi)$ does not have positive definite symplectic cap $X$ with $b_1=0$ and $b^+_2\geq 2$ having a geometrically isolated 2-handle and a Spin$^c$ structure $\s_X$ satisfying
\[
\frac{-c^2_1(\s_X)+b_2(X)}{8}=\delta(Y, \s_\xi), 
\]
where $\delta(Y, \s)$ is the Fr\o yshov invariant of $(Y, \s)$, with the convention $\delta(\Sigma(2,3,5))=1$.
\end{thm}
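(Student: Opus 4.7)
The plan is to parallel the proof of \cref{cap1}(i), replacing the trivial ball filling of $(S^3,\xi_{std})$ by the given symplectic filling $(W,\omega_W)$ of $(Y,\xi)$ and using the Fr\o yshov matching hypothesis to supply the crucial dimensional coincidence. Suppose for contradiction that a positive definite symplectic cap $(X,\omega_X,\s_X)$ as in the statement exists. Because $Y$ is a rational homology $3$-sphere, Poincar\'e--Lefschetz duality forces $b_3(W)=0$. I would then form the closed Spin$^c$ $4$-manifold $(M,\s_M):=(W,\s_{\omega_W})\cup_Y(X,\s_X)$; the two Spin$^c$ structures glue because each restricts to $\s_\xi$ on $Y$, and Mayer--Vietoris together with $H_*(Y;\Q)=H_*(S^3;\Q)$ yields $b_1(M)=0$, $b_2^+(M)\geq b_2^+(X)\geq 2$, and the injection $H_2(X)/\mathrm{Tor}\hookrightarrow H_2(M)/\mathrm{Tor}$.

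Next I would establish that $BF(M,\s_M)$ is non-trivial as an $S^1$-equivariant stable homotopy class. Since $Y$ admits a positive scalar curvature metric, $(Y,\s_\xi)$ is a homotopy L-space and, up to suspension, $SWF(Y,\s_\xi)\simeq(\C^{\delta(Y,\s_\xi)})^+$. The gluing theorem of \cite[Theorem 1.2]{IT20} that underlies \cref{b=1'} factors $BF(M,\s_M)$ as the composition of $BF(W,\s_{\omega_W})$, which is $S^1$-equivariantly stably identified with the Floer homotopy contact invariant of $(Y,\xi)$ from \cite[Theorem 1.1]{IT20} (known to be non-null because $W$ is a symplectic filling), with the relative invariant $BF(X,\s_X)\colon SWF(-Y,\s_\xi)\to (\R^{-b_2^+(X)}\oplus\C^{(c_1^2(\s_X)-\sigma(X))/8})^+$. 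Positive definiteness gives $\sigma(X)=b_2(X)$, so the hypothesis $\frac{-c_1^2(\s_X)+b_2(X)}{8}=\delta(Y,\s_\xi)$ makes the source and target indices coincide; the composite then reduces to an $S^1$-equivariant self-map of $(\C^\delta)^+$ that the gluing analysis forces to be of identity degree, hence non-null.

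The geometrically isolated $2$-handle $h$ in $X$ supplies the final geometric input. Rearranging the handle decomposition so that $h$ is attached last among the $2$-handles (possible because $h$ does not run over any $1$-handle), its attaching circle $K$ lies in the boundary $S^3$ of the zero-handle $D^4\subset X$. Capping $K$ with a smoothly embedded surface inside $D^4$ and gluing on the core disk of $h$ produces $\Sigma\subset X\subset M$ with $[\Sigma]^2$ equal to the framing of $h$---necessarily positive because $X$ is positive definite---and with $[\Sigma]$ non-torsion in $H_2(X)/\mathrm{Tor}$, hence by the injectivity above non-torsion in $H_2(M)/\mathrm{Tor}$. Applying the adjunction inequality for closed $4$-manifolds with non-vanishing $S^1$-equivariant Bauer--Furuta invariant---the closed analog of \cref{main2} established by the same neck-stretching method of \cite{Fr05, KM94}---to $\Sigma\subset(M,\s_M)$ then contradicts $[\Sigma]^2>0$ once the slice-genus bound for $K$ in $D^4$ is inserted.

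I expect Step~2 to be the main obstacle: tracking the $S^1$-equivariant suspension shifts so that the Fr\o yshov equality $\frac{-c_1^2(\s_X)+b_2(X)}{8}=\delta(Y,\s_\xi)$ genuinely forces the composite of $BF(X,\s_X)$ with the Floer homotopy contact invariant of $(Y,\xi)$ to have identity degree on $(\C^\delta)^+$ rather than a null-homotopic class. This requires the refined form of the gluing theorem of \cite{IT20} in the homotopy L-space regime, essentially the same delicate computation that already drives \cref{b=1'}.
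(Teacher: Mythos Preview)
Your intuition that Step~2 is the crux is correct, but the difficulty is more than suspension bookkeeping: it is a genuine obstruction. The Fr\o yshov hypothesis matches only the \emph{complex} indices. Since $X$ is positive definite with $b_2^+(X)\geq 2$, the relative invariant $BF(X,\s_X)$ is a map $(\R^{-b_2^+(X)}\oplus\C^{-\delta})^+\to SWF(-Y,\s_\xi)\simeq(\C^{-\delta})^+$, and the $\R^{-b_2^+(X)}$ factor does not go away. On $S^1$-fixed points this becomes, after stabilisation, the inclusion $S^0\to S^{b_2^+(X)}$, which is null; so the argument of \cite[Lemma~3.8]{BF04} cannot be invoked to show $BF(X,\s_X)$ is an equivalence, and the glued invariant $BF(M,\s_M)$ has no reason to be non-null. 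Note too that $\s_X$ is not assumed to be the canonical Spin$^c$ structure of the cap, so $\s_M$ need not equal the symplectic Spin$^c$ structure on $M=W\cup X$, and the usual non-vanishing theorem for closed symplectic manifolds is unavailable as well.

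The paper circumvents both issues by working on $N:=(W\cup X)\#\overline{X}$, a $4$-manifold with boundary $Y$, equipped with $\s_\Omega\#\s_X$, where $\s_\Omega$ is the symplectic Spin$^c$ structure on the closed symplectic manifold $W\cup X$. Non-vanishing of the relative invariant of $N$ follows from the connected-sum gluing formula because (i) $BF(W\cup X,\s_\Omega)\neq 0$ since $W\cup X$ is closed symplectic with $b_2^+\geq 2$, and (ii) $\overline{X}$ is \emph{negative} definite, so $b_2^+(\overline{X})=0$; the Fr\o yshov condition now makes $BF(\overline{X},\s_X)\colon(\C^\delta)^+\to SWF(Y,\s_\xi)=(\C^\delta)^+$ a genuine self-map whose $S^1$-fixed part is a homotopy equivalence, and \cite[Lemma~3.8]{BF04} upgrades this to an $S^1$-equivariant equivalence. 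For the vanishing side, having both $X$ and $\overline{X}$ inside $N$ allows the handle-slide of $h$ over its mirror $\overline h$ exactly as in the proof of \cref{main1}, producing a $2$-sphere of self-intersection $0$, so \cref{main4'}(ii) applies directly. Your surface $\Sigma$ in Step~3, by contrast, has genus equal to the $4$-genus of the attaching knot of $h$, over which you have no control, so the adjunction inequality would not force a contradiction even if Step~2 had succeeded.
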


Given this evidence, we propose the following conjecture.

\begin{conjecture}
A contact 3-manifold $(Y,\xi)$, where $Y$ is an $L$-space and $\xi$ is symplectically fillable, does not have a simply-connected positive definite symplectic cap with $b_2^+ >1$.
\end{conjecture}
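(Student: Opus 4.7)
My plan is to argue by contradiction: assume $(Y,\xi)$ is as in the conjecture and nonetheless admits a simply-connected, positive definite symplectic cap $(X,\omega_X)$ with $b_2^+(X) > 1$. Because $\xi$ is symplectically fillable, choose a weak symplectic filling $(W,\omega_W)$ of $(Y,\xi)$, arranging $b_3(W) = 0$ if necessary by surgering out $3$-handles. Gluing produces the closed symplectic $4$-manifold $Z = W \cup_Y X$ with $b_2^+(Z) \geq b_2^+(X) \geq 2$, so Taubes' theorem yields $SW(Z,\s_\omega) \neq 0$ and consequently $BF(Z,\s_\omega) \neq 0$. Since $Y$ is an $L$-space (interpreted in the Floer-homotopy sense), $SWF(-Y,\s_\xi)$ is equivalent, up to formal suspension, to $(\C^\delta)^+$, and since $\xi$ is fillable the Floer-homotopy contact invariant of \cite{IT20} is non-trivial. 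Invoking the gluing formula for the relative Bauer--Furuta invariant (Theorem 1.2 of \cite{IT20}) then forces the cap invariant $BF(X,\s_{\omega_X}) : (\R^{-b_2^+(X)} \oplus \C^d)^+ \to SWF(-Y,\s_\xi)$ to be non-trivial as an $S^1$-equivariant stable map.

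The second step is to extend the neck-stretching argument behind \cref{main2}---originally written for the Iida invariant $\Psi(W,\xi,\s)$ of a filling---to the relative Bauer--Furuta invariant of a cap bounded by an $L$-space. The goal is the adjunction-type statement: if $BF(X,\s_X) \neq 0$ and a non-torsion class $\alpha \in H_2(X;\Z)$ is represented by a smoothly embedded $2$-sphere with $\alpha \cdot \alpha \geq 0$, then $BF(X,\s_X) = 0$. The $L$-space hypothesis is crucial: it makes the target $SWF(-Y,\s_\xi) \simeq (\C^\delta)^+$ behave essentially like a sphere under finite-dimensional approximation, so the compactness-breakdown produced by stretching a neighborhood of the sphere propagates through the approximation in the same way as in the closed case. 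Combined with Step 1, this rules out any embedded $2$-sphere representative of a non-torsion class, since positive definiteness forces $\alpha \cdot \alpha > 0$ for every non-torsion $\alpha$.

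The main obstacle is therefore to produce at least one non-torsion class in $H_2(X;\Z)$ that is actually represented by a smoothly embedded $2$-sphere, which will collide with the conclusion of Step 2. Simple-connectivity together with positive definiteness and $b_2^+ > 1$ is not known to supply such a sphere directly. A first attack is to exploit the symplectic structure on $X$: an $\omega_X$-tamed almost complex structure together with Gromov compactness and positivity of intersections should, in the spirit of McDuff's classification of symplectic rational surfaces, yield a pseudoholomorphic sphere representing a generator, which after standard smoothing gives an embedded sphere of positive self-intersection. An alternative route is to upgrade the Step 2 adjunction to embedded surfaces of arbitrary genus and combine it with Fr\o yshov-type restrictions on the intersection form of $4$-manifolds bounded by $L$-spaces, forcing minimal-genus lower bounds for the generators that eventually contradict topological representability in a simply-connected $4$-manifold. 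Making either input rigorous---bridging the gauge-theoretic adjunction and the existence of the needed embedded representative---is the crux of the conjecture, and the reason the statement is left open here.
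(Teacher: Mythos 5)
This statement is presented in the paper as a \emph{conjecture}, not a theorem: the paper does not prove it, and your proposal, to its credit, is explicit at the end that it does not close the argument either. The evidence that the paper offers for the conjecture consists of \cref{cap1} and \cref{cap}, both of which carry two additional hypotheses that the conjecture drops: a geometrically isolated $2$-handle on the cap (used to manufacture an embedded self-intersection $0$ sphere by the handle-slide trick in the proof of \cref{main1}), and, in \cref{cap}, a ``Fr\o yshov compatibility'' condition on the $\Spinc$ structure guaranteeing that $BF(\overline{X},\s_X)$ has equal domain and codomain dimension. Your Steps~1 and~2 reconstruct the gauge-theoretic machinery in the paper (\cref{b=1}, \cref{main4'}), and you correctly identify that the real obstruction is Step~3, producing a non-torsion class represented by an embedded sphere from simple-connectivity alone. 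That is precisely the reason the paper states this as a conjecture rather than a theorem.

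Two points in your outline would need strengthening even before reaching Step~3. First, being an $L$-space (in the Heegaard Floer or monopole sense) is not the same as being a ``homotopy $L$-space'' $SWF(Y,\s)\simeq (\C^\delta)^+$; the paper only asserts the implication in the other direction, and \cref{hom1}, \cref{b=1} and \cref{main4'} all require the homotopy $L$-space condition. So to run the gluing formula and the cap adjunction as you propose, you would need to justify that $(Y,\s_\xi)$ is a homotopy $L$-space, which for a general $L$-space is not known. (The paper sidesteps this by assuming positive scalar curvature.) Second, in Step~1 you want $BF(W,\s_{\omega_W})$ to be invertible so that $BF(Z)\neq 0$ forces $BF(X)\neq 0$; this is exactly what \cref{hom1} gives, but again it uses the homotopy $L$-space hypothesis, and for a weak filling of a non-PSC $L$-space you would also need the Ohta--Ono / Eliashberg modification to a strong filling before gluing. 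Finally, on your two candidate routes to an embedded sphere: the pseudoholomorphic-sphere route does not apply as stated, since McDuff's theory gives embedded symplectic spheres only when one already knows there is a homology class represented by a symplectic sphere of non-negative square, which is not furnished here by positivity of the intersection form; and the Fr\o yshov-restriction route is exactly the sort of input the paper's \cref{cap} assumes rather than derives. So the gap you name is genuine and is the same gap the paper acknowledges by phrasing the statement as a conjecture.
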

Some of the techniques that have been used in the proofs of the results above can be further generalized to study codimension-0 embeddings of manifolds into closed symplectic 4-manifolds and their connected sums (see \cref{main1} and \cref{b=1appl} for the precise statements). 


 
\subsection{Study of exotic 4-manifolds with boundary} 
We now explore another direction of applications namely, the understanding exotic structures on 4-manifolds with connected boundary. 
Recently, the study of exotic structures on 4-manifolds with connected boundary has gotten a lot of attention. One interesting such problem is whether all 3-manifolds bound a 4-manifold that admits exotic structures. This problem has been studied by Yasui \cite{yasui} and independently Etnyre, Min and the second author~\cite{etnyre}, using versions of Seiberg--Witten theory and Heegaard Floer theory. The techniques we have developed in this paper can also be useful to study such exotic behaviour. 
\begin{thm}\label{exotic}
Let $Y$ be an oriented, connected, closed 3-manifold such that either $Y$ or $\overline{Y}$ admits a contact structure that has a weak filling with $b_3=0$. Then there exists a pair of compact oriented smooth 4-manifolds $X$ and $X'$ with boundary $Y$ such that $X$ and $X'$ are homeomorphic but not diffeomorphic.  
\end{thm}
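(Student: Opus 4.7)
The plan is to realize the exotic pair as connect-sums of the weak symplectic filling $W$ with two homeomorphic-but-not-diffeomorphic closed simply-connected $4$-manifolds whose Bauer--Furuta invariants have different collections of non-vanishing Spin$^c$-structures, and to detect the smooth difference by combining the connected sum formula for the Iida invariant $\Psi$ with the non-vanishing on weak fillings.

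After possibly reversing orientations at the end, I may assume $Y$ itself bounds a weak symplectic filling $(W,\omega)$ with $b_3(W)=0$. Fix a fibered knot $K\subset S^3$ with non-trivial Alexander polynomial, for instance the right-handed trefoil, and let $E(2)_K$ denote the Fintushel--Stern knot surgery on a regular torus fiber of $E(2)\cong K3$. Then $E(2)_K$ is a closed, simply-connected, symplectic $4$-manifold with $b_1=0$, $b_2^+=3\equiv 3\pmod{4}$, shares the intersection form of $E(2)$, but has strictly richer Seiberg--Witten polynomial $\Delta_K(t^2)$, supported at basic classes $0,\pm 2[F]$ rather than $\{0\}$. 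Set
\[ X:=W\#E(2),\qquad X':=W\#E(2)_K, \]
both with boundary $Y$. Freedman's theorem applied to the punctured simply-connected closed $4$-manifolds $E(2)^{\circ}$ and $E(2)_K^{\circ}$ yields a homeomorphism rel boundary $S^3$, which extends by the identity over $W\setminus\mathring B^4$ to produce the homeomorphism $X\cong X'$.

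For the smooth obstruction, the connected sum formula \cite[Theorem~4.4]{I19} together with the non-vanishing $\Psi(W,\xi,\s_W)=\pm 1$ from \cite[Corollary~4.3]{I19} gives
\[ \Psi(X,\xi,\s_W\#\s)=\pm BF(E(2),\s),\qquad \Psi(X',\xi,\s_W\#\s')=\pm BF(E(2)_K,\s'). \]
Since the Bauer--Furuta invariants refine the Seiberg--Witten polynomials, at Spin$^c$-structures on $E(2)_K$ with $c_1=\pm 2[F]$ one has $BF(E(2)_K,\s')\neq 0$, while $BF(E(2),\cdot)=0$ at the corresponding structures. Thus the subsets of Spin$^c$-structures on which $\Psi(\cdot,\xi,\cdot)$ is non-vanishing have different cardinalities for $X$ and $X'$, ruling out any $\xi$-preserving diffeomorphism. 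The main obstacle will be upgrading this to preclude arbitrary diffeomorphisms that may move $\xi$; the cleanest resolution is to transport the natural decomposition of $X$ via a hypothetical $\phi\colon X\to X'$, obtaining $X'=\phi(W)\#\phi(E(2))$ with $\phi(W)$ a weak filling of $(Y,\phi_*\xi)$ and $\phi(E(2))$ smoothly diffeomorphic to $E(2)$, and then applying the connected sum formula to this alternative decomposition and comparing with the original $X'=W\#E(2)_K$ to force $BF(E(2))\simeq\pm BF(E(2)_K)$, the desired contradiction.
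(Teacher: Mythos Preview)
Your proposed resolution has a genuine gap that you flag but do not actually close. Transporting the decomposition by $\phi$ gives you $X'=\phi(W)\#\phi(E(2))$ with $\phi(W)$ a weak filling of $(Y,\phi_*\xi)$; the connected sum formula then computes $\Psi(X',\phi_*\xi,\cdot)$ in terms of $BF(E(2),\cdot)$. But the information you have from the original decomposition $X'=W\#E(2)_K$ computes $\Psi(X',\xi,\cdot)$ in terms of $BF(E(2)_K,\cdot)$. These live over \emph{different} boundary contact structures $\xi$ and $\phi_*\xi$, and there is no mechanism in the paper (or elsewhere) that lets you compare $\Psi(X',\xi,\cdot)$ with $\Psi(X',\phi_*\xi,\cdot)$. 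So you cannot force $BF(E(2))\simeq\pm BF(E(2)_K)$ from this. In effect, counting ``basic classes of $\Psi$'' is an invariant of the pair $(X',\xi)$, not of $X'$ alone, and that is exactly why the argument stalls.

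The paper sidesteps this problem by making one of the two manifolds have $\Psi\equiv 0$ for \emph{every} contact structure on the boundary and every $\Spinc$ structure, via the adjunction inequality rather than basic-class counting. Concretely it takes
\[
X=W\#K3\#\overline{\C\bP^2},\qquad X'=W\#3\C\bP^2\#20\overline{\C\bP^2},
\]
which are homeomorphic by Freedman. For $X$ one has $\Psi(X,\xi,\s)\neq 0$ for suitable $\s$ by \cref{main0}. For $X'$ the class $\alpha+\overline{\alpha}\in H_2(\C\bP^2\#\overline{\C\bP^2};\Z)$ is represented by a smooth $2$-sphere of self-intersection $0$ and is non-torsion in $H_2(X';\Z)$; by \cref{main2} this forces $\Psi(X',\xi',\s')=0$ for \emph{all} $(\xi',\s')$, since the sphere is a feature of the smooth structure and does not see the boundary contact structure. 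Now diffeomorphism invariance of $\Psi$ gives the contradiction directly, with no need to track what happens to $\xi$ under $\phi$. If you want to salvage your knot-surgery idea, you would need to replace $E(2)_K$ by a homeomorphic model that contains such a sphere---but a symplectic $E(2)_K$ never will, which is why the paper's non-symplectic model $3\C\bP^2\#20\overline{\C\bP^2}$ is the right choice.
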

The result was first proven by Yasui~\cite{yasui} and later independently by Etnyre, Min and the second author~\cite{etnyre}. Using the first author's invariant, we are presenting different proof for the case $b_3=0$ (although the original results do not require $b_3=0$). Our examples are different from the earlier ones \cite{etnyre, yasui} in a sense that the invariants developed in \cite{etnyre} and \cite{yasui} vanish on these examples. So those earlier techniques cannot detect the exotic behaviour of such 4-manifolds with boundary. Also, note that, in \cite{etnyre} and \cite{yasui}, infinitely many exotic 4-manifolds are constructed for a given boundary 3-manifold.

The structure of paper is as follows: in \cref{Pre}, we review several notions in contact geometry, symplectic geometry and Bauer--Furuta type refinement of Kronheimer--Mrowka's invariant that are used in the proof of our main theorems. In \cref{Adjunction}, following \cite{Fr05} and \cite{KM94} we give a proof of \cref{main2}: an adjunction type inequality for the first author's invariant \cite{I19}. In \cref{Adjunction2}, we prove an adjunction inequality for relative Bauer--Furuta invariant of 4-manifolds with homotopy L-space boundary. We follow the methods given in \cite{Fr05} and \cite{KM94} to prove such an adjunction inequality. In the same section, we also prove \cref{b=1}. 
Finally in \cref{Application}, we give proofs all the applications including \cref{h-slice}, \cref{top h}, \cref{main1}, \cref{b=1appl} and \cref{exotic}.

\begin{acknowledgement}
The authors are grateful to Ciprian Manolescu whose talk in the Regensburg low-dimensional geometry and topology seminar inspired this project. The authors would like to express their appreciation to Mikio Furuta for answering their questions on the relative Bauer--Furuta invariants.
The authors would like to express their deep gratitude to John Etnyre, Marco Golla, Maggie Miller and Marco Marengon 
for their kind and generous offer improving  this paper.
The authors also wish to thank Hokuto Konno, Ciprian Manolescu, Kouki Sato, O\u{g}uz \c{S}avk, Yuichi Yamada, Kouichi Yasui for giving many helpful comments on the drafts of the paper.  
The authors also thank Francesco Lin, Lisa Piccirillo and Ian Zemke for showing interest in this project. We are also grateful to anonymous referees for their helpful comments.  

The first author was supported by JSPS KAKENHI Grant Number 19J23048 and the Program for Leading Graduate Schools, MEXT, Japan. The second author was partially supported by NSF grant DMS-1906414.
The third author was supported by JSPS KAKENHI Grant Number 17H06461 and 20K22319 and RIKEN iTHEMS Program.
\end{acknowledgement}

\section{Preliminaries} \label{Pre}

\subsection{Contact Geometry}\label{contact}

Recall that a \textit{(co-orientable) contact  structure} $\xi$ on an oriented 3-manifold $Y$ is the kernel of 1-form $\theta \in \Omega ^1(Y)$ such that $\theta \wedge d\theta$ is positive. Darboux's theorem says that every contact 3-manifold $(Y,\xi)$ is locally contactomorphic to the standard contact structure on $\R^3$
\[
(\R ^3,\xi_{std}= \ker(dz - ydx)).
\] 
All orientable 3-manifolds admit contact structures. A knot $K \subset (Y,\xi)$ is called \textit{Legendrian} if at every point of $K$, the tangent line to $L$ lies in the contact plane at that point. A Legendrian knot $K$ in a contact manifold $(Y,\xi)$ has a standard neighborhood $N$ and a framing $fr_\xi$ given by the contact planes. If $K$ is null-homologous, then $fr_\xi$ relative to the Seifert framing is the \textit{Thurston--Bennequin} invariant of $K$, which is  denoted by $tb(K)$.  If one does $fr_\xi-1$-surgery on $K$ by removing $N$ and gluing back a solid torus so as to effect the desired surgery, then there is a unique way to extend $\xi|_{Y-N}$ over the surgery torus so that it is tight on the surgery torus. The resulting contact manifold is said to be obtained from $(Y,\xi)$ by \textit{Legendrian surgery} on $K$. 
  
  A Legendrian knot $K$ in $(\R^3, \xi_{std})$ projects to a closed curve $\gamma$ in the $xz$--plane, which is known as a \textit{front projection} of $K$. The curve $\gamma$ uniquely determines the Legendrian knot $K$ which can be reconstructed by setting $y(t)$ as the slope of $\gamma (t)$. Thus, at a crossing, the most negative slope curves always stay at front. There are two types of possible cusp singularities corresponding to points where $dz/dx =0$ which are called {\it left cusps} and {\it right cusps}. From the front diagram, we can compute the Thurston--Bennequin invariant by the formula 
 \[
 tb(K)= \operatorname{Writhe}(K)- \#\{\text{Left cusps}\}.
 \]
 For a knot $K$ in $\R^3$, we define the \textit{maximum Thurston--Bennequin number} of $K$ by
  \[
  TB(K)=\  \max\set {tb(K') | K'\text{:Legendrian representation of} \ K\ \text{ in }  (\R^3,\xi_{std}) } .
  \]
  
  Since $\R^3$ (or $S^3$) has the unique tight contact structure up to isotopy proven by Eliashberg \cite{utight}, $TB(K)$ is an invariant of smooth knot type.
\subsection{Symplectic Geometry}\label{symplectic}  Let $(Y, \xi)$ be a closed oriented 3-manifold equipped with a contact structure.
  We recall that a compact symplectic manifold $(X,\omega)$ is a \textit{strong  symplectic  filling} of $(Y,\xi)$ if $\partial X=Y$ and there is a vector field $v$ defined near $\partial X$ such that the Lie derivative of $\omega$ satisfies $\mathcal{L}_v \omega= \omega$, $v$ points out of $X$ and $\iota_v\omega$ is a contact form for $\xi$. Moreover, $(X,\omega)$ is a {\it strong symplectic cap} for $(Y,\xi)$ if it satisfies all the properties above, except $\partial X=-Y$ and $v$ points into $X$. We also say $(X,\omega)$ is a \textit{weak symplectic filling} of $(Y,\xi)$ if $\partial X=Y$ and $\omega|_\xi>0$  (here all our contact structures are co-oriented). Similarly, $(X,\omega)$ is a {\it weak symplectic cap} of $(Y,\xi)$ if $\partial X=-Y$ and $\omega|_\xi>0$. We shall say that $(Y,\xi)$ is (strongly or weakly) \textit{semi-fillable} if there is a connected (strong or weak) filling $(X,\omega)$ whose one boundary component is $(Y, \xi)$. 
  $(Y,\xi)$ is (strongly or weakly) \textit{fillable} if there is a connected (strong or weak) filling of it.
  
  While symplectic fillings do not necessarily exist for a contact 3-manifold,  Etnyre and Honda \cite{ehcap} showed that there always exist symplectic caps for any contact 3-manifolds.
  This in particular implies that semi-fillability is equivalent to fillablity.

A \textit{symplectic  cobordism} from the contact manifold $(Y_-,\xi_-)$ to $(Y_+,\xi_+)$ is a compact symplectic manifold $(W,\omega)$ with boundary $-Y_-\cup Y_+$ where $Y_-$ is a \textit{concave} boundary component and $Y_+$ is \textit{convex}, this means that there is a vector field $v$ near $\partial W$ which points transversally inwards at $Y_-$ and transversally outwards at $Y_+$, $\mathcal{L}_v \omega= \omega$ and $\iota_v \om |_{Y_\pm}$ is a contact form of $\xi_\pm $. The first result we will need concerns when symplectic cobordisms can be glued together. If we have two symplectic cobordisms $(W_1, \om_1)$ and $(W_2, \om_2)$ with $\partial W_1 = -Y_0\cup Y_1$ and $\partial W_2 = -Y_1\cup Y_2$,
we can glue the symplectic forms so that the glued manifold $W:= W_1 \cup_{Y_1} W_2$ a structure of symplectic cobordism with concave component $Y_0$ and convex component $Y_2$.

Another way to build cobordisms is by {\it Weinstein handle attachment} \cite{ Weinstein91}. One may attach a 0, 1, or 2-handle to the convex end of a symplectic cobordism to get a new symplectic cobordism with the new convex end described as follows. For a 0-handle attachment, one merely forms the disjoint union with a standard 4--ball and so the new convex boundary will be the old boundary disjoint union with the standard contact structure on $S^3$. For a 1-handle attachment, the convex boundary undergoes, possibly internal, a connected sum. A 2-handle is attached along a Legendrian knot $L$ with framing one less that the contact framing, and the convex boundary undergoes a Legendrian surgery. 

\begin{thm}\label{cob}
Given a contact 3-manifold $(Y,\xi=\ker \theta)$ let $W$ be a part of its symplectization, that is $(W= [0,1]\times Y, \omega= d(e^t\theta))$. Let $L$ be a Legendrian knot in $(Y,\xi)$ where we think of $Y$ as $Y\times \{ 1 \}$. If $W'$ is obtained from $W$ by attaching a 2-handle along $L$ with framing one less than the contact framing, then the upper boundary $(Y', \xi ')$ is still a convex boundary. Moreover, if the 2-handle is attached to a strong symplectic filling (respectively weak filling) of $(Y,\xi)$ then the resultant manifold would be a strong symplectic filling (respectively weak filling) of $(Y'\xi ')$.
\end{thm}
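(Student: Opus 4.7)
The plan is to follow Weinstein's construction: build a symplectic model of a $2$-handle carrying a Liouville vector field with a single index-$2$ hyperbolic zero, identify a tubular neighborhood of its attaching Legendrian circle with a neighborhood of $L$ using the Legendrian neighborhood theorem, and glue. The Liouville vector fields on $W$ and on the handle will match on the overlap to produce a global Liouville field on $W'$ pointing outward at the new upper boundary, thereby producing a convex contact boundary; a local framing computation identifies this boundary with the contact manifold obtained from $(Y,\xi)$ by Legendrian surgery on $L$.

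First I would work in the local model $(\R^4,\omega_0=dx_1\wedge dy_1+dx_2\wedge dy_2)$ with the Liouville vector field
\[
Z := -x_1\partial_{x_1}+2y_1\partial_{y_1}-x_2\partial_{x_2}+2y_2\partial_{y_2}
\]
and Morse function $\phi := \tfrac12(-|x|^2+2|y|^2)$; then $\mathcal L_Z\omega_0=\omega_0$ and $Z$ has an index-$2$ hyperbolic zero at the origin with stable disk $\{y=0\}$ and unstable disk $\{x=0\}$. The Liouville $1$-form $\lambda_0:=\iota_Z\omega_0$ restricts to a contact form on every regular level set of $\phi$. Set $\Sigma_{-\epsilon}:=\{\phi=-\epsilon\}$ and let $A:=\Sigma_{-\epsilon}\cap\{y=0\}$ be the attaching circle; then $A$ is Legendrian since $\lambda_0|_A=0$, and the $2$-handle $H:=\{|x|\le r\}\cap\{\phi\le \epsilon'\}$ has core equal to the stable disk. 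A direct computation compares the product framing $\partial_{y_1}$ along $A$ with a unit vector spanning the contact plane modulo $TA$ (at $p=(\sqrt{2\epsilon}\cos\theta,\sqrt{2\epsilon}\sin\theta,0,0)$ this vector is a multiple of $-x_2\partial_{y_1}+x_1\partial_{y_2}$); the two framings differ by one full twist as $\theta$ traverses $[0,2\pi]$, so the handle framing equals $tb(A)-1$. This framing comparison is the technical heart of the construction.

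To glue the handle, I would invoke the Legendrian neighborhood theorem to obtain a contactomorphism between a tubular neighborhood of $L$ in $(Y,\xi)$ and a tubular neighborhood of $A$ in $(\Sigma_{-\epsilon},\ker\lambda_0)$. Extending this contactomorphism through the Liouville flows of $\partial_t$ on $W=[0,1]\times Y$ and of $Z$ on $H$ produces a symplectomorphism of open subsets, and gluing $W$ to $H$ along this symplectomorphism yields the desired $4$-manifold $W'$. The Liouville vector fields $\partial_t$ and $Z$ agree on the overlap and assemble into a global Liouville field $\widetilde Z$ on $W'$ that points outward along the new upper boundary $Y':=(\{\phi=\epsilon'\}\cap H)\cup((Y\setminus\mathrm{nbd}(L))\times\{1\})$. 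Hence $Y'$ is convex, and the induced contact structure $\xi':=\ker(\iota_{\widetilde Z}\omega|_{Y'})$ is, by the framing computation, precisely the Legendrian surgery of $(Y,\xi)$ along $L$.

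Finally, to extend to the filling cases: for a strong filling $(X,\omega)$ the defining Liouville field $v$ near $\partial X$ identifies a collar of $\partial X$ with a piece of the symplectization of $(Y,\xi)$, and the same gluing applies verbatim. The primary obstacle, and the step requiring the most care, is the weak filling case, in which no Liouville field need exist near $\partial X$. The standard remedy is to attach an auxiliary collar $[0,\delta]\times Y$ and deform the symplectic form to $\omega+d(s\theta)$ for an appropriate interpolation, using the weak-filling hypothesis $\omega|_\xi>0$ to guarantee that this extension remains nondegenerate on the collar; one then reduces to the strong case. At the end one must verify that $\omega|_{\xi'}>0$ on the new boundary, which follows by tracking the openness of the condition $\omega|_\xi>0$ through the handle attachment.
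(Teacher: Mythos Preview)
The paper does not give its own proof of this theorem; immediately after the statement it simply records that the result is due to Eliashberg (Stein case), Weinstein (strong fillings), and Etnyre--Honda (weak fillings), and cites those papers. So there is no ``paper's proof'' to compare against.

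Your proposal is a faithful outline of Weinstein's original argument: the explicit model handle with Liouville field $Z$ and Morse function $\phi$, the Legendrian neighborhood theorem to match a collar of the attaching circle with a collar of $L$, the framing computation yielding $tb-1$, and the observation that the Liouville fields glue. The weak-filling extension you sketch (add a collar and interpolate the symplectic form using $\omega|_\xi>0$) is likewise the Etnyre--Honda mechanism. One small point worth tightening if you flesh this out: the model handle as written, $H=\{|x|\le r\}\cap\{\phi\le\epsilon'\}$, has corners and its ``lower'' boundary $\{|x|=r\}$ is not a level set of $\phi$, so $Z$ is not automatically transverse there; Weinstein's paper smooths the corners and chooses the attaching region more carefully so that the Liouville field is honestly inward-pointing on the entire lower boundary hypersurface. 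This is a routine but necessary step for the gluing to go through cleanly.
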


The theorem for Stein fillings was proven by Eliashberg \cite{yasha91}, for strong fillings by Weinstein \cite{Weinstein91}, and was first stated for weak fillings by Etnyre and Honda \cite{eh02}.

\subsection{Bauer--Furuta version of Kronheimer--Mrowka's invariant for 4-manifolds with contact boundary}\label{finite dim}
In this subsection, we summarize the definition of the Bauer--Furuta version of Kronheimer--Mrowka's invariant 
 $\Psi(W, \xi , \s_W)$
 introduced by the first author \cite{I19}. The geometric setting is the same as that given in \cite{KM97} except for the condition $b_3(W) = \dim H^1(W, \partial W; \R)=0$. 
\par
Let $W$ be a compact oriented 4-manifold with nonempty boundary.
We assume $H^1(W, \partial W; \R)=0$, in particular, $Y=\partial W$ is connected. 
Let $\xi$ be a contact structure on $Y=\partial W$ compatible with the boundary orientation.

Pick a contact 1-form $\theta$ on $Y$ and a complex structure $J$ of $\xi$ compatible with the orientation.
There is now an unique Riemannian metric $g_1$ on $Y$ such that $\theta$ satisfies that $|\theta|=1$, $d\theta=2*\theta$, and $J$ is an isometry for $g|_\xi$, where $*$ is the Hodge star operator with respect to $g_1$.
Define a symplectic form $\omega_0$ on $\R^{\geq 1}\times Y$ by the formula
$\omega_0=\frac{1}{2}d(s^2\theta)$, where $s$ is the coordinate of $\R^{\geq 1}$.
We define a conical metric on $\R^{\geq 1} \times Y$ by 
\begin{align}\label{conical metric} 
g_0 := ds^2 + s^2 g_1. 
 \end{align}

On $\R^{\geq 1} \times Y$, we have a canonical $\Spinc$ structure $\s_0$, a canonical $\Spinc$ connection $A_0$, a canonical positive Spinor $\Phi_0$.
These are given as follows.
The pair $(g_0, \om_0)$ determines an almost complex structure $J$ on $\R^{\geq 1} \times Y$. This defines a $\Spinc$ structure on $\R^{\geq 1} \times Y$:
 \[
 \s_0:= ( S^+ = \Lambda^{0,0}_J \oplus  \Lambda_J^{0,2}, S^- = \Lambda_J^{0,1} , \rho :\Lambda^1 \to \Hom (S^+ , S^-) ),
 \]
  where  
\[
 \rho  = \sqrt{2} \operatorname{Symbol} (\overline{\partial} + \overline{\partial}^* ) . 
 \]
 (See Lemma 2.1 in \cite{KM97}.)
 The notation $\Phi_0$ denotes 
 \[
 (1,0) \in \Om_{\R^{\geq1} \times Y}^{0,0} \oplus  \Om_{\R^{\geq1} \times Y}^{0,2}= \Gamma (S^+|_{\R^{\geq1} \times Y}).
 \]
 Then the {\it canonical $\Spinc$ connection} $A_0$ on $\s_0$ is uniquely defined by the equation 
 \begin{align}\label{A0}
 D^+_{A_0} \Phi_0= 0
 \end{align}
 on $\R^{\geq 1} \times Y$.

Let $W^+$ be a non-compact 4-manifold with conical end 
\[
W^+ := W \cup_Y  (\R^{\geq 1} \times Y).
\]
Pick a Riemann metric $g_{W^+}$ on $W^+$ such that $g_{W^+}|_{\R^{\geq 1} \times Y}=g_0$. 
Fix a $\Spinc$ structure $\s_{W^+}=(S^{\pm}_{W^+}, \rho_{W^+}) $ on $W^+$ equipped with an isomorphism $\s_{W^+} \to \s_0$ on $W^+\setminus W$.
We will omit this isomorphism in our notation.
Fix a smooth extension of $(A_0, \Phi_0)$ on $W^+$.
We also fix a nowhere zero proper extension $\sigma$ of $s \in \R^{\geq 1}$ coordinate to all of $W^+$ which is $0$ on $W \setminus \nu (\partial W)$, where $\nu (\partial W)$ is a small color neighborhood of $\partial W$ in $W$.

On $W^+$, {\it weighted Sobolev spaces} 
\[
\widehat{\cU}_{W^+}=L^2_{k, \alpha, A_0 }(i \Lambda^1_{W^+}\oplus S^+_{W^+}) \text{ and } 
\]
\[
\widehat{\cV}_{W^+}=L^2_{k-1, \alpha, A_0 }(i\Lambda^0_{W^+}\oplus i\Lambda^+_{W^+}\oplus S^-_{W^+})
\]
are defined using $\sigma$ for a positive real number $\al \in \R$ and $k \geq 4$, where $S^+_{W^+}$ and $S^-_{W^+}$ are positive and negative spinor bundles and the Sobolev spaces are given as completions of the following inner products: 
\begin{align}\label{inner}
\langle s_1, s_2\rangle_{L^2_{k, \alpha, A} } := \sum_{i=0}^k \int_{W^+} e^{2\alpha \sigma} \langle \nabla^i_A s_1, \nabla^i_A s_2  \rangle  \operatorname{dvol}_{W^+}, 
\end{align}
where the connection $\nabla^i_A$ is the induced connection from $A$ and the Levi-Civita connection.

Fix a sufficiently small positive real number $\al$. 
The invariant $\Psi(W, \xi, \s_{W, \xi})$(\cite{I19}) is obtained as a finite-dimensional approximation of the Seiberg--Witten map
\begin{align}\label{FX+}
\begin{split}
&\widehat{\mathcal{F}}_{W^+}:  \widehat{\cU}_{W^+}\to \widehat{\cV}_{W^+}\\
&(a, \phi)\mapsto (d^{*_\alpha}a, d^+a-\rho^{-1}(\phi\Phi^*_0+\Phi_0\phi^*)_0-\rho^{-1}(\phi\phi^*)_0, D^+_{A_0}\phi+\rho(a)\Phi_0+\rho(a)\phi). 
\end{split}
\end{align}

\par
The finite-dimensional approximation goes as follows.
We decompose $\widehat{\cF}_{W^+}$ as $\widehat{L}_{W^+}+\widehat{C}_{W^+}$ where 
\[
 \widehat{L}_{W^+}(a, \phi)=(d^{*_\alpha}a, d^+a-\rho^{-1}(\phi\Phi^*_0+\Phi_0\phi^*)_0, D^+_{A_0}\phi+ \rho(a)\Phi_0)
 \]
 and 
 \[
  \widehat{C}_{W^+}(a, \phi) = (0, -\rho^{-1}(\phi\phi^*)_0, \rho(a)\phi). 
  \]
Then $\widehat{L}_{W^+}$ is linear Fredholm and  $\widehat{C}_{W^+}$ is quadratic, compact. (Here we used $\alpha>0$. )
Pick an increasing sequence of finite-dimensional subspaces $\widehat{\cV}_{W^+, n} \subset \cV_{W^+}\, (n \in \Z^{\geq 1})$ such that 
\begin{itemize}
    \item For any $\gamma \in \widehat{\cV}_{W^+}$, 
    \[
 \| \pr_{\widehat{\cV}_{W^+, n}}( \gamma) - \gamma \|_{\widehat{\cV}_{W^+}}   \to 0  \text{ as } n\to \infty
\]
and 
\item $\Coker \widehat{L}_{W^+} := (\im \widehat{L}_{W^+})^{\perp_{L^2_{k-1, \alpha}}} \subset \widehat{\cV}_{W^+, 1}$.  
\end{itemize}
Let 
\[
\widehat{\cU}_{W^+, n}=\wh{L}^{-1}(\widehat{\cV}_{W^+, n})\subset \wh{\cU}_{W^+},
\]
 and 
\[
\mathcal{F}_{W^+, n}:=  \pr_{\widehat{\cV}_{W^+, n}}   \circ \mathcal{F}_{W^+}: \widehat{\cU}_{W^+, n}\to \widehat{\cV}_{W^+, n}. 
\]
We can show that for a large $R>0$, a small $\varepsilon$ and a large $n$, we have a well-defined map
\[
\mathcal{F}_{W^+, n}: B(\widehat{\cU}_{W^+, n}, R)/S(\widehat{\cU}_{W^+, n}, R)\to B(\widehat{\cV}_{W^+, n}, \varepsilon)/S(\widehat{\cV}_{W^+, n}, \varepsilon).
\]
The stable homotopy class of $\mathcal{F}_{W^+, n}$ defines {\it the Bauer--Furuta version of Kronheimer--Mrowka's invariant} 
\begin{equation}\label{iida}
\Psi(W, \xi , \s_W) \in \pi^S_{\langle e(S^+_W, \Phi_0), [(W, \partial W)]\rangle} 
\end{equation}
 defined in \cite{I19}, where $\pi^S_{i}$ is the $i$-th stable homotopy group of the spheres and $e(S^+_W, \Phi_0)$ is the relative Euler class of $S^+_W$ with respect to the section $\Phi_0|_Y $.
\par
We will review some properties of Bauer-Furuta invariant and this invariant.
First, let us see non-vanishing results for Bauer-Furuta invariant. 

\begin{thm}[Bauer, \cite{B04}]\label{bauer} 
 We consider the following two types of $\Spinc$-4-manifolds:
\begin{itemize}
    \item $(X, \s)$ is closed sympectic 4-manifolds with $b_1(X)=0$ and $b^+_2(X) \equiv 3 \operatorname{mod} 4$, and
    \item $(X', \s')$ is a closed negative definite $\Spinc$ 4-manifold  with $b_1(X')=0$ and $d(\s')=-1$.
\end{itemize}
Then as non-equivariant stable homotopy classes of maps, $BF (X, \s)$ is a generator of the 1-st stable homotopy group $\pi^S_1 \cong \Z_2$ and $BF (X', \s')$ is a generator of the $0$-th stable homotopy group $\pi^S_0 \cong \Z$.

\end{thm}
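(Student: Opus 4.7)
For the symplectic case, my plan is to combine Taubes' theorem on the Seiberg--Witten invariants of symplectic manifolds with a degree computation in stable homotopy. For the canonical $\Spinc$ structure $\mathfrak{s}_\omega$ one has $c_1^2(\mathfrak{s}_\omega)=2\chi(X)+3\sigma(X)$, so the Seiberg--Witten virtual dimension is $0$. Using $b_1(X)=0$ one computes $\chi(X)+\sigma(X)=2+2b_2^+(X)$, and hence $d_{\C}=(c_1^2-\sigma)/8=(1+b_2^+)/2$, an integer since $b_2^+$ is odd. Thus $BF(X,\mathfrak{s}_\omega)\colon (\R^{-b_2^+}\oplus\C^{d_{\C}})^+\to S^0$ represents a class in $\pi^S_{2d_{\C}-b_2^+}=\pi^S_1\cong\Z/2$, generated by the stable Hopf map $\eta$. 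To pin down the class, I would invoke Taubes' theorem, which gives $\mathrm{SW}(X,\mathfrak{s}_\omega)=\pm 1$. Since the numerical Seiberg--Witten count for $b_2^+\geq 2$ is recovered from $BF$ via a Hopf-type pairing after passing to the non-equivariant reduction, a null-homotopic $BF$ would force $\mathrm{SW}\equiv 0\pmod 2$, contradicting Taubes. Hence $BF(X,\mathfrak{s}_\omega)=\eta$, the generator.

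For the negative definite case, my plan is to combine Donaldson's diagonalization theorem with an explicit computation near the unique reducible. Since $X'$ is closed, negative definite with $b_1(X')=0$, its intersection form is diagonalizable over $\Z$ as $-\mathrm{diag}(1,\dots,1)$, and characteristic vectors satisfy $c_1^2\geq -b_2^-$ with equality for short characteristic vectors. The condition $d(\mathfrak{s}')=-1$ translates to $c_1^2(\mathfrak{s}')=-b_2^-$, hence $d_{\C}=(c_1^2-\sigma)/8=0$ and $BF(X',\mathfrak{s}')\colon S^0\to S^0$ lies in $\pi^S_0\cong\Z$. Because $b_2^+=0$ and $b_1=0$, there are no irreducibles contributing generically, and the entire finite-dimensional approximation localizes at a neighborhood of the unique reducible (vanishing-spinor) solution. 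A local model computation at this reducible identifies the approximation with a degree $\pm 1$ self-map of $S^0$, with the sign fixed by the canonical orientation of the $BF$ map, so $BF(X',\mathfrak{s}')$ is a generator of $\pi^S_0$.

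The main obstacle in both cases will be controlling the finite-dimensional approximation well enough to identify the stable homotopy class. For the symplectic case one must verify that the Hopf invariant of the approximated map agrees with $\mathrm{SW}(X,\mathfrak{s}_\omega)\pmod 2$, which reduces to a careful analysis of Taubes' unique irreducible solution inside the approximation scheme of Subsection~\ref{finite dim}; here the hypothesis $b_2^+\equiv 3\pmod 4$ ensures that the relevant degree shift places $BF$ into $\pi^S_1$ rather than into a higher stable stem. For the negative definite case one must rule out higher-order corrections near the reducible and check that the sign of the degree matches the generator of $\pi^S_0$, which in turn rests on the fact that the short characteristic condition makes the reducible isolated and non-degenerate. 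Both computations were carried out originally by Bauer~\cite{B04} using the $\mathrm{Pin}(2)$-equivariant extension of the framework; for our applications only the non-equivariant statements are needed.
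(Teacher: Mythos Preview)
The paper does not prove this statement at all; it is quoted from Bauer~\cite{B04} and used as a black box. So there is no argument in the paper to compare your proposal against, and your sketch should be judged on its own terms.

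Your outline has the right shape but contains a genuine error in the symplectic case. You write that ``the hypothesis $b_2^+\equiv 3\pmod 4$ ensures that the relevant degree shift places $BF$ into $\pi^S_1$ rather than into a higher stable stem.'' This is false: your own computation gives $2d_{\C}-b_2^+=1$ for \emph{every} odd $b_2^+$, so the non-equivariant class lies in $\pi^S_1$ whenever $b_2^+$ is odd. The $\bmod\ 4$ condition enters precisely at the step you gloss over, namely the assertion that ``a null-homotopic $BF$ would force $\mathrm{SW}\equiv 0\pmod 2$.'' The Seiberg--Witten integer is recovered from the \emph{$S^1$-equivariant} Bauer--Furuta class, not from its non-equivariant image, and the forgetful map from the relevant equivariant stable stem to $\pi^S_1\cong\Z/2$ kills the class when $b_2^+\equiv 1\pmod 4$. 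Identifying when the forgetful map is nonzero is the actual content of Bauer's computation, and your sketch does not address it.

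Two smaller points. First, your attribution to ``the $\mathrm{Pin}(2)$-equivariant extension'' is incorrect: $\mathrm{Pin}(2)$-symmetry is available only for genuine spin structures, whereas the canonical $\Spinc$ structure on a symplectic $4$-manifold is generally not spin; Bauer's argument in~\cite{B04} is $S^1$-equivariant. Second, in the negative definite case the appeal to Donaldson's diagonalization is superfluous: the hypothesis $d(\s')=-1$ already forces $c_1^2(\s')=\sigma(X')$ by direct algebra (using $b_1=0$, $b_2^+=0$), hence $d_{\C}=0$; diagonalization would only be needed to produce such an $\s'$, but here it is given.
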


Using \cref{bauer} and the connected sum formula below, we can prove a certain non-vanishing result. 

\begin{thm}[Iida, \cite{I19}]\label{conn sum}
Let $(W, \s_W)$ be an oriented $\Spinc$ compact 4-manifold whose boundary is a contact $3$-manifold $(Y, \xi)$ with $b_3(W)=0$, $\s_W|_Y = \s_\xi$ and let $(X, \s_X)$ be a closed $\Spinc$ 4-manifold with $b_1(X)=0$. Then, we have 
\begin{align}
     \Psi (W\# X , \xi, \s_W \# \s_X ) =  \Psi (W, \xi, \s_W ) \wedge BF (X, \s_X ) 
\end{align}
in the stable homotopy group up to sign. Here we forget the $ S^1$ action of $BF (X, \s_X )$. 

\end{thm}

Since Iida's invariant \eqref{iida} is $\pm\id$ for any weak symplectic filling with $b_3=0$ \cite[Corollary  4.3]{I19}, thus by combining \cref{bauer}, \cref{conn sum}, we obtain the following non-vanishing results. 
\begin{thm}[Iida, \cite{I19}]\label{main0}
Let $(W, \om)$ be a weak symplectic filling of a contact 3-manifold $(Y,\xi)$ with $b_3(W)=0$. We consider the following two types of $\Spinc$-4-manifolds.
\begin{itemize}
    \item $(X, \om_1)$, $(X_2, \om_2)$ and $(X_3, \om_3)$ are closed sympectic 4-manifolds with $b_1(X_i)=0$ and $b^+_2(X_i) \equiv 3 \operatorname{mod} 4$, and
    \item $(X, \s)$ is a closed negative definite $\Spinc$ 4-manifold  with $b_1(X)=0$ and $d(\s)=-1$.
\end{itemize}
Then,  the invariant $\Psi (X', \xi, \s' )$ does not vanish for 
\[
(X',\s') = \begin{cases}  (W \# X, \s_{\om} \# \s)  \\
(W\# X \# X_1, \s_{\om} \# \s\# \s_{\om_1} ) \\ 
(W\# X \# X_1\# X_2, \s_{\om} \# \s\# \s_{\om_1}\# \s_{\om_2} ) \\
(W\# X \# X_1\# X_2\# X_3, \s_{\om} \# \s\# \s_{\om_1}\# \s_{\om_2} \# \s_{\om_3}) . \\
\end{cases} 
\]
\qed
\end{thm}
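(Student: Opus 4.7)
The plan is to derive \cref{main0} as an essentially formal consequence of the three ingredients already introduced in this subsection: (1) the identification from \cite[Corollary~4.3]{I19} of $\Psi(W, \xi, \s_\om)$ with $\pm \id \in \pi^S_0$ for any weak symplectic filling $(W, \om)$ with $b_3(W) = 0$, (2) the connected-sum formula of \cref{conn sum}, and (3) Bauer's computations in \cref{bauer}.

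First I would iterate the connected-sum formula. Writing $W' = W \# X$, the Spin$^c$ 4-manifold $(W', \s_\om \# \s)$ has contact boundary $(Y, \xi)$, still satisfies $b_3(W') = 0$, and its Spin$^c$-structure restricts to $\s_\xi$ on the boundary, so \cref{conn sum} applies again to $W' \# X_1$, and so on. Iterating this with $k \in \{0,1,2,3\}$ symplectic summands and substituting $\Psi(W, \xi, \s_\om) = \pm \id$ yields
\[
\Psi\bigl(W \# X \# X_1 \# \cdots \# X_k,\ \xi,\ \s_\om \# \s \# \s_{\om_1} \# \cdots \# \s_{\om_k}\bigr)
= \pm\, \id \wedge BF(X, \s) \wedge BF(X_1, \s_{\om_1}) \wedge \cdots \wedge BF(X_k, \s_{\om_k})
\]
in the stable homotopy group (where we have discarded the $S^1$-equivariance as in \cref{conn sum}).

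Next I would plug in the values from \cref{bauer}: $BF(X, \s)$ is (up to sign) the generator $1 \in \pi^S_0 \cong \Z$, while each $BF(X_i, \s_{\om_i})$ is the nontrivial element $\eta \in \pi^S_1 \cong \Z/2$. Smashing with the unit of $\pi^S_*$ is the identity on stable homotopy, so the four classes in the theorem represent, up to sign, the elements $1$, $\eta$, $\eta^2$, $\eta^3$ in $\pi^S_0, \pi^S_1, \pi^S_2, \pi^S_3$ respectively.

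The last step is to observe that these products of $\eta$ are all non-zero: this is classical, since $\pi^S_0 \cong \Z$, $\pi^S_1 \cong \Z/2 = \langle \eta\rangle$, $\pi^S_2 \cong \Z/2 = \langle \eta^2\rangle$, and $\eta^3 = 12\nu$ is the unique element of order $2$ in $\pi^S_3 \cong \Z/24$. I do not anticipate a serious obstacle; the only point requiring care is the bookkeeping in the iteration of \cref{conn sum}, namely verifying at each step that the intermediate connected sum satisfies the hypotheses ($b_3 = 0$, boundary contact structure unchanged, and $\s|_Y = \s_\xi$) so that the formula can be reapplied.
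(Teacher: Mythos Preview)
Your proposal is correct and follows exactly the approach the paper itself indicates: the sentence preceding \cref{main0} says the result is obtained by combining \cite[Corollary~4.3]{I19}, \cref{bauer}, and \cref{conn sum}, and the theorem is stated with \qed and no further argument. Your iteration of the connected-sum formula together with the identification of the resulting classes as $1, \eta, \eta^2, \eta^3$ in $\pi^S_*$ is precisely the intended derivation.
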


\section{Adjunction-Inequality for Iida's invariant}\label{Adjunction}
In this section, we will give a proof of \cref{main2} by proving a more general result as \cref{main2'}. 
There are several proofs of adjunction inequalities for the usual Seiberg--Witten invariants of closed 4-manifolds \cite{ KM94, FS95, Fr05, KM07}. In this paper, we follow the method given in \cite{Fr05} and \cite{KM94}. 

\begin{thm}\label{main2'}
Let $(W, \s)$ be an oriented Spin$^c$ compact 4-manifold whose boundary is a contact $3$-manifold $(Y, \xi)$ with $b_3(W)=0$ and $\s|_Y = \s_\xi$.
If $\Psi (W, \xi, \s) \neq 0$, then the following results hold: 
\begin{itemize}
\item[(i)]
There are no closed oriented 3-dimensional submanifolds $M \subset \operatorname{int}(W)$ satisfying the following three conditions.
\begin{enumerate}
\item
$M$ admits a Riemann metric with positive scalar curvature.
\item
$H^2(W, \partial W; \R)\to H^2(M; \R)$ is non-zero.
\item $M$ separates $W$.
\end{enumerate}
\item[(ii)] Non-torsion homology class in $H_2(W, \partial W; \Z)$ cannot be realized by an embedded 2-sphere whose self-intersection number is non-negative. 
\item[(iii)] For any connected, orientable, embedded, closed surface $\Sigma \subset W$ with $g(\Sigma)>0$ and $[\Sigma]\cdot [\Sigma] \geq 0$, we have 
\[
| \langle c_1 (\s) , [\Sigma] \rangle | + [\Sigma]\cdot [\Sigma]\leq 2 g(\Sigma) - 2. 
\]
\end{itemize}
\end{thm}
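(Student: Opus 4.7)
My plan is to follow the neck-stretching method of Fr\o yshov \cite{Fr05} and Kronheimer--Mrowka \cite{KM94}, adapted to the Bauer--Furuta framework of \cite{I19}. The common strategy for all three parts is: assuming the negation, construct an auxiliary separating 3-manifold $M \subset \operatorname{int}(W)$ with controlled geometry, introduce a one-parameter family of metrics $g_t$ on $W^+$ containing an increasingly long cylindrical neck $[-t,t] \times M$, and exhibit a contradiction between (a) the existence of approximate Seiberg--Witten solutions on $W^+_t$ forced by $\Psi(W, \xi, \s) \neq 0$ through the finite-dimensional approximation, and (b) the three-dimensional Seiberg--Witten analysis on $M$ obtained by taking the $t \to \infty$ limit.

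For part (i), I take $M$ as in the hypothesis. Condition (2) allows one to choose a closed self-dual perturbation 2-form $\eta$ on $W^+$ (supported away from the conical end) whose restriction to $M$ is cohomologically non-trivial in $H^2(M; \R)$. A cobordism-invariance argument shows that $\Psi$ is unchanged under the neck-stretching and after adding $\eta$ to the equations. Non-vanishing then supplies approximate solutions $(A_t, \Phi_t)$ on $W^+_t$. Uhlenbeck-type compactness together with the conical-end weighted Sobolev estimates of \cite{I19, KM97} yields a limit solution on $\R \times M$. On the PSC manifold $M$, the Weitzenb\"ock identity forces $\Phi \equiv 0$, so the limit is a translation-invariant reducible, i.e.\ a flat connection; by Chern--Weil this gives $2\pi c_1(\s|_M) + [\eta|_M] = 0$ in $H^2(M; \R)$, contradicting non-triviality of $[\eta|_M]$.

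Part (ii) reduces to part (i) via a blow-up. Given a non-torsion class represented by an embedded $S^2$ with $n := [S^2]^2 \geq 0$, form $W' := W \# n\overline{\CP^2}$ and let $\widetilde S$ denote the proper transform of $S^2$, which satisfies $[\widetilde S]^2 = 0$. By the connected-sum formula \cite[Thm.\ 4.4]{I19} together with the Bauer--Furuta non-vanishing on $\overline{\CP^2}$ in the $d = -1$ spin$^c$ structure (\cref{bauer}), one chooses an extended spin$^c$ structure $\s'$ so that $\Psi(W', \xi, \s') \neq 0$. Then $M := \partial N(\widetilde S) \cong S^2 \times S^1$ is PSC and separating in $W'$, and $\operatorname{PD}[\widetilde S]$ restricts non-trivially to $H^2(M; \R)$, so part (i) yields a contradiction. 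For part (iii), assume the adjunction inequality fails for $\Sigma$ of genus $g \geq 1$; blow up to reach $[\Sigma]^2 = 0$ and, after replacing $\s$ by its conjugate if needed, $\langle c_1, [\Sigma]\rangle > 2g - 2$. Now $M := \partial N(\Sigma) \cong \Sigma \times S^1$ is not PSC, so in place of the Weitzenb\"ock argument, I would analyze three-dimensional Seiberg--Witten solutions on $\Sigma \times S^1$ via dimensional reduction to vortex equations on $\Sigma$ as in \cite{KM94}; the hypothesis $\langle c_1, [\Sigma]\rangle > 2g - 2$ forces the relevant vortex moduli space to be empty, again contradicting the existence of a limit solution.

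The main obstacle, as indicated in the paper's remark after \cref{main2}, is the non-compactness of $W^+$. The standard neck-stretching compactness argument assumes the complement of the neck is compact, so here one must maintain uniform weighted-Sobolev control on the conical end simultaneously with the interior neck-stretching. Concretely, the most technical step is verifying that the finite-dimensional Bauer--Furuta approximation, through its non-null-homotopy, actually produces approximate SW solutions $(A_t, \Phi_t)$ with uniform bounds in the weighted norms $L^2_{k,\alpha,A_0}$ of \eqref{inner}, so that the Uhlenbeck/energy-concentration arguments apply and the limit lies on the compact part $W \cup ([-t,t] \times M)$ rather than escaping to the conical end.
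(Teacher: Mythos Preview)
Your approach is essentially the same as the paper's, and the overall architecture (neck-stretching, perturb by $\eta$, extract a translation-invariant limit on $\R\times M$, contradict via 3-dimensional analysis) matches exactly. Two points deserve sharpening. First, in part (i) your endgame reads ``$2\pi c_1(\s|_M)+[\eta|_M]=0$ contradicts non-triviality of $[\eta|_M]$,'' but nothing prevents $c_1(\s|_M)$ from being a nonzero multiple of $[\eta|_M]$; the paper handles this by replacing $\eta$ with $s\eta$ for a small $s>0$ chosen so that $c_1(S_M)+\tfrac{2s}{\pi}[\eta]\neq 0$ in $H^2(M;\R)$, and the Weitzenb\"ock argument then needs $s$ small enough that the perturbation term is dominated by the scalar curvature. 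Second, you never invoke hypothesis (3) that $M$ separates $W$. In the paper this is precisely what makes the Chern--Simons--Dirac energy drop $\cL_\eta(x_T|_{\{-T\}\times M})-\cL_\eta(x_T|_{\{T\}\times M})$ bounded independently of $T$: after the KM97 gauge-fixing on the conical end, the residual gauge ambiguity in $\cL_\eta$ is a pairing $\langle(2\pi^2 c_1-4\pi[\eta])\cup[u^T|_M],[M]\rangle$, and separating is used to show the Mayer--Vietoris boundary map $H_1(W(T);\Z)\to H_0(M;\Z)$ vanishes, forcing this correction to zero. Without this uniform energy bound your ``Uhlenbeck-type compactness'' step does not go through. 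For (ii), your blow-up to $[\widetilde S]^2=0$ is exactly what the paper's terse remark intends (for $n>0$ the normal circle bundle is a lens space with $H^2(\,\cdot\,;\R)=0$, so condition (2) fails without blowing up), and (iii) is handled identically.
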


The first part of \cref{main2} is contained in \cref{main2'}(ii). 
\begin{rem}
Note that (ii) follows from (i) by considering a normal sphere bundle of the embedded 2-sphere. 
\end{rem}

\subsection{Proof of \cref{main2'}(i)} 
First, as in the previous section, we fix the data $g_{W^+}$, $\s_{W^+}$, $S^\pm_{W^+}$, $A_0$, $\Phi_0$, $\widehat{\mathcal{U}}_{W^+}$ and $\widehat{\mathcal{V}}_{W^+}$. 
In addition, suppose there is an embedded codimension-1 manifold  $M \subset \text{int}(W)$. We can suppose that such data $g_{W^+}$, $\s_{W, \xi}$, $S^\pm_{W^+}$, $A_0$, and $\Phi_0$ are translation invariant on some product neighborhood on $M$. 
We set $\overline{W}^+  :=W ^+\setminus \overset{\circ}{\nu}(M)  $, where $\overset{\circ}{\nu} (M)$ is an even smaller open tubular neighborhood of $M$. Then $\partial \overline{W}^+  = M \cup (-M)$.

Let $T$ be a non-negative real number and $W^+(T)$ be a Riemannian manifold obtained from $W^+$ by inserting a neck $[-T, T]\times M$ with a product metric: 
\[
W^+(T)=  [-T,  T]\times M \cup_{ M \cup (-M) } \overline{W}^+  
\]
such that the restriction of the metric of $W^+(T)$ on $\overline{W}^+ $ coincides with a given metric. We set $W(T):= W^+(T) \setminus \R^{\geq 1}\times Y$.
Note that $W^+(T)$ and $W^+$ are diffeomorphic as manifolds. We will identify $W^+(0)$ with $W^+$ as Riemann manifolds. 
We fix the obvious  $\Spinc$ structures and reference configuration $(A_0, \Phi_0)$ on $W^+(T)$ which are product on the smaller neck $[-T, T]\times M $ and coincide outside the neck with those of $W^+ (0)=W^+$.
We define $\sigma : W^+(T)\to \R$ to be $0$ on the neck and to coincide outside the neck with those of $W^+ (0)=W^+$.
On $W^+(T)$, weighted Sobolev spaces 
\[
\widehat{\cU}_{W^+(T)}=L^2_{k, \alpha}(i \Lambda^1_{W^+(T)}\oplus S^+_{W^+(T)}) \text{ and } 
\]
\[
\widehat{\cV}_{W^+(T)}=L^2_{k-1, \alpha}(i\Lambda^0_{W^+(T)}\oplus i\Lambda^+_{W^+(T)}\oplus S^-_{W^+(T)})
\]
are defined as before for a positive real number $\al \in \R$ and $k \geq 4$. Fix a sufficiently small positive real number $\alpha$. 
We also have a family of Seiberg--Witten maps 
\[
\widehat{\mathcal{F}}_{W^+(T)} = \widehat{L}_{W^+(T)} + \widehat{C}_{W^+(T)} : \widehat{\cU}_{W^+(T)} \to \widehat{\cV}_{W^+(T)} . 
\]

\par 
Since we have the condition 
\[
 \im (H^2(W, \partial W; \R)\to H^2(M; \R) ) \neq 0,  
 \]
 we can take a non-exact closed 2-form $\eta$ on $M$ and a 2-form $\hat{\eta}$ on $W^+(T)$ whose support is contained in $W^+(T)\setminus \R^{\geq 1}\times Y$ and which extends the 2-form on the neck which is the pull-back of  $\eta$ by the projection $[-T, T]\times M\to M$.

For each $T \geq 0$, define the $\eta$-perturbed 
Seiberg--Witten map
\begin{align}\label{FX+1}
\begin{split}
&\widehat{\mathcal{F}}_{W^+(T), \eta}:  \widehat{\cU}_{W^+(T)}\to \widehat{\cV}_{W^+(T)}\\
&(a, \phi)\mapsto (d^{*_\alpha}a, d^+a-\rho^{-1}(\phi\Phi^*_0+\Phi_0\phi^*)_0-\rho^{-1}(\phi\phi^*)_0+\hat{\eta}^+, D^+_{A_0}\phi+\rho(a)\Phi_0+\rho(a)\phi). 
\end{split}
\end{align}
and the Seiberg--Witten moduli space for the $\eta$-perturbed equation by 
\[
\mathcal{M}_{\eta}(T): =\widehat{\mathcal{F}}_{W^+(T), \eta}^{-1} (0) 
\]
For any $T$, we fix an increasing sequence of finite dimensional vector subspaces $\widehat{\cV}_{W^+(T), n}$ of $\widehat{\cV}_{W^+(T)}$ such that 
we have a well-defined map 
\begin{align}\label{BF}
f_T : B(\widehat{\cU}_{W^+(T), n_T}; R_T )/ S(\widehat{\cU}_{W^+(T), n_T}; R_T ) \to  B(\widehat{\cV}_{W^+(T), n_T}; \varepsilon_T )/ S(\widehat{\cV}_{W^+(T), n_T}; \varepsilon_T ), 
\end{align}
as a finite dimensional approximation of $\widehat{\mathcal{F}}_{W^+(T), \eta}$, 
where 
\[
\widehat{\cU}_{W^+(T), n} := \widehat{L}_{W^+(T)} ^{-1} \widehat{\cV}_{W^+(T), n}
\]
for sufficiently large $R_T$, $n_T$ and a sufficiently small $\varepsilon_T$.

Standard arguments show that $\eqref{BF}$ represents the homotopy class $\Psi(W, \xi, \s)$ by finite dimensional approximation of this perturbed equation.
\begin{lem} \label{non-empty}If $\Psi(W, \xi, \s)$ is not stably null-homotopic, then 
$\mathcal{M}_{\eta}(T)$ is non-empty for all $T \geq 0$
\end{lem}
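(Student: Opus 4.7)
My plan is to argue by contrapositive: assume that $\mathcal{M}_\eta(T_0) = \emptyset$ for some fixed $T_0 \geq 0$ and deduce that $\Psi(W,\xi,\s)$ must be stably null-homotopic.

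First, I would use the remark preceding the lemma, namely that for every $T$ the finite-dimensional approximation $f_T$ of \eqref{BF} represents the stable homotopy class $\Psi(W,\xi,\s)$. This is the standard homotopy invariance of Bauer--Furuta type constructions: the two-parameter family $(T, t\hat\eta)$ with $T \in [0,T_0]$ and $t \in [0,1]$ is a continuous family of Fredholm-plus-compact maps (the inhomogeneity $\hat\eta$ is compactly supported and smooth, so it contributes only a bounded term and does not affect the Fredholm/compact decomposition in the weighted Sobolev setting), and a parametrized finite-dimensional approximation produces a stable homotopy between $f_{T_0}$ and the map that defines $\Psi(W,\xi,\s)$ in \cite{I19}. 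Hence it suffices to show that $f_{T_0}$ is pointed null-homotopic.

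Next, I would argue that for sufficiently large $n_{T_0}$ and $R_{T_0}$, and sufficiently small $\varepsilon_{T_0}$, the image of $f_{T_0}$ avoids the origin $0 \in \widehat{\cV}_{W^+(T_0), n_{T_0}}$. If not, a diagonal choice produces $n_k \to \infty$ and $(a_k, \phi_k) \in \widehat{\cU}_{W^+(T_0), n_k}$ with $\|(a_k, \phi_k)\|_{L^2_{k,\alpha}} \leq R_{T_0}$ and $\pr_{\widehat{\cV}_{W^+(T_0), n_k}} \widehat{\mathcal{F}}_{W^+(T_0), \eta}(a_k, \phi_k) = 0$. Writing $\widehat{\mathcal{F}}_{W^+(T_0), \eta} = \widehat{L}_{W^+(T_0)} + (\widehat{C}_{W^+(T_0)} + \hat\eta^+)$, with $\widehat{L}$ Fredholm and $\widehat{C}$ compact on the weighted spaces, and using that the projections $\pr_{\widehat{\cV}_{W^+(T_0), n}}$ converge strongly to the identity on bounded sets, a weak-compactness argument extracts a subsequential limit $(a_\infty,\phi_\infty) \in \widehat{\mathcal{F}}_{W^+(T_0), \eta}^{-1}(0) = \mathcal{M}_\eta(T_0)$, contradicting emptiness. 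Once the origin is missed, the straight-line radial retraction of $B(\widehat{\cV}_{W^+(T_0), n_{T_0}}; \varepsilon_{T_0}) \setminus \{0\}$ onto its boundary sphere descends to a pointed null-homotopy of $f_{T_0}$ in $B/S$, yielding $\Psi(W,\xi,\s) = 0$ and the desired contradiction.

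The main obstacle I anticipate is the a priori/compactness step on the non-compact manifold $W^+(T_0)$ with its conical end, where the usual Sobolev compactness fails without weighting. However, the exponentially weighted spaces $\widehat{\cU}_{W^+}$ and $\widehat{\cV}_{W^+}$ defined via \eqref{inner} are designed precisely to make $\widehat{L}$ Fredholm and $\widehat{C}$ quadratic-compact, and these estimates form the core of the construction of $\Psi$ in \cite{I19}. Because the perturbation $\hat\eta^+$ is smooth and compactly supported away from the conical end, it preserves these estimates, so the compactness argument goes through. With those bounds, the finite-dimensional passage and the resulting null-homotopy are routine.
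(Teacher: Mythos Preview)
Your proposal is correct and follows essentially the same route as the paper: both arguments hinge on the observation that if the finite-dimensional approximation map missed the origin it would be null-homotopic, and both use weak compactness together with the compactness of $\widehat{C}_{W^+(T)}$ on the weighted spaces to produce a genuine solution from a sequence of approximate ones. The paper phrases this directly (non-null-homotopic $\Rightarrow$ surjective $\Rightarrow$ preimage of $0$ exists $\Rightarrow$ weak limit lies in $\mathcal{M}_\eta(T)$), while you phrase it contrapositively, but the content is identical.
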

\begin{proof}
We fix an arbitary $T$.

Finite dimensional approximation gives a sequence of  maps
\[
\widehat{L}_{W^+(T)}+\pr_{\widehat{\cV}_n}\widehat{C}_{W^+(T)}: B(\widehat{\cU}_{W^+(T), n}; R_T)/ S(\widehat{\cU}_{W^+(T), n}; R_T)
\]
\[
\to B(\widehat{\cV}_{W^+(T), n}, \varepsilon_T)/S(\widehat{\cV}_{W^+(T), n}, \varepsilon_T).
\]
The assumption implies that, for sufficiently large $n$,
these maps are not null homotopic, and in particular surjective.
Thus, we can take a sequence $\gamma_n \in \widehat{\cU}_{W^+(T)} $ such that 
\[
\|\gamma_n\|_{L^2_{k, \alpha}} =R_T \text{ and }(\widehat{L}_{W^+(T)}+\pr_{\widehat{\cV}_{W^+(T), n}}\widehat{C}_{W^+(T)})(\gamma_n)=0.
\]
Since $\gamma_n$ is a bounded sequence,  we can take a weakly convergent subsequence. 
We denote the weak limit by $\gamma_\infty \in \widehat{\cU}_{W^+(T)} $.
We can also assume that $\widehat{C}_{W^+(T)}(\gamma_n)\to \widehat{C}_{W^+(T)}(\gamma_\infty)$ as $n \to \infty$ with fixed $T$
by taking a subsequence since
$\widehat{C}_{W^+(T)}: L^2_{k, \alpha}\to  L^2_{k-1, \alpha}$ is compact for $\alpha>0$, here we used Sobolev multiplication theorem for 4-manifolds with conical ends, see \cite[Lemma 2.1]{I19}. 
We have
\[
\begin{split}
(\widehat{L}_{W^+(T)}+\widehat{C}_{W^+(T)})(\gamma_n)
&=
(\widehat{L}_{W^+(T)}+\pr_{\widehat{\cV}_n}\widehat{C}_{W^+(T)})(\gamma_n)+(1-\pr_{\widehat{\cV}_{W^+(T), n}})C(\gamma_n)\\
&=(1-\pr_{\widehat{\cV}_{W^+(T), n}})C(\gamma_n) \to 0\quad \text{as }n\to \infty . 
\end{split}
\]
Thus, $\gamma_\infty$ satisfies
\[
(\widehat{L}_{W^+(T)}+\widehat{C}_{W^+(T)})(\gamma_\infty)=0.
\]
In other words, $\gamma_\infty \in \mathcal{M}_{\eta}(T)$. This completes the proof. 
\end{proof}
\par
Using \cref{non-empty}, we take a 1-parameter family of solutions $(A_T,\Phi_T) \in \mathcal{M}_{\eta}(T)$ for each $T\geq 0$.
In order to show \cref{main2} (i), we prove the following proposition as a preliminary, which is essentially proved in \cite[Proposition 8]{KM94}, see also \cite[Proposition 4.2.4]{MR06}.
\begin{prop} \label{existence}
Suppose $\mathcal{M}_{\eta}(T)\neq \emptyset$ for all sufficiently large $T$.
Consider the $\Spinc$ structure on the cylinder $\R\times M$ 
obtained as the pull back of $\s|_M$ by the projection $\R\times M\to M$ 

Then, there exists a solution to the $\eta$-perturbed Seiberg--Witten equation
\begin{equation}\label{SWcyl}
\begin{split}
\frac{1}{2}F^+_{A^t}-\rho^{-1}(\Phi\Phi^*)_0&=2i\hat{\eta}^+\\
D^+_A\Phi&=0
\end{split}
\end{equation}
 which is translation invariant and in a temporal gauge.
Here, $\hat{\eta}$ is the pull back of $\eta$ by the projection $\R\times M\to M$ and the word temporal gauge means that the $dt$-component of the $\Spinc$ connection vanishes.
\end{prop}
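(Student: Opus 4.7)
The plan is to obtain $(A_\infty, \Phi_\infty)$ as a $C^\infty_{\loc}$-limit of appropriate translates and gauge-transforms of the restrictions $(A_T, \Phi_T)|_{[-T,T]\times M}$, following the stretched-neck strategy of Kronheimer--Mrowka.

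First I would establish $T$-uniform bounds. The Weitzenb\"ock formula combined with the maximum principle produces a pointwise bound on $\Phi_T$ depending only on the scalar curvature of the metric (which is translation-invariant along the neck) and on $\hat\eta$. Via the curvature part of \eqref{FX+1}, this gives a pointwise bound on $F^+_{A_T}$ over the neck. The total analytic energy of $(A_T, \Phi_T)$ restricted to $[-T,T]\times M$ equals the drop of the $\eta$-perturbed Chern--Simons--Dirac functional between the two neck cross-sections; since the endpoint values are controlled by the behavior of the solution on the $T$-independent complement $\overline{W}^+$, the total energy on the neck is bounded by some $E<\infty$ independent of $T$.

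Next I would pigeonhole: fix $L>0$ and, for $T\gg L$, subdivide $[-T,T]$ into length-$L$ subintervals. Some subinterval $[t_T-L/2, t_T+L/2]$ carries energy at most $EL/(2T)$, which tends to $0$ as $T\to\infty$. Translating by $-t_T$ in the $\R$-direction yields a sequence of solutions on $[-L/2,L/2]\times M$ of vanishing energy. After placing each in temporal gauge along the $\R$-factor (possible because both the equations and the perturbation are translation-invariant on the neck), the uniform $C^0$ bound on the spinor together with the small-energy bound allow standard elliptic bootstrapping for the Seiberg--Witten system to deliver uniform $L^2_k$-control on interior compact subsets. A diagonal subsequence converges in $C^\infty_{\loc}$ to a zero-energy solution of \eqref{SWcyl} on $[-L/2,L/2]\times M$. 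A further diagonalization over $L\to\infty$ produces $(A_\infty,\Phi_\infty)$ on $\R\times M$ whose energy on every compact subset is zero.

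Finally, zero energy on $\R\times M$ is exactly the Chern--Simons--Dirac gradient-flow energy, so it forces $\partial_t A_\infty = 0$ and $\partial_t \Phi_\infty = 0$ in temporal gauge; the limit is therefore translation-invariant, temporal, and by smooth convergence solves \eqref{SWcyl}. The main obstacle is the first step, namely securing the $T$-uniform energy bound in the presence of the perturbation $\hat\eta$, whose $L^2$-norm over the neck grows with $T$. The resolution is that the controlling quantity is the CSD-drop rather than a raw neck integral: since $\eta$ is closed and the endpoint values of the perturbed CSD functional are pinned down by the fixed geometry on $\overline{W}^+$, the only contribution of $\eta$ enters through the cohomological pairing with $[\eta]\in H^2(M;\R)$, and no neck-length-dependent term survives.
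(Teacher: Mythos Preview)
Your overall strategy is exactly the paper's (both defer to \cite[Proposition~8]{KM94}): bound the drop of the perturbed Chern--Simons--Dirac functional across the neck uniformly in $T$, then run the pigeonhole/compactness/zero-energy argument to extract a translation-invariant limit in temporal gauge. The second and third steps of your sketch are fine.

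The gap is in your justification of the energy bound. You write that ``the endpoint values of the perturbed CSD functional are pinned down by the fixed geometry on $\overline{W}^+$,'' but this is not automatic when $b_1(M)>0$: the functional $\cL_\eta$ is \emph{not} gauge invariant, changing by $\langle (2\pi^2 c_1(S_M)-4\pi[\eta])\cup[u],[M]\rangle$ under $u:M\to S^1$. The uniform $C^0$ control on $\overline{W}^+$ coming from \cite{KM97} is only available \emph{after} applying a gauge transformation $u^T$, and the homotopy classes $[u^T|_{\{\pm T\}\times M}]\in H^1(M;\Z)$ may be nonzero and may vary with $T$. Your last paragraph gestures at a cohomological cancellation but does not supply one. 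The paper isolates this as its key \cref{bounded} and proves it by noting that $u^T$ extends over $W(T)$ with $u^T\to 1$ at the conical end, so $[u^T]$ lives in $H^1(W(T),\partial W(T);\Z)$; the pairing then restricts from $H^3(W(T),\partial W(T);\Z)$ and, by Poincar\'e--Lefschetz duality together with the Mayer--Vietoris connecting map $H_1(W(T);\Z)\to H_0(\{\pm T\}\times M;\Z)$, vanishes precisely because $M$ separates $W$. That separating hypothesis---part of the standing assumptions here---is what makes your first step go through, and it should appear explicitly in your argument.
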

\begin{proof}
The proof is essentially the same as that of \cite[Proposition 8]{KM94}.
Suppose we have $[A_T, \Phi_T] \in \mathcal{M}_{\eta}(T)$ for each sufficiently large $T$. 
Recall that for a pair $(B, \Psi)$ where $B$ is a $\Spinc$ connection and $\Psi$ is a Spinor of $\s|_M=(S_M, \rho_M)$, the $\eta$-perturbed Chern-Simons-Dirac functional is defined by
\[
\cL_\eta(B, \Psi)=-\frac{1}{8}\int_M ({B^t}-{B^t_0})\wedge (F_{B^t}+F_{B^t_0})+\int_M (B^t-B^t_0) \wedge (i\eta)+\frac{1}{2}\int_M \langle \Psi, D_B \Psi\rangle \dvol_M.
\]
Note that 
\[
\cL_\eta(u\cdot(B, \Psi))-\cL_\eta(B, \Psi)=\langle (2\pi^2 c_1 (S_M) - 4\pi[\eta] ) \cup [u], [M]\rangle , 
\]
where $u: M\to S^1$ is a gauge transformation and 
\[
[u]=\frac{1}{2\pi i} u^{-1}du 
\] 
is an element of $H^1(M; \Z)$ determined by $u$. 
The following boundedness result is a key lemma of our proof: 
\begin{lem}\label{bounded} Let $(A_T, \Phi_T)$ be an element in $\mathcal{M}_{\eta}(T)$ for each $T \geq 0$.
Then the difference 
\[
 \cL_\eta((A_T, \Phi_T)|_{\{-T\}\times M})- \cL_\eta((A_T, \Phi_T)|_{\{T\}\times M})
 \] 
 is bounded by a constant which is independent of $T$.
\end{lem}

\begin{proof}
Since the perturbation here is compactly supported, from \cite{KM97},  there is a gauge transformation $u^T$ on $W^+(T)$ such that $u^T \cdot (A_T, \Phi_T)-(A_0, \Phi_0)$, its first derivatives are uniformly bounded with respect to $T$, $d u^T \in L^2_{k, \alpha}(W^+(T))$ and $\lim_{s\to \infty} u(s, y)  =1 $ for any $y \in Y$.
Since $2\pi^2c_1(\s_M)-4\pi[\eta]$ is obtained as a restriction of a cohomology class on $W^+(T)$, 
we have
\[
\cL_\eta(u^T\cdot(A_T, \Phi_T)|_{\{\pm T\}\times M})-\cL_\eta((A_T, \Phi_T)|_{\{\pm T\}\times M}) 
\]
\[
=\langle (2\pi^2 c_1 (S_M) - 4\pi\eta ) \cup [u^T|_{\{\pm T\}\times M}], [M]\rangle .
\]
Now we prove 
\[
\langle (2\pi^2 c_1 (S_M) - 4\pi\eta ) \cup [u^T|_{\{\pm T\}\times M}], [M]\rangle=0 . 
\]
Note that since $u^T$ converges to $1$ on the end of $W^+(T)$, $[u_T]$ belongs to $H^1( W(T), \partial W(T); \Z)$. Thus, $(2\pi^2 c_1 (S_M) - 4\pi\eta ) \cup [u^T|_{\{\pm T\}\times M}]$ is equal to the restriction of the class $(2\pi^2 c_1 (S^+_{W(T)}) - 4\pi\hat{\eta} ) \cup [u^T] \in H^3 ( W(T), \partial W(T); \Z)$.  
So the following commutative diagram implies the desired equation: 
\[
  \begin{CD}
     H_1(W(T); \Z)  @>{\partial=0}>> H_0(\{T\} \times M; \Z) \\
  @V{PD}V{\cong}V    @V{PD}V{\cong}V \\
     H^3 ( W(T), \partial W(T); \Z)    @>>> H^3(\{T\} \times M; \Z)  . 
  \end{CD}
\]
(The case of $-T$ is similar.)
Here, the top row  
\[
\partial: H_1(W(T); \Z)\to H_0(\{T\} \times M; \Z)
\]
is the Mayer-Vietoris connecting homomorphism and it vanishes because of the assumption that $M$ separates $W$.
\end{proof}
Now, we consider a 1-parameter family of solutions $(A_T, \Phi_T)|_{[-T+1, T-1]\times M} $ for $T \geq 2$. 
Then, by applying the argument of the end of the proof of \cite[Proposition 8]{KM94}, we obtain a solution to the $\eta$-perturbed Seiberg--Witten equation on $\R \times M$ which is translation invariant in a temporal gauge. 
This completes the proof. 
\end{proof}

By applying \cref{existence}, we obtain a translation invariant solution to \eqref{SWcyl} in a temporal gauge. 
However, this is a contradiction to the following result. 
\begin{prop}\label{non-ex}
Let $M$ be a closed oriented 3-manifold with a positive scalar curvature metric and a non-exact closed 1-form $\eta$. Then, for any  $\Spinc$ structure $\s_M$ on $M$, we can find a real number $s$ such that  $s\eta$-perturbed version of the equations \eqref{SWcyl} have no translation-invariant solutions in temporal gauge.
\end{prop}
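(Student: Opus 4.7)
The plan is to reduce translation-invariant solutions on $\R\times M$ to a three-dimensional perturbed Seiberg--Witten system on $M$, use the Weitzenböck formula together with positive scalar curvature to force the spinor to vanish for small perturbation parameter, and then exploit a cohomological obstruction to eliminate solutions with vanishing spinor at a suitable choice of $s$.

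A translation-invariant pair $(A,\Phi)$ in temporal gauge on $\R\times M$ is the pullback of a pair $(B,\Psi)$ on $M$, where $B$ is a $\Spinc$ connection on $\s_M$ and $\Psi$ is a section of the spinor bundle. Under this reduction, the $s\eta$-perturbed version of \eqref{SWcyl} becomes
\[
\tfrac{1}{2}*F_{B^t} \;=\; \sigma(\Psi) + c_0\, s\, \eta, \qquad D_B\Psi \;=\; 0,
\]
where $\sigma(\Psi)$ is the usual quadratic term coming from $\rho^{-1}(\Psi\Psi^*)_0$ and $c_0$ is a nonzero constant determined by the identifications. Applying the three-dimensional Weitzenböck identity $D_B^2 = \nabla_B^*\nabla_B + \tfrac{1}{4}\mathrm{scal}_M + \tfrac{1}{2}\rho(F_{B^t})$, pairing with $\Psi$, integrating over $M$, and using the first equation to eliminate $\rho(F_{B^t})$ yields
\[
0 \;=\; \int_M \Bigl( |\nabla_B\Psi|^2 + \tfrac{1}{4}\mathrm{scal}_M\,|\Psi|^2 + c_1|\Psi|^4 + c_0\, s\, \langle \rho(\eta)\Psi,\Psi\rangle \Bigr)\,\dvol
\]
with $c_1>0$. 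The PSC hypothesis $\mathrm{scal}_M>0$, together with the pointwise bound $|\langle \rho(\eta)\Psi,\Psi\rangle|\leq C\|\eta\|_{L^\infty}|\Psi|^2$, produces a threshold $s_0>0$ depending only on the metric and $\eta$ such that $|s|<s_0$ forces $\Psi\equiv 0$.

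With $\Psi\equiv 0$, the curvature equation collapses to $\tfrac{1}{2}*F_{B^t} = c_0\, s\, \eta$, imposing at the de Rham level the linear constraint $[*F_{B^t}]=2c_0\, s\, [\eta]$ in $H^\bullet(M;\R)$. Since $[F_{B^t}]$ is the fixed topological class determined by $c_1(\s_M)$ and $[\eta]\neq 0$ by the non-exactness hypothesis, this linear equation in $s$ admits at most one real solution $s^*$ (which occurs only when $c_1(\s_M)$ and $[\eta]$ are linearly dependent) and no solution otherwise. Choosing any $s\in(0,s_0)$ with $s\neq s^*$ then produces a perturbation for which the $s\eta$-perturbed version of \eqref{SWcyl} has no translation-invariant solutions in temporal gauge.

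The only delicate point is checking that after the substitution of the first Seiberg--Witten equation into the Weitzenböck formula, the $|\Psi|^4$ term indeed appears with a strictly positive coefficient; this rests on the Clifford identity $\rho(\rho^{-1}(\Psi\Psi^*)_0)\Psi = \tfrac{1}{2}|\Psi|^2\Psi$ and is a standard calculation in three-dimensional Seiberg--Witten theory. Once this positivity is verified, the cohomological step is elementary linear algebra.
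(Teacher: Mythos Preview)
Your proof is correct and follows essentially the same approach as the paper: reduce to the three-dimensional equations, use the Weitzenb\"ock formula together with positive scalar curvature to force $\Psi\equiv 0$ for small $s$, and then rule out reducible solutions cohomologically using $[\eta]\neq 0$. The only notable difference is that you use the integrated Weitzenb\"ock identity while the paper applies a pointwise maximum-principle argument to $\Delta|\Psi|^2$; both are standard and yield the same conclusion.
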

\begin{proof}
Since $\eta$ is non-exact, there exists a positive real number $s_0>0$ such that for any real number $s$ satisfying $0<s\leq s_0$, 
\begin{equation}\label{c1eta}
c_1 (S_M) +\frac{2s}{\pi}[\eta] \neq 0 \in H^2(M; \R).
\end{equation}
Suppose there exists a translation  invariant solution $(A, \Phi)$ in temporal gauge for $(s\eta)$-perturbed version of \eqref{SWcyl}. 
Then, $(B, \Psi)=(A, \Phi)|_{\{t\}\times M}$ satisfies
\begin{equation}\label{3dimeq}
\begin{split}
\frac{1}{2}\rho(F_{B^t}-4is\eta)-(\Psi\Psi^*)_0&=0\\
D_B\Psi&=0.
\end{split}
\end{equation}
From the Weitenb\"ock formula, we obtain
\[
\begin{split}
\Delta |\Psi|^2
&=2\text{Re}\langle \Psi, \nabla^*_B \nabla_B \Psi\rangle-| \nabla_B \Psi|^2\\
&\leq 2\text{Re}\langle \Psi, \nabla^*_B \nabla_B \Psi\rangle\\
&=2\text{Re}\langle \Psi, D^2_B\Psi\rangle-\langle\Psi, \rho(F_{B^t})\Psi\rangle-\frac{\text{Scal}}{2}|\Psi|^2\\
&=-2\langle\Psi,  ((\Psi\Psi^*)_0+2s\rho(i\eta))\Psi\rangle-\frac{\text{Scal}}{2}|\Psi|^2\\
&=-|\Psi|^4 -4s \langle \Psi, \rho(i\eta)\Psi\rangle-\frac{\text{Scal}}{2}|\Psi|^2\\
&\leq -\left(|\Psi|^2+\frac{\text{Scal}}{2}-4s \|\eta\|_{C^0(M)}\right)|\Psi|^2.
\end{split}
\]
We conclude that  $\Psi\equiv 0$ on $M$ for $s$ sufficiently small, otherwise $|\Psi|^2$ achieves a local maximum at some point $p \in M$ and $\Delta |\Psi|^2(p)\geq 0$ holds, which implies
\[
0<|\Psi|^2(p)\leq -\frac{\text{Scal}(p)}{2}+4s \|\eta\|_{C^0(M)}
\]
and contradicts the positive scalar curvature assumption.
Thus the equation \eqref{3dimeq} implies
\[
F_{B^t}-4is\eta=0
\]
which implies
\[
\frac{i}{2\pi}F_{B^t}+\frac{2s}{\pi}\eta=0
\]
and that contradicts \eqref{c1eta}.
\end{proof}
This completes the proof of \cref{main2}(i). 
 
 \subsection{Proof of \cref{main2'}(iii)} 
 By considering blow-up, we can assume that a connected orientable embedded closed surface $\Sigma \subset W$ satisfies $g(\Sigma)>0$ and $[\Sigma]\cdot [\Sigma] =  0$. 
 Then we need to prove 
 \[
 | \langle c_1(S), [\Sigma] \rangle | \leq 2 (g (\Sigma) -1 ) . 
 \]
  
 We take a tubular neighborhood $\nu( \Sigma) = \Sigma \times D^2$ of $\Sigma$. Put $M := \partial  (\nu (\Sigma) ) \subset W$. First, as in the previous section, we fix data $\s_{W^+}$, $S^\pm_{W^+}$, $A_0$, $\Phi_0$, $\mathcal{U}_{W^+}$ and $\mathcal{V}_{W^+}$. Similarly, we consider the same thing $W^+(T)$, $\widehat{\mathcal{F}}_{W^+(T), \eta}:  \widehat{\cU}_{W^+(T)}\to \widehat{\cV}_{W^+(T)}$ and 
 \begin{align}\label{BF1}
 \begin{split}
f_T : B(\widehat{\cU}_{W^+(T), n_T}; R_T )/ S(\widehat{\cU}_{W^+(T), n_T}; R_T ) \\
\to  B(\widehat{\cV}_{W^+(T), n_T}; \varepsilon_T )/ S(\widehat{\cV}_{W^+(T), n_T}; \varepsilon_T ), 
 \end{split}
\end{align}
given in the previous subsection. By the assumption $\Psi(W, \xi, \s) \neq 0$, we can see that $\mathcal{M}_{\eta} (T) \neq \emptyset$. Using \cref{existence}, we obtain a translation invariant solution to \eqref{SWcyl} in a temporal gauge. Then one can use \cite[Lemma 9]{KM97} and obtain the conclusion. 
\qed

\section{Adjunction inequality for relative Bauer--Furuta invariant}\label{Adjunction2}
In this section, we prove an adjunction inequality for relative Bauer--Furuta invariant of 4-manifolds with homotopy L-space-bounday. 

\subsection{Seiberg--Witten Floer homotopy type and relative Bauer--Furuta invariant} 
In this subsection, we review Manolescu's construction of the Seiberg--Witten Floer homotopy type. For the details, see \cite{Man03}. 

Let $Y$ be a rational homology $3$-sphere equipped a $\Spinc$-structure $\s$ and $g$ a Riemann metric on $ Y$.
The spinor bundle with respect to $\s$ is denoted by $S$. 

The map $\rho : \Lambda^*_Y\otimes \C  \to \operatorname{End} (S) $ denotes the Clifford multiplication induced by $\s$. The notation $B_0$ denotes a fixed flat $\Spinc$-connection. 
Then the set of $\Spinc$ connections can be identified with $i\Om^1 (Y)$. 
The {\it configuration space} is defined by
\[
\Con_{k-\frac{1}{2}} (Y) := L^2_{k-\frac{1}{2}} (i\Lambda^1_Y) \oplus  L^2_{k-\frac{1}{2}} (S) ,
\]
here $L^2_{k-\frac{1}{2}}$ denotes the completion with respect to $L^2_{k-\frac{1}{2}}$-norm.

We have the {\it Chern-Simons-Dirac functional }
\begin{align}
\mathcal{L} : \Con_{k-\frac{1}{2}} (Y) \to \R 
\end{align}
given as
\[
\mathcal{L} ( b, \psi) = -\frac{1}{2} \int_Y b \wedge db + \frac{1}{2} \int_Y \langle \psi, D_{B_0+ b} \psi \rangle d \operatorname{vol} , 
\]
where $D_{B_0+ b}$ is the $\Spinc$-Dirac operator with respect to the $\Spinc$-connection $B_0+ b$.
The {\it gauge group} 
\[
\G_{k+\frac{1}{2}} (Y) :=\left\{  e^\xi  \middle| \xi \in  L^2_{k+\frac{1}{2}} (Y; i\R )\right\} 
\]
 acts on $\Con_{k-\frac{1}{2}} (Y)$ by
\[
u \cdot (b, \psi) := (b- u^{-1} d u, u \psi). 
\]

 Since the normalized gauge group 
\[
\G^0_{k+ \frac{1}{2} } (Y) :=\left\{  e^\xi  \in\G_{k+\frac{1}{2}} (Y)   \middle|  \int_Y \xi  d\operatorname{vol} =0 \right\}
\] 
freely acts on $\Con_{k-\frac{1}{2}} (Y)$, one can take a slice. The slice is given by 
\[
V_{k-\frac{1}{2}} (Y) :=\ker   \left(d^{*}: L^2_{k-\frac{1}{2}}(i\Lambda^1_Y) \to L^2_{k-\frac{3}{2}} (i\Lambda^0_Y) \right) \oplus L^2_{k-\frac{1}{2}} (S). 
\]
 The formal gradient field of the Chen-Simons Dirac functional with respect to a norm, introduced by Manolescu, is the sum 
 \[
 l+ c:  V_{k-\frac{1}{2}}(Y)\to V_{k-\frac{3}{2} } (Y),
 \]
  where 
  \[
  l(b, \psi) = (*db , D_{B_0} \psi)
  \]
   and 
   \[
   c(b, \psi) = (\pr_{\ker d^{*}}  \rho^{-1} ((\psi \psi^*)_0) , \rho (b) \psi- \xi (\psi) \psi). 
   \]
Here $\xi (\psi ) \in i \Om^0(Y)$ is determined by the conditions 
 \[
 d \xi (\psi) = ( 1- \pr_{\ker d^{*}} ) \circ \rho^{-1} ((\psi \psi^*)_0) \text{ and }  \ \int_Y \xi (\psi) =0. 
 \]
 Note that $l+c$ is $S^1$-equivariant, where the $S^1$-action is coming from 
 \[
 S^1 = \G_{k+\frac{1}{2}} (Y) /  \G^0_{k+\frac{1}{2}} (Y). 
 \]
 For a subset $I \subset \R$, a map $x= ( b, \psi) : I \to V_{k-\frac{1}{2}} (Y)$ is called a {\it Seiberg--Witten trajectory} if 
 \begin{align}\label{grad}
 \frac{\partial}{\partial t} x(t) = - (l+c) (x(t)). 
 \end{align}
 \begin{defn}
 A Seiberg--Witten trajectory $x = ( b, \psi) : I \to V_{k-\frac{1}{2}} (Y)$ is {\it finite type} if 
 \[
 \sup_{ t\in I} \|\psi(t) \|_{Y}  <\infty \text{ and }\sup_{ t\in I} |\mathcal{L}(x(t)) |<\infty. 
 \]
 \end{defn}
 We consider subspaces
 $V^\mu_\lambda(Y)$ defined as the direct sums of eigenspaces whose eigenvalues of $l$ are in $(\lambda, \mu]$ for $\lambda< 0 <\mu$ and denote $L^2$-projection from $V_{k-\frac{1}{2}} (Y)$ to $V^\mu_\lambda(Y) $ by $ p^\mu_\lambda$.
  Then the finite dimensional approximation of \eqref{grad} is given by 
  \begin{align}\label{flow}
  \frac{\partial}{\partial t} x(t) = - (l + p^\mu_\lambda c )(x(t)) , 
  \end{align}
  where $x$ is a map from $I \subset \R$  to $V^{\mu}_\lambda (Y)$. 
  Manolescu(\cite{Man03}) proved the following result: 
  \begin{thm}The following results hold. 
  \begin{itemize}
  \item There exists $R>0$ such that all finite type trajectories $x: \R \to V_{k-\frac{1}{2}}(Y)$ are contained in $\overset{\circ}{B}(R;V_{k-\frac{1}{2}}(Y) )$, where $\overset{\circ}{B}(R; V_{k-\frac{1}{2}}(Y))$ is the open ball with radius $R$ in $V_{k-\frac{1}{2}}(Y)$. 
  \item 
  For sufficiently large $ \mu$ and $-\lambda$ and the vector field 
  \[
  \beta (l + p^\mu_\lambda c )
  \]
   on $V^\mu_\lambda(Y)$, $\overset{\circ}{B}(2R;V^\mu_\lambda(Y)) $ is an isolating neighborhood, where $\beta$ is $S^1$-invariant bump function such that $\beta|_{\overset{\circ}{B}(3R)^c}=0$ and  $\beta|_{\overset{\circ}{B}(2R)}=1$.  
  \end{itemize}
  \end{thm}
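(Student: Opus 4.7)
The plan is to establish the two bullets in tandem, with (1) supplying the radius $R$ used in (2), and (2) following from (1) by a finite-dimensional approximation and compactness argument. Throughout I work in the Coulomb slice $V_{k-1/2}(Y)$, exploiting that $l$ is a self-adjoint first-order elliptic operator on $V(Y)$ and $c$ is a quadratic nonlinearity that is compact as a map $V_{k-1/2}(Y) \to V_{k-3/2}(Y)$ by Sobolev multiplication on the $3$-manifold $Y$.

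For part (1), I start from a finite-type trajectory $x(t)=(b(t),\psi(t))$ with $\sup_t \|\psi(t)\|_Y < \infty$ and $\sup_t |\mathcal{L}(x(t))| < \infty$. The first step is to upgrade the $L^2$-bound on $\psi$ to a uniform $C^0$-bound. Viewing the trajectory as a $4$-dimensional Seiberg--Witten solution on $\R\times Y$, the Weitzenb\"ock formula applied to $|\Phi|^2$ together with a maximum-principle argument produces $\|\psi(t)\|_{C^0(Y)} \le C(\operatorname{Scal}(Y))$ independent of $t$. With the spinor uniformly bounded, the bound on $\mathcal{L}$ controls $\int_Y b\wedge db$; combined with the slice condition $d^* b=0$ and the vanishing of $H^1(Y;\R)$ (since $Y$ is a rational homology sphere), Hodge theory on $Y$ yields a uniform $L^2_{1/2}$-bound on $b(t)$. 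Elliptic bootstrapping along the gradient-flow equation then lifts this to a uniform $L^2_{k-1/2}$-bound on all of $x(t)$, producing the desired $R$.

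For part (2), I argue by contradiction. Suppose $\overset{\circ}{B}(2R;V^\mu_\lambda(Y))$ fails to be isolating for arbitrarily large $\mu_n$ and $-\lambda_n$. Then there exist full-time trajectories $x_n:\R\to V^{\mu_n}_{\lambda_n}(Y)$ of $\beta(l + p^{\mu_n}_{\lambda_n} c)$ whose invariant parts meet $\partial\overset{\circ}{B}(2R)$. The cutoff $\beta$ forces $\|x_n(t)\|\le 3R$ uniformly in $n,t$, while compactness of $c$ implies $p^{\mu_n}_{\lambda_n}c \to c$ uniformly on bounded subsets of $V_{k-1/2}(Y)$. Standard parabolic estimates for the linear part $l$ plus a diagonal subsequence argument yield a subsequence of $x_n$ converging on compact subsets of $\R$ to a trajectory $x_\infty$ of the full flow $l+c$. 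The bound $\|x_\infty(t)\|\le 3R$ and the bounded oscillation of $\mathcal{L}$ along $x_n$ (which is essentially monotone up to terms that vanish as $\mu_n,-\lambda_n\to\infty$) pass to the limit, so $x_\infty$ is of finite type. But $x_\infty$ meets $\partial\overset{\circ}{B}(2R)$, contradicting (1).

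The main obstacle is making the compactness step in (2) rigorous, which is why Manolescu's framework is delicate. One must quantify the error between the truncated gradient field $l+p^\mu_\lambda c$ and the true field $l+c$ along approximating trajectories, using the compactness of $c$ together with a Gronwall estimate on compact time intervals, and ensure that the finite-type property (a bound on $\mathcal{L}$) survives the limit. A secondary subtlety is verifying that the $\beta$-damping introduces no new invariant behaviour inside $\overset{\circ}{B}(2R)$: one checks that $\beta\equiv 1$ on $\overset{\circ}{B}(2R)$, so any trajectory of the damped flow that stays inside $\overset{\circ}{B}(2R)$ is in fact a genuine trajectory of $l+p^\mu_\lambda c$, to which the above approximation argument applies.
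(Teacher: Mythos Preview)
The paper does not contain a proof of this theorem. It is stated in the review subsection on the Seiberg--Witten Floer homotopy type with the attribution ``Manolescu(\cite{Man03}) proved the following result,'' and the paper then immediately moves on to use it. There is therefore no ``paper's own proof'' against which to compare your attempt.

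That said, your sketch is broadly in line with Manolescu's original argument in \cite{Man03}: the a priori bound in (1) is obtained by interpreting a finite-type trajectory as a Seiberg--Witten solution on the cylinder, applying the Weitzenb\"ock/maximum-principle argument to bound the spinor pointwise, using the energy (difference of $\mathcal{L}$) and the rational homology sphere hypothesis to control the connection part, and then bootstrapping; the isolating-neighborhood statement in (2) is deduced by a contradiction/compactness argument showing that approximate trajectories on the finite-dimensional pieces limit to a finite-type trajectory of the full flow that escapes $\overset{\circ}{B}(R)$. One technical point you gloss over: in (1) the hypothesis is only $\sup_t \|\psi(t)\|_Y < \infty$ (an $L^2$-type bound) together with bounded $\mathcal{L}$, and upgrading this to a $C^0$ bound on the cylinder requires first controlling the analytic energy on finite cylinders from the $\mathcal{L}$-bound and then invoking the standard compactness/regularity for finite-energy monopoles, rather than a direct maximum-principle step. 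Manolescu handles this carefully; your sketch would need to insert that intermediate energy estimate to be complete.
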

  Then an $S^1$-equivariant Conley index $I^\mu_\lambda$ depending on $V^\mu_\lambda(Y) $, the flow \eqref{flow}, an isolating neighborhood $\overset{\circ}{B}(2R)$ and its isolated invariant set is defined.
   Then the {\it Seiberg--Witten Floer homotopy type} is defined by 
  \[
 SWF(Y, \s) :=  \Sigma^{-n(Y, \s, g) \C -V^0_\lambda}  I^\mu_\lambda, 
  \]
 as a stable homotopy type of a pointed $S^1$-space,  where $n(Y, \s, g)$ is given by 
  \[
  n(Y, \s, g) := \ind_\C^{APS} (D^+_A ) - \frac{c^2_1( \s_X )-\sigma(X)}{8} . 
  \]
   Here $(X, \s_X)$ is a compact $\Spinc$ bounding of $(Y, \s)$, the used Riemann metric of $X$ is product near the boundary, $\ind_\C^{APS} (D^+_A )$ is the Atiyah-Patodi-Singer index of the operator $D^+_A$ and a $\Spinc$ connection $A$ is a $\Spinc$ connection on $X$ which is an extension of $B_0$. 
  For the meaning of formal desuspensions, see \cite{Man03}.

  Next, we summarize the definition of the relative Bauer--Furuta invariant $BF(X, \s_X)$ following \cite{Khan15}, \cite{Man03} and \cite{Man07}. 
Let $X$ be a compact oriented Riemannian 4-manifold with $\partial X=Y$ is a rational homology 3-sphere.
Assume the collar neighborhood of $\partial X$ is isometric to the product.
Let $\s_X$ be a $\Spinc$ structure on $X$ and give $Y$ the $\Spinc$ structure $\s$ obtained by restricting $\s_X$ to $Y$.
We denote the spinor bundles of $\s_X$ by $S_X=S^+_X\oplus S^-_X$ and the spinor bundle of $\s$ by $S$. 
For simplicity, assume $b_1(X)=0$
\par
Let $\Omega^1_{CC}(X)$ be the space of 1-forms $a$ on $X$ in double Coulomb gauge.
The relative Bauer--Furuta invariant $BF(X, \s_X)$ arises as the finite-dimensional approximation of the Seiberg--Witten map
\begin{align}
\mathcal{F}^\lambda_X: L^2_{k}(i\Lambda^1_{X})_{CC}\oplus L^2_k(S^+_X)\to& L^2_{k-1}(i\Lambda^+_{X}\oplus S^-_X)\oplus V^\lambda_{-\infty} (Y)\\
(a, \phi)\mapsto& (d^+ a-\rho^{-1}(\phi\phi^*)_0, D^+_{A_0}\phi+\rho(a)\phi, p^\lambda_{-\infty}\circ  r(a, \phi))
\end{align}
for $\lambda \in \R$.
We will denote
\[
\cU_X=L^2_{k}(i\Lambda^1_{X})_{CC}\oplus L^2_k(S^+_X)\text{ and}\quad \cV_X=L^2_{k-1}(i\Lambda^+_{X}\oplus S^-_X). 
\]
We will also sometimes denote the map to the first two factors by $L_X+C_X$, 
where $L_X=d^++D^+_{A_0}+p^\lambda_{-\infty}r$ and $C_X$ is compact.
The finite-dimensional approximation goes as follows.
Pick an increasing sequence $\lambda_n\to \infty$ and an increasing sequence of finite-dimensional subspaces $\cV_{X, n} \subset \cV_X$ with $\pr_{\cV_{X, n}}\to 1$ pointwise.
Let 
\[
\cU_{X, n}=(L_X+p^{\lambda_n}_{-\infty}r)^{-1}(\cV_{X, n} \times V^{\lambda_n}_{-\lambda_n}) \subset \cU_X,
\]
 and 
\[
\cF_{X, n}:=    P_n \circ \cF^{\lambda_n}_X: \cU_{X, n}\to \cV_{X, n}\oplus V^{\lambda_n}_{-\lambda_n}, 
\]
where $P_n := \pr_{\cV_{X, n}} \times \pr_{V^{\lambda_n}_{-\lambda_n}} $.
Let 
\[
\wt{K}^1_{X, n}= (\cF_{X, n})^{-1}(\overline{B}(\cV_{X, n}; \varepsilon_n)\times  V^{\lambda_n}_{-\lambda_n}) \cap \overline{B}(\cU_{X, n}, R),
\]
\[
 \quad \wt{K}^2_{X, n}=(\cF_{X, n})^{-1}(\overline{B}(\cV_{X, n}; \varepsilon_n)\times  V^{\lambda_n}_{-\lambda_n})\cap S(\cU_{X, n}, R)
\]
\[
{K}^1_{X, n}=\pr_{V^{\lambda_n}_{-\lambda_n}}\circ \cF_{X, n}(\wt{K}^1_{X, n}), \quad K^2_{X, n}=\pr_{V^{\lambda_n}_{-\lambda_n}}\circ \cF_{X, n}(\wt{K}^2_{X, n})
\]
for some $R>0$.
One can find an $S^1$-equivariant index pair $(N_{X,n}, L_{X,n})$ which represents the Conley index for 
$V^{\lambda_n}_{-\lambda_n}$ in the form $N_{X, n}/L_{X, n}$ such that $K^1_{X, n}\subset N_{X, n}$ and $K^2_{X, n}\subset L_{X ,n  }$.
\par
Now, for a sufficiently large $n$, we have a map
\[
\mathcal{F}_{X, n}: \overline{B}(\cU_{X, n}, R) /S(\cU_{X, n}, R)\to  (\cV_{X, n}/(\overline{B}(\cV_{X, n}, \varepsilon)^c)) \wedge (N_{X, n}/L_{X, n}).
\]
This gives the relative Bauer--Furuta invariant $BF(X, \s_X)$ constructed by Khandhawit(\cite {Khan15}) and Manolescu(\cite{Man03}).
\par
We first state a non-vanishing result for the relative Bauer--Furuta invariant, which is a consequence of the non-vanishing result \cite[Corollary  4.3]{I19} of \eqref{iida} and gluing results \cite{Man07, KLS18II, IT20}. Before that recall a $\Spinc$ rational homology 3-sphere $(Y,\s)$ is a {\it (Floer) homotopy L-space} if 
\[
SWF(Y, \s) \cong (\C^\delta)^+
\]
for some $\delta  \in \Q$. It is known that if $Y$ is homotopy L-space for any $\Spinc$-structure, then $Y$ is an L-space 
\cite[Definition 5.1]{IT20}. 
For example, any rational homology 3-spheres with positive scalar curvature metric are homotopy L-spaces for any $\Spinc$ structures.
\begin{thm}\label{b=1}(\cref{b=1'})
Let $(W, \om)$ be a weak symplectic filling of a contact 3-manifold $(Y,\xi)$. Suppose $(Y, \s_\xi)$ is a homotopy L-space. 
We also consider a closed $\Spinc$ 4-manifold $(X, \s_X)$ with $b_1(X)=0$ such that the $S^1$-equivariant Bauer--Furuta invariant $BF(X, \s_X)$ is not $S^1$-null-homotopic. 
Then,  the relative Bauer--Furuta invariant 
\[
BF (W\# X, \s_{\om} \# \s_X):  (\R^{-b^+_2(W\# X)}\oplus \C^{\frac{c^2_1(\s_{\om} \# \s)-\sigma(W\# X)}{8}})^+\to SWF(Y, \s_\xi)
\] 
is equal to $BF(X, \s_X)$ and in particular does not vanish as an $S^1$-equivariant stable homotopy class.
 
\end{thm}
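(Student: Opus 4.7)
The plan is to reduce the claim to the gluing identity proven in \cite[Theorem 1.2]{IT20}, which for a compact Spin$^c$ 4-manifold $(Z, \s_Z)$ with contact boundary $(Y, \xi)$ and $b_3(Z)=0$ expresses the relative Bauer--Furuta invariant of $Z$ as a composition involving the first author's invariant $\Psi(Z, \xi, \s_Z)$ and the Floer homotopy contact invariant $\Psi(Y, \xi) \in \{(\R^M)^+, SWF(-Y, \s_\xi)\}$ of \cite[Theorem 1.1]{IT20}. Schematically, this gluing formula takes the form
$$BF(Z, \s_Z) \;\simeq\; \Psi(Y, \xi) \wedge \Psi(Z, \xi, \s_Z)$$
as $S^1$-equivariant stable maps with target $SWF(Y, \s_\xi)$, where the $S^1$-action is trivial on the factor carrying $\Psi(Z, \xi, \s_Z)$.

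First, I would apply a connected-sum decomposition in the relative Bauer--Furuta setting, analogous to \cref{conn sum} but one level up: for a closed 4-manifold $X$ with $b_1(X)=0$ connect-summed to $W$ at an interior ball, $BF(W\# X, \s_\om \# \s_X)$ splits as the smash of $BF(W, \s_\om)$ with the absolute Bauer--Furuta invariant $BF(X, \s_X)$, up to the evident canonical identifications of spheres and up to sign. Applying the gluing formula of \cite[Theorem 1.2]{IT20} to the weak filling $W$ then expresses $BF(W, \s_\om)$ as $\Psi(Y, \xi) \wedge \Psi(W, \xi, \s_\om)$, so that
$$BF(W\# X, \s_\om \# \s_X) \;\simeq\; \Psi(Y, \xi) \wedge \Psi(W, \xi, \s_\om) \wedge BF(X, \s_X).$$

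Second, I would invoke \cite[Corollary 4.3]{I19}, which gives $\Psi(W, \xi, \s_\om) = \pm \id$ for any weak symplectic filling $(W, \om)$ with $b_3(W) = 0$. This collapses the display above to
$$BF(W\# X, \s_\om \# \s_X) \;\simeq\; \pm\, \Psi(Y, \xi) \wedge BF(X, \s_X).$$
Since $(Y, \s_\xi)$ is a homotopy L-space, $SWF(Y, \s_\xi)$ is $S^1$-equivariantly stably equivalent to $(\C^\delta)^+$ for some $\delta$, so both source and target of $\Psi(Y, \xi)$ are representation spheres up to formal desuspension. The contact invariant $\Psi(Y, \xi)$ is non-trivial (it is the homotopy contact invariant coming from an actual fillable contact structure), hence in the L-space case it is an $S^1$-equivariant stable equivalence onto its image. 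Under this identification of $SWF(Y, \s_\xi)$ with a sphere, the smash $\Psi(Y, \xi) \wedge BF(X, \s_X)$ becomes identified with $BF(X, \s_X)$ itself, which is non-null-homotopic by hypothesis.

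The main obstacle will be bookkeeping: precisely matching the suspension indices and $S^1$-equivariance data on the two sides of the gluing identity (\cite[Theorem 1.2]{IT20} is formulated for a filling, and one must check it applies unchanged to the connected sum $W\# X$), together with verifying that non-triviality of $\Psi(Y, \xi)$ in the homotopy L-space setting really does promote it to a stable equivalence after the canonical identification $SWF(Y, \s_\xi) \cong (\C^\delta)^+$. Once these compatibilities are pinned down, the equality $BF(W\# X, \s_\om \# \s_X) = BF(X, \s_X)$ and its non-vanishing both follow immediately.
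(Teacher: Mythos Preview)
Your overall strategy matches the paper's: reduce to a connected-sum/gluing identity $BF(W\#X)\simeq BF(W,\s_\om)\wedge BF(X,\s_X)$ (the paper writes it as a composition, citing \cite{B04,Man07}), and then show that $BF(W,\s_\om)$ is an $S^1$-equivariant stable equivalence so that the smash with $BF(X,\s_X)$ is just $BF(X,\s_X)$ itself.

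The gap is in your last step. You assert that because $\Psi(Y,\xi)$ is non-trivial and, in the homotopy L-space case, maps between representation spheres, it is automatically an $S^1$-equivariant stable equivalence. Non-triviality alone does not give this; a priori the map could have degree not equal to $\pm 1$, or be a non-equivariant equivalence that fails to be an $S^1$-equivalence. The paper closes this gap differently, working directly with $BF(W,\s_\om)$ rather than with $\Psi(Y,\xi)$: first, \cref{hom} (from \cite[Lemma~5.20]{IT20}) gives that the \emph{non-equivariant} $BF(W,\s_\om):S^0\to S^0$ is a homotopy equivalence, and from this one deduces $b_2^+(W)=0$ via \cite[Theorem~5.3]{IT20}. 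Then \cref{hom1} upgrades this to an $S^1$-equivariant equivalence using two facts: the $S^1$-fixed part $BF(W,\s_\om)^{S^1}:(\R^m)^+\to(\R^{m+b_2^+(W)})^+$ is always induced by a linear inclusion, hence is an equivalence once $b_2^+(W)=0$; and a result of tom Dieck \cite{TD87} (as used in \cite{B04}) says that an $S^1$-map between representation spheres whose fixed-point map is an equivalence is itself an $S^1$-equivalence. Your sketch does not supply any of these ingredients, and the ``main obstacle'' you flag is precisely where the actual work lies.
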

The proof of this result uses Seiberg--Witten Floer homotopy contact invariant introduced in \cite{IT20}.

\subsection{Proof of \cref{b=1} }
In order to prove non-vanishing result in \cref{b=1}, we need to introduce Seiberg--Witten Floer homotopy contact invariant
\begin{align}
    \Psi(Y, \xi):  S^0 \to SWF^{\frac{1}{2}-d_3(-Y; \xi) } (Y, \s_\xi) 
\end{align}
for a contact 3-manifold $(Y, \xi)$ with $b_1(X)=0$. 
For this invariant, the following lemma is proved: 
\begin{lem}\cite[Lemma 5.20]{IT20}\label{hom}
Let $Y$ be a rational homology $3$-sphere with a positive scalar curvature $g_Y$ and a fillable contact structure $\xi$ whose filling satisfies $b_3=0$.
Then 
$\Sigma^{(\frac{1}{2} - d_3 (-Y, \xi))  \R  } SWF(-Y, \s_{\xi})$ is stably homotopy equivalent to $S^0$ and  
\[
\Psi (Y, \xi ) : S^0 \to S^0 
\]
is a homotopy equivalence. (The map $\Psi (Y, \xi )$ is a generator of $\pi_0^S$.) Moreover, for a symplectic filling $(W, \om)$ of $(W, \xi)$, the non-equivariant relative Bauer--Furuta invariant of $(W, \s_{\om})$
\[
BF(W, \s_{\om}) : S^0 \to S^0
\]
 is also a homotopy equivalence.
\end{lem}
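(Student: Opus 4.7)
The plan is to combine a positive scalar curvature (PSC) vanishing argument for finite type Chern--Simons--Dirac trajectories, already used in the proof of \cref{non-ex}, with the gluing theorem \cite[Theorem 1.2]{IT20} relating Iida's invariant, the Seiberg--Witten Floer homotopy contact invariant, and the relative Bauer--Furuta invariant.

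First I would identify $SWF(-Y,\s_\xi)$ as a sphere. The Weitzenb\"ock formula as in the proof of \cref{non-ex} forces the spinor component of any finite type trajectory on $-Y$ to vanish identically, because $g_Y$ has PSC; since $-Y$ is a rational homology sphere, the set of reducible solutions is a single gauge orbit, and the Dirac operator $D_{B_0}$ has spectrum bounded away from zero. Hence in Manolescu's finite-dimensional approximation the isolated invariant set inside $\overset{\circ}{B}(2R;V^\mu_\lambda(-Y))$ is the origin alone, whose $S^1$-equivariant Conley index is the one-point compactification of the unstable complex summand of $V^\mu_0$. After the formal desuspension by $n(-Y,\s_\xi,g_Y)\C + V^0_\lambda$ this yields
\[
SWF(-Y,\s_\xi)\;\simeq\;(\C^{\delta})^+
\]
for some $\delta\in\Q$. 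Comparing the index-theoretic formula for $n(-Y,\s_\xi,g_Y)$ with the definition of $d_3$ via the canonical $\Spinc$-structure of $\xi$ gives the relation $2\delta+\tfrac12-d_3(-Y,\xi)=0$, which is exactly the first assertion.

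Next I would show the contact invariant and the filling's relative Bauer--Furuta invariant are homotopy equivalences. Applying \cite[Theorem 1.2]{IT20} to any weak symplectic filling $(W,\om)$ of $(Y,\xi)$ with $b_3(W)=0$ (provided by the hypothesis), one obtains a factorization
\[
BF(W,\s_\omega)\;\simeq\;\Psi(Y,\xi)\wedge \Psi(W,\xi,\s_\omega)
\]
up to the standard suspension identifications. Iida's non-vanishing \cite[Corollary 4.3]{I19} gives $\Psi(W,\xi,\s_\omega)=\pm\id$, so $BF(W,\s_\omega)\simeq\pm\,\Psi(Y,\xi)$ as stable maps $S^0\to S^0$. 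It remains to check that the integer $\Psi(Y,\xi)\in\pi_0^S\cong\Z$ is $\pm1$: cap $W$ off by any symplectic cap $Z$ \cite{ehcap} to produce a closed symplectic 4-manifold $X=W\cup Z$, and use Taubes' non-vanishing of $SW$ for closed symplectic manifolds together with the connected-sum/gluing formula to conclude that the factor of the closed-manifold class coming from $Y$ must be a unit. Hence $\Psi(Y,\xi)$ generates $\pi_0^S$ and $BF(W,\s_\om)$ is a homotopy equivalence $S^0\to S^0$.

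The main obstacle is the grading bookkeeping in the sphere identification: matching $\delta$ with $\tfrac12-d_3(-Y,\xi)$ requires tracking the sign conventions for the Seiberg--Witten Floer homotopy contact invariant, the Atiyah--Patodi--Singer correction in $n(Y,\s,g)$, and the definition of $d_3$ simultaneously, all in a form compatible with the conventions of \cite{Man03,Khan15}. A secondary but nontrivial technical point is that the gluing formula of \cite[Theorem 1.2]{IT20} must be applied with compatible conventions across the weighted Sobolev/conical-end setup used in the definition of $\Psi(W,\xi,\s_\omega)$ and the Coulomb-gauge/Manolescu setup used in the definition of $BF(W,\s_\omega)$, so that the wedge factorization above holds on the nose rather than up to an undetermined suspension.
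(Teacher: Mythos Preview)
The paper does not prove this lemma; it is quoted verbatim from \cite[Lemma~5.20]{IT20}, so there is no in-paper argument to compare against. That said, the paper does remark that ``in the proof of \cite[Lemma~5.20]{IT20}, we only use the fact $SWF(Y,\s)=(\C^\delta)^+$,'' which confirms that your first step (PSC forces the Floer homotopy type to be a sphere) is indeed the starting point, and your overall list of ingredients --- the sphere identification, the gluing theorem \cite[Theorem~1.2]{IT20}, and Iida's non-vanishing \cite[Corollary~4.3]{I19} --- is correct.

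The problem is the direction in which you apply the gluing, and the detour it forces. The gluing theorem \cite[Theorem~1.2]{IT20} expresses \emph{Iida's invariant} $\Psi(W,\xi,\s_\omega)$ as the duality pairing of $BF(W,\s_\omega)$ with the contact invariant $\Psi(Y,\xi)$, not $BF(W,\s_\omega)$ in terms of the other two as you wrote (indeed, your displayed formula does not typecheck before the sphere identification: the left side targets $SWF(Y)$ while the right side targets $SWF(-Y)$). Once $SWF(\pm Y,\s_\xi)\simeq S^0$, the pairing becomes multiplication in $\pi_0^S\cong\Z$, and Iida's $\Psi(W,\xi,\s_\omega)=\pm 1$ reads
\[
BF(W,\s_\omega)\cdot\Psi(Y,\xi)=\pm 1,
\]
so both factors are units in $\Z$ immediately. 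No capping, no Taubes. Your capping argument is also incomplete as written: an Etnyre--Honda cap $Z$ need not have $b_1=0$, Taubes' non-vanishing requires $b_2^+(W\cup Z)\geq 2$, and even granting these you would still need the full Bauer--Furuta gluing across $Y$ (not a connected-sum formula) together with a separate argument that the factor contributed by $Z$ is a unit. With the gluing oriented correctly this entire branch of the argument is unnecessary.
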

Note that, in the proof of \cite[Lemma 5.20]{IT20}, we only use the fact 
\[
SWF(Y, \s) = (\C^\delta)^+
\]
for a rational number $\delta \in \Q$.
We improve \cref{hom} as the following result: 
\begin{lem} \label{hom1}
Let $(Y, \xi)$ be a contact rational homology $3$-sphere with the condition 
\[
SWF(Y, \s_\xi) \cong (\C^\delta)^+
\]
for some $\delta \in \Q$ and $(W, \om)$ be a symplectic filling of $(Y, \xi)$ and $b_3(W)=0$.

Then, $BF(W, \s_{\om})$ is an $S^1$-equivariant homotopy equivalence. 
\end{lem}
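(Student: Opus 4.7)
The plan is to follow the strategy of \cite[Lemma 5.20]{IT20} (our \cref{hom}), whose proof only relies on the structural identity $SWF(Y,\s)\cong(\C^\delta)^+$ rather than the full PSC hypothesis, but to keep careful track of the $S^1$-equivariance throughout. The two main inputs are the gluing identity from \cite[Theorem 1.2]{IT20} relating $\Psi(W,\xi,\s_\om)$ to $BF(W,\s_\om)$ and the Floer homotopy contact invariant $\Psi(Y,\xi)$, together with the non-vanishing result $\Psi(W,\xi,\s_\om) = \pm\id$ from \cite[Corollary 4.3]{I19}.

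First, I would invoke \cite[Corollary 4.3]{I19}: since $(W,\om)$ is a symplectic filling with $b_3(W)=0$, the class $\Psi(W,\xi,\s_\om)=\pm\id$ as a non-equivariant stable homotopy class. Then the gluing formula \cite[Theorem 1.2]{IT20} expresses $\Psi(W,\xi,\s_\om)$ as the Spanier--Whitehead pairing of $BF(W,\s_\om)$ with $\Psi(Y,\xi)$; schematically,
\[
\Psi(W,\xi,\s_\om) \simeq \Psi(Y,\xi)^{\vee}\circ BF(W,\s_\om),
\]
where $\Psi(Y,\xi)^{\vee}$ denotes the Spanier--Whitehead dual of $\Psi(Y,\xi)$, which makes sense because Spanier--Whitehead duality identifies $SWF(-Y,\s_\xi)$ with the stable dual of $SWF(Y,\s_\xi)$. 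Under the hypothesis $SWF(Y,\s_\xi)\cong(\C^\delta)^+$, both this space and its dual are complex $S^1$-representation spheres, and the maps $\Psi(Y,\xi)$ and $BF(W,\s_\om)$ lift naturally to the $S^1$-equivariant stable category.

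Combining these, the identity $\pm\id \simeq \Psi(Y,\xi)^{\vee}\circ BF(W,\s_\om)$ shows that $BF(W,\s_\om)$ admits a stable one-sided inverse non-equivariantly. By the index computation underlying the definitions of $BF$ and $SWF$ together with the hypothesis on $SWF(Y,\s_\xi)$, the source of $BF(W,\s_\om)$ becomes, after formal desuspension, $S^1$-equivariantly equivalent to a complex representation sphere of the same virtual dimension as the target $(\C^\delta)^+$. The last step is then to upgrade the one-sided inverse to an $S^1$-equivariant homotopy equivalence, using the standard fact that an $S^1$-equivariant self-map of a complex representation sphere of non-equivariant degree $\pm 1$ is automatically an $S^1$-equivariant stable homotopy equivalence: the only $S^1$-fixed points of $(\C^n)^+$ are $\{0,\infty\}$, and these are necessarily matched under a degree-$\pm 1$ self-map. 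The main obstacle will be the careful bookkeeping of the formal desuspensions in the definitions of $BF$ and $SWF$ and verifying that the gluing identity of \cite[Theorem 1.2]{IT20} admits an $S^1$-equivariant enhancement with the correct equivariant twists.
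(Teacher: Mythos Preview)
Your overall strategy is close to the paper's, but there is a genuine gap at the step where you assert that ``the source of $BF(W,\s_\om)$ becomes \dots\ $S^1$-equivariantly equivalent to a complex representation sphere of the same virtual dimension as the target.'' Recall that the source of $BF(W,\s_\om)$ is $(\R^{-b^+_2(W)}\oplus\C^{c})^+$; the $S^1$-trivial summand $\R^{-b^+_2(W)}$ vanishes precisely when $b^+_2(W)=0$. The index/dimension count only tells you that the \emph{total} virtual dimensions of source and target agree (this is forced once you know $BF(W,\s_\om)$ is non-equivariantly a map $S^0\to S^0$ of degree $\pm1$); it does \emph{not} tell you that the trivial summand is zero. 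Without $b^+_2(W)=0$ your ``standard fact'' about degree-$\pm1$ self-maps of complex representation spheres does not apply, since the map is then between genuinely different $S^1$-representation spheres. The paper supplies this missing ingredient by invoking \cite[Theorem~5.3]{IT20} to conclude $b^+_2(W)=0$ from the non-equivariant equivalence.

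There is also a structural difference worth noting. You propose to track $S^1$-equivariance through the gluing identity of \cite[Theorem~1.2]{IT20}, and you flag this as the main obstacle. The paper sidesteps this entirely: the gluing and the identity $\Psi(W,\xi,\s_\om)=\pm\id$ are used only non-equivariantly (indeed $\Psi(W,\xi,\s_\om)$ and $\Psi(Y,\xi)$ carry no $S^1$-action, so there is nothing equivariant to track there). Instead, once $b^+_2(W)=0$ is known, the paper observes that the $S^1$-fixed part $BF(W,\s_\om)^{S^1}:(\R^m)^+\to(\R^{m+b^+_2(W)})^+$ is always induced by a linear inclusion \cite{Man03,Ma14}, hence here a homotopy equivalence; then the tom Dieck/Bauer result (\cite{TD87}, \cite[Lemma~3.8]{BF04}) upgrades this to an $S^1$-equivariant equivalence. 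This route avoids any need for an equivariant enhancement of the gluing.
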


\begin{proof}
Suppose $Y$ admits a fillable contact structure $\xi$ with a filling $(W,\om)$. We set $b_1(W)=0$. 
Then, \cref{hom} implies the Bauer--Furuta invariant
\[
BF(W, \s_{\om}) : S^0 \to S^0
\]
is a homotopy equivalence as non-equivariant stable homotopy classes. By \cite[Theorem 5.3]{IT20}, we can conclude that $b^+(W)=0$.

In general, the restriction of the relative Bauer--Furuta invariant $BF(W, \s_\om)$ of connected compact $\Spinc$ 4-manifold $(W, \s_\om)$ with $b_1(W)=0$ and $b_1(\partial W)=0$ to the $S^1$-invariant part
\[
BF(W, \s_\om)^{S^1}: (\R^m)^+  \to (\R^{m+ b^+(W)})^+
\]
is coming from linear inclusion \cite{Man03, Ma14}. 
In our situation, $BF(W, \s_{\om})^{S^1}$ is a homotopy equivalence since we have $b^+(W)=0$. 
It is proved in \cite{TD87} (used in \cite{B04}) that such an $S^1$-equivariant map is $S^1$-homotopic to the identity. 
So, we can conclude that $BF(W, \s_{\om})$ is an $S^1$-equivariant homotopy equivalence. 
\end{proof}
Now, we give a proof of \cref{b=1}. 
By \cref{hom1}, $BF(W, \s_{\om})$ is an $S^1$-equivariant homotopy equivalence. 
By applying the gluing formula for relative Bauer--Furuta invariants \cite{B04, Man07}, we can conclude that
\[
\Psi (W\# X, \s_{\om} \# \s_X) = \Psi(W, \s_{\om}) \circ  BF (X , \s_X) . 
\]
Here we actually used the following computations as $S^1$-equivariant maps: 
\begin{itemize}
    \item $BF(D^4) = \id$ (This is a special case of \cref{hom1}), 
    \item $BF(X^o, \s|_{X^o} ) =BF(X, \s)$, where $X^o$ is $X \setminus \operatorname{int} D^4$, and
    \item $BF(X_1 \# X_2, \s_1 \# \s_2 ) =BF(X_1^o, \s_1|_{X_1^o} ) \circ BF(X_2^o, \s_2|_{X_2^o} )$. 
\end{itemize}
This completes the proof of \cref{b=1}. 
Since $BF(W, \s_{\om})$ is an $S^1$-equivariant homotopy equivalence and $\Psi (X , \s)$ is not null homotopic as an $S^1$-equivariant map, $\Psi (W\# X, \s_{\om} \# \s)$ is not null homotopic as an $S^1$-equivariant map. 
This completes the proof of \cref{b=1}.

\subsection{Adjunction inequality for relative Bauer--Furuta invariant}
In the proof of \cref{cap}, we use adjunction inequality for relative Bauer--Furuta invariant proven in this subsection. 
As in the case of \cref{main2'}, we can see that (ii) follows from (i). 
The proof is essentially the same as in the proof of \cref{main2'}. 
\begin{thm} \label{main4'}
Let $(W, \s_W)$ be an oriented Spin$^c$ compact 4-manifold whose boundary is a $\Spinc$ rational homology 3-sphere $(Y,\s_Y)$ with $b_1(W)=0$. Suppose $(Y, \s_Y)$ is a homotopy L-space. 
If the relative Bauer--Furuta invariant satisfies $BF (W, \s) \neq 0$ as an $S^1$-equivariant stable homotopy class, then the following results hold: 
\begin{itemize}
\item[(i)]
There are no closed oriented 3-dimensional submanifolds $M \subset \operatorname{int}(W)$ satisfying the following three conditions.
\begin{enumerate}
\item
$M$ admits a Riemann metric with positive scalar curvature.
\item
$H^2(W, \partial W ; \R)\to H^2(M; \R)$ is non-zero.
\item $M$ separates $W$.
\end{enumerate}
\item[(ii)] Non-torsion homology class in $ H_2(W,\partial W; \Z)$ cannot be realized by an embedded 2-sphere whose self-intersection number is non-negative. 
\item[(iii)] For any connected, orientable, embedded, closed surface $\Sigma \subset W$ with $g(\Sigma)>0$ and $[\Sigma]\cdot [\Sigma] \geq 0$, we have 
\[
| \langle c_1 (\s_W) , [\Sigma] \rangle | + [\Sigma]\cdot [\Sigma]\leq 2 g(\Sigma) - 2. 
\]
\end{itemize}
\end{thm}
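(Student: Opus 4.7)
The plan is to mirror the argument in \cref{main2'}, replacing the Bauer--Furuta refinement of the Kronheimer--Mrowka invariant with the relative Bauer--Furuta invariant. Since $(Y, \s_Y)$ is a homotopy L-space, $SWF(Y, \s_Y) \cong (\C^\delta)^+$ is $S^1$-equivariantly a sphere, so the non-vanishing hypothesis on $BF(W, \s_W)$ plays the same role as non-vanishing of $\Psi(W, \xi, \s)$. I will again work by contradiction: assume such an $M$ exists, stretch the neck along $M$, obtain a sequence of approximate $\eta$-perturbed solutions, extract a translation-invariant Seiberg--Witten trajectory on $\R \times M$ in temporal gauge, and derive a contradiction via the positive scalar curvature argument of \cref{non-ex}.

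The setup is parallel to \cref{main2'}(i). Fix a separating codimension-$1$ submanifold $M \subset \mathrm{int}(W)$ with PSC metric, and let $\eta$ be a non-exact closed $2$-form on $M$ obtained (by hypothesis (2)) as a restriction of a class in $H^2(W, \partial W; \R)$; extend it to a compactly supported $2$-form $\hat\eta$ on $W$. Form $W(T)$ by inserting $[-T,T] \times M$, and consider the $\eta$-perturbed Seiberg--Witten map as in \eqref{FX+1}, but now in Khandhawit's double-Coulomb Sobolev framework producing finite-dimensional approximations
\[
f_T : B(\cU_{W(T), n_T}; R)/S(\cU_{W(T), n_T}; R) \to \bigl(\cV_{W(T), n_T}/B(\cV_{W(T), n_T};\varepsilon)^c\bigr) \wedge (N_{Y, n_T}/L_{Y, n_T}).
\]
The first step is to show that if $BF(W, \s_W)$ is not $S^1$-equivariantly null, then for every $T \geq 0$ the $\eta$-perturbed moduli space $\calM_\eta(T)$ on $W(T)$ is non-empty. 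This is the analogue of \cref{non-empty}: since $\eta$ is compactly supported in the interior, the perturbation does not affect the finite-dimensional approximation on the $Y$-side Conley index factor, and the perturbed $f_T$ represents the same $S^1$-equivariant stable homotopy class as $BF(W,\s_W)$; surjectivity of its non-trivial part, together with the compactness of the quadratic term $C_{W(T)}$ on bounded sets, yields a weak limit in $\calM_\eta(T)$.

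The main obstacle is adapting \cref{existence} and especially \cref{bounded}: we must show that the Chern--Simons--Dirac values of $(A_T, \Phi_T)|_{\{\pm T\} \times M}$ differ by an amount bounded independently of $T$, so that after translating we extract a translation-invariant solution on $\R \times M$. In the closed-boundary setting, the required gauge transformation $u^T$ no longer needs to decay along a conical end, but instead must be chosen to respect the double Coulomb slice on the rational homology sphere boundary $Y$; because $b_1(Y) = 0$ and $b_1(W) = 0$, one can pick $u^T$ with $[u^T] \in H^1(W(T), \partial W(T); \Z)$ in the same way, and then the separating hypothesis on $M$ makes the Mayer--Vietoris connecting map $H_1(W(T);\Z) \to H_0(\{\pm T\}\times M;\Z)$ vanish, exactly as in \cref{bounded}. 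The only new technical input is a uniform bound on $u^T \cdot (A_T, \Phi_T) - (A_0, \Phi_0)$ in $C^1$ near $M$, which follows from the standard Seiberg--Witten a priori estimates once one fixes a bound on the CSD functional on the $Y$-end using the homotopy L-space structure (all finite type trajectories lie in $\overline{B}(R)$). Once the bounded difference is in hand, a diagonal sequence argument produces the translation invariant solution and \cref{non-ex} gives the contradiction.

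For part (iii), blow up until $[\Si]\cdot[\Si] = 0$, take $M = \del \nu(\Si) = \Si \times S^1$, apply the version of \cref{existence} just established to produce a translation-invariant $\eta$-perturbed solution on $\R \times (\Si \times S^1)$, and then invoke \cite[Lemma 9]{KM97} (or equivalently the Kronheimer--Mrowka--Fr\o yshov calculation on $\Si \times S^1$) to get $|\langle c_1(\s_W), [\Si]\rangle| \leq 2g(\Si) - 2$. Part (ii) is immediate from (i) by taking $M$ to be the unit normal $S^2$-bundle of the embedded sphere, since such $M \cong S^2 \times S^1$ admits PSC, separates when the self-intersection is non-negative (after a small isotopy making the sphere have trivial normal bundle via blow-ups if needed), and $H^2(W, \del W; \R) \to H^2(M; \R)$ is non-zero precisely because $[\Si]$ is non-torsion.
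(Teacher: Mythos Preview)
Your proposal is correct and follows essentially the same route as the paper: neck-stretch along $M$, use non-vanishing of $BF(W,\s_W)$ together with the homotopy L-space hypothesis to produce approximate solutions for every $T$, extract a translation-invariant solution on $\R\times M$ via the bounded-CSD argument (with the same Mayer--Vietoris vanishing from the separating hypothesis), and contradict \cref{non-ex}; parts (ii) and (iii) are deduced exactly as you indicate. The one place where the paper is slightly more explicit is the step you phrase as ``surjectivity of its non-trivial part'': since the target of $f_T$ is $\cV_{W(T),n}^+\wedge (N/L)$ rather than a sphere, the paper isolates a short lemma showing that if $\mathcal{I}\simeq_{S^1} W^+$ and $f^{-1}(\{0\}\times\mathcal{I})=\emptyset$ then $f$ is $S^1$-null-homotopic, which is precisely where the homotopy L-space condition enters; your argument via composing with the equivalence $h$ amounts to the contrapositive of this lemma.
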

\begin{rem}
\cref{main4'}(ii) also follows from \cite[Corollary 1.9]{KLS18II}. 
\end{rem}


\begin{proof}[Proof of \cref{main4'}(i)]
We fix a Riemann metric on $W$ which is product near the boundary $\partial W=Y$.

In addition, suppose there is an embedded codimension-1 manifold  $M \subset \text{int}(W)$. 
We set $\overline{W}  :=W \setminus \overset{\circ}{\nu}(M)  $, where $\overset{\circ}{\nu} (M)$ is an even smaller open tubular neighborhood of $  M$. Then $\partial \overline{W}  = M \cup (-M)$.

Let $T$ be a non-negative real number and $W(T)$ be a Riemannian manifold obtained from $W$ by inserting a neck $[-T, T]\times M$ with a product metric: 
\[
W(T)=  [-T,  T]\times M \cup_{ M \cup (-M) } \overline{W}  
\]
such that the restriction of the metric of $W(T)$ on $\overline{W} $ coincides with a given metric. 
For each $T$, we have the $\eta$-perturbed Seiberg--Witten map 
\begin{align}
\begin{split}
\mathcal{F}^\lambda_{W(T), \eta} : \cU_{W(T)} \to \cV_{W(T)}  \oplus V^\lambda_{-\infty} (Y)\\
(a, \phi)\mapsto (d^+ a-\rho^{-1}(\phi\phi^*)_0 + \eta, D^+_{A_0}\phi+\rho(a)\phi, p^\lambda_{-\infty}\circ  r(a, \phi))
\end{split}
\end{align}
for a fixed closed two form $\eta$ on $W(T)$ such that $\supp \eta \subset \operatorname{int}W(T)$.

Pick an increasing sequence $\lambda_n\to \infty$ and an increasing sequence of finite-dimensional subspaces $\cV_{W(T), n} \subset \cV_{W(T)}$ with $\pr_{\cV_{W(T), n}}\to 1$ pointwise for each $T$. 
Let 
\[
\cU_{W(T), n}=(L_{W(T)}+p^{\lambda_n}_{-\infty}r)^{-1}(\cV_{W(T), n} \times V^{\lambda_n}_{-\lambda_n}) \subset \cU_{W(T)} ,
\]
 and 
\[
\cF_{W(T), n}:=    P_n \circ \cF^{\lambda_n}_{W(T)}: \cU_{W(T), n}\to \cV_{W(T), n}\oplus V^{\lambda_n}_{-\lambda_n}. 
\]
Also, as it is mentioned in the previous section, we can obtain 
\begin{align}
\mathcal{F}_{W(T), n} : \overline{B}(\cU_{W(T), n}, R) /S(\cU_{W(T), n}, R)\to (\cV_{W(T), n}/(\overline{B}(\cV_{W(T), n}, \varepsilon)^c)) \wedge (N_{T,n}/L_{T,n}) 
\end{align}
for some index pairs $(N_{T,n}, L_{T,n})$. 
\begin{lem}
Let $U=\R^m\oplus\C^n$, $V=\R^{m'}\oplus \C^{n'}$, $W=\R^{m''}\oplus \C^{n''}$ be finite dimensional vector spaces with $S^1$ action.
Here, the $S^1$-action on $\R$ is trivial and the $S^1=U(1)$ action on $\C$ is standard.
Let $(\mathcal{I}, y_0)$ be a pointed $S^1$-space equipped with an $S^1$-equivariant pointed homotopy equivalence $h: \mathcal{I} \to W^+$.
Suppose an $S^1$-equivariant pointed map 
\[
f: U^+\to V^+\wedge \mathcal{I}
\]
is given and
$f^{-1}(0, y)$ is empty for all $y \in I$.
Then, $f$ is $S^1$-null-homotopic.
\end{lem}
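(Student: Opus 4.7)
The plan is to construct an explicit $S^1$-equivariant null-homotopy of $f$ by exploiting the fact that the hypothesis forces the image of $f$ to lie in an $S^1$-equivariantly contractible subspace of $V^+\wedge\mathcal{I}$. The key observation is that $V^+\setminus\{0\}$ is $S^1$-equivariantly contractible rel basepoint via linear radial rescaling toward $\infty$.

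First I would introduce the homotopy $\phi:[0,1]\times(V^+\setminus\{0\})\to V^+\setminus\{0\}$ defined by $\phi_t(v)=v/(1-t)$ for $v\in V\setminus\{0\}$ and $t\in[0,1)$, extended by $\phi_1(v)=\infty$ for $v\in V\setminus\{0\}$ and $\phi_t(\infty)=\infty$ for all $t$. Because the $S^1$-action on $V$ is linear, each $\phi_t$ is $S^1$-equivariant, and since $\phi_t$ fixes the basepoint $\infty$ throughout, $\phi$ is an $S^1$-equivariant pointed strong deformation retraction of $V^+\setminus\{0\}$ onto its basepoint. The routine continuity checks at the ideal point and at $t=1$ amount to verifying that $|v/(1-t)|\to\infty$ in the two relevant limits.

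Next I would observe that the hypothesis $f^{-1}(0,y)=\emptyset$ is exactly the statement that the image of $f$ is disjoint from the non-basepoint slice $\{0\}\times(\mathcal{I}\setminus\{y_0\})\subset V^+\wedge\mathcal{I}$, and hence lies in the pointed subspace $(V^+\setminus\{0\})\wedge\mathcal{I}$. Smashing $\phi$ with $\id_{\mathcal{I}}$ yields an $S^1$-equivariant pointed homotopy
\[
\Phi_t=\phi_t\wedge\id_{\mathcal{I}}:(V^+\setminus\{0\})\wedge\mathcal{I}\to(V^+\setminus\{0\})\wedge\mathcal{I}
\]
with $\Phi_0=\id$ and $\Phi_1$ constant at the basepoint. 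The composition $H_t:=\Phi_t\circ f:U^+\to V^+\wedge\mathcal{I}$ is then an $S^1$-equivariant pointed homotopy from $H_0=f$ to the constant map $H_1\equiv *$, exhibiting the desired null-homotopy. The homotopy equivalence $h:\mathcal{I}\to W^+$ in the hypothesis plays no essential role in this argument; all that matters is that $\mathcal{I}$ is a pointed $S^1$-space on which smash products behave as usual. Accordingly I do not anticipate any serious obstacle, and the only delicate point is the basepoint-preserving continuity of $\phi$ at $\infty$ and at $t=1$.
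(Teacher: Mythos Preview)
Your proof is correct and in fact more direct than the paper's. The paper's argument passes through the homotopy equivalence $h$: it sets $f'=(\id_{V^+}\wedge h)\circ f:U^+\to V^+\wedge W^+\cong(V\oplus W)^+$, observes that the hypothesis on $f$ forces $(f')^{-1}(0,0)=\emptyset$, and then invokes the standard fact that an equivariant map between representation spheres which misses a non-basepoint is equivariantly null-homotopic; finally, since $\id_{V^+}\wedge h$ is an equivariant homotopy equivalence, null-homotopy of $f'$ yields null-homotopy of $f$.

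Your route bypasses $h$ entirely by contracting the factor $V^+\setminus\{0\}$ via the linear radial homotopy and smashing with $\id_{\mathcal{I}}$. This has the advantage of showing that the hypothesis that $\mathcal{I}$ be equivalent to a representation sphere is irrelevant---any pointed $S^1$-space would do---which you correctly note. The paper's approach, on the other hand, reduces to a statement about maps between genuine spheres, which some readers may find conceptually cleaner and which avoids having to think about point-set subtleties of the smash product with the abstract index space $\mathcal{I}$. Either way the underlying mechanism is the same: the image of $f$ lands in a subspace that is equivariantly contractible because one can push the $V$-coordinate radially out to $\infty$.
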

\begin{proof}
Define 
\[
f':=(id_{V^+}\wedge h)\circ f: U^+\to V^+ \wedge W^+.
\]
Then, we have the following commutative diagram up to based homotopy: 
\begin{align}
\begin{CD}\label{Tn1}
U^+@>>> V^+\wedge \mathcal{I} \\ 
@V{=}VV  @V{id_{V^+} \wedge h}VV \\ 
U^+@>>> V^+\wedge W^+, 
\end{CD}
\end{align}
It is sufficient to show that  $(f')^{-1}(0, 0)=\emptyset$, but this is obvious since the image of $f$ is contained in $V^+\wedge\{y_0\}$ by assumption and $h$ sends $y_0 \in \mathcal{I}$ to $+ \in W^+$.
\end{proof}

The following lemma is the key lemma to prove \cref{main4'}. 
\begin{lem}\label{non-emp}
For any $T$, there exists a solution $x_T$ to Seiberg--Witten equation on $W$ such that 
\[
\cL_\eta(x_T|_{\{-T\}\times M})-
\cL_\eta(x_T|_{\{T\}\times M})
\leq C. 
\]
\end{lem}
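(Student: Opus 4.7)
The plan is to mimic the proof of \cref{non-empty} from the contact-boundary setting, replacing the Bauer--Furuta refinement $\Psi(W,\xi,\s)$ by the relative Bauer--Furuta invariant $BF(W,\s_W)$, and to use that $SWF(Y,\s_Y) \cong (\C^\delta)^+$ thanks to the homotopy L-space hypothesis. First I would invoke the lemma on $S^1$-equivariant maps just proved: under the identification of $\mathcal{I} = SWF(Y,\s_Y)$ with $W^+$ for $W=\C^\delta$, the non-vanishing of $BF(W,\s_W)$ as an $S^1$-equivariant stable homotopy class implies that its finite-dimensional approximation
\[
\mathcal{F}_{W(T), n}: \overline{B}(\cU_{W(T), n}, R)/S(\cU_{W(T), n}, R)\to (\cV_{W(T), n}/\overline{B}(\cV_{W(T), n}, \varepsilon)^c) \wedge (N_{T,n}/L_{T,n})
\]
cannot have empty basepoint fibers over all of $\mathcal{I}$; otherwise the lemma would force $S^1$-null-homotopy. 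Thus for every sufficiently large $n$ there is an element $\gamma_n \in \overline{B}(\cU_{W(T), n}, R)$ with $\mathcal{F}_{W(T), n}(\gamma_n)=0$. Extracting a weak limit and using the compactness of $C_{W(T)}$, exactly as in the proof of \cref{non-empty}, produces a solution $x_T$ of the $\eta$-perturbed Seiberg--Witten equations on $W(T)$.

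Next I would derive the claimed uniform bound on the drop of the Chern--Simons--Dirac functional across the neck by imitating the proof of \cref{bounded}. Since $\hat\eta$ is compactly supported, $b_1(W)=0$, and $Y$ is a rational homology sphere, I can select a gauge transformation $u^T$ on $W(T)$ such that $u^T\cdot x_T - (A_0,\Phi_0)$ together with its first derivatives is uniformly bounded in $T$, with $u^T$ tending to the identity near $\partial W(T)=Y$ in the double-Coulomb slice. The change-of-gauge formula then yields
\[
\cL_\eta(x_T|_{\{-T\}\times M}) - \cL_\eta(x_T|_{\{T\}\times M}) = \bigl[\cL_\eta(u^T\cdot x_T|_{\{-T\}\times M}) - \cL_\eta(u^T\cdot x_T|_{\{T\}\times M})\bigr] + \mathrm{(period\ terms)},
\]
where the first bracket is bounded by a constant independent of $T$ from the uniform pointwise control of $u^T\cdot x_T - (A_0,\Phi_0)$ on the slices $\{\pm T\}\times M$, and the period terms have the form $\langle (2\pi^2 c_1(\s_M) - 4\pi\eta)\cup [u^T|_{\{\pm T\}\times M}], [M]\rangle$. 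These period terms vanish because $b_1(Y)=0$ forces $[u^T]\in H^1(W(T),\partial W(T);\Z)$, while the Mayer--Vietoris connecting map $H_1(W(T);\Z) \to H_0(\{\pm T\}\times M;\Z)$ is zero by the separating hypothesis on $M$.

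The main obstacle I expect is establishing the uniform gauge fixing in the relative Bauer--Furuta setting. In the Iida setup the conical end supplies a preferred reference $(A_0,\Phi_0)$ at infinity and one exploits $u^T\to 1$ out the end; here the boundary is cylindrical and the double-Coulomb slice condition couples $u^T$ on the interior to the harmonic part on $Y$, so one must verify that a gauge transformation with bounded norm can be chosen uniformly in $T$. This should follow from the standard a priori $L^\infty$ bound on the spinor via the usual maximum-principle argument (the perturbation $\eta$ is $T$-independent in support and norm), together with the Seiberg--Witten compactness on fixed compact pieces away from the neck and elliptic estimates for the double-Coulomb gauge-fixing operator on $W(T)$; all of these are insensitive to the neck length.
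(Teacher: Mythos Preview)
Your approach is essentially the same as the paper's: apply the preceding $S^1$-equivariant lemma with $\mathcal{I}=N_{T,n}/L_{T,n}$ to find $\gamma_n$ mapping to $(0,y_{n,T})$ for some $y_{n,T}\in N_{T,n}$, extract a limiting solution $x_T$, then repeat the argument of \cref{bounded} for the CSD bound. Two minor remarks: your line ``$\mathcal{F}_{W(T),n}(\gamma_n)=0$'' should read $\mathcal{F}_{W(T),n}(\gamma_n)=(0,y_{n,T})$ for some $y_{n,T}$ (the map need not be surjective onto the Conley index factor, as the paper explicitly notes), and the paper obtains the limit by invoking Khandhawit's compactness result \cite{Khan15} for strong $L^2_k$ convergence rather than the weak-limit-plus-compact-nonlinearity argument you import from \cref{non-empty}; either suffices for producing a genuine solution on $W(T)$.
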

\begin{proof}
Apply the above lemma with $U=\cU_{W(T), n}$, $V=\cV_{W(T), n}$, 
\[
(\mathcal{I}, y_0)=(N_{T, n}/L_{T,n}, L_{T,n}/L_{T, n}),
\]
and
$f=\mathcal{F}_{W(T), n}$. 
Since $\mathcal{F}_{W(T), n}$ is not $S^1$ stable homotopy equivalent to the constant map and  we are assuming 
\[
SWF(Y, \s) \cong (\C^\delta)^+, 
\]
the above lemma implies that there is some element $y_{n, T} \in  N_{T}$ such that 
\[
x_{n,T} \in \mathcal{F}_{W(T), n}^{-1} (0, y_{n, T}) \neq \emptyset . 
\]
(Note that $f=\mathcal{F}_{W(T), n}$ itself is not necessarily surjective, unlike $\mathcal{F}'_{W(T), n}$.)
Using \cite{Khan15}, there is a subsequence $\{x_{n_j,T}\}$ of $\{x_{n,T}\}$ such that 
\[
x_{n_j,T} \to  x_T \quad \text{in }{L^2_{k} (W(T))} \text{ as } j\to \infty.
\]
The proof of boundedness of $\cL_\eta(x_T|_{\{-T\}\times M})-
\cL_\eta(x_T|_{\{T\}\times M})$ is essentially the same as in the proof of \cref{bounded}. 
\end{proof}

Then, the following proposition can be also proved using the same argument in \cref{existence}
\begin{prop} \label{existence1}
For each $T$, we have a solution $x_T$ to the $\eta$-perturbed Seiberg--Witten equation on $W(T)$ with 
\[
\cL_\eta(x_T|_{\{-T\}\times M})-
\cL_\eta(x_T|_{\{T\}\times M})
\leq C. 
\]
Consider the $\Spinc$ structure on the cylinder $\R\times M$ 
obtained as the pull back of $\s|_M$ by the projection $\R\times M\to M$ 

Then there exists a solution to the $\eta$-perturbed Seiberg--Witten equation
\begin{equation}\label{SWcyl}
\begin{split}
\frac{1}{2}F^+_{A^t}-\rho(\Phi\Phi^*)_0&=2i\hat{\eta}^+\\
D^+_A\Phi&=0
\end{split}
\end{equation}
 which is translation invariant and in a temporal gauge.
Here, $\hat{\eta}$ is the pull back of $\eta$ by the projection $\R\times M\to M$ and the word temporal gauge means that the $dt$ component of the $\Spinc$ connection vanishes.
\qed
\end{prop}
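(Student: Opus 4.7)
The plan is to follow the extraction procedure at the end of the proof of \cref{existence}, which itself follows \cite[Proposition 8]{KM94}. The hypothesis already provides the solutions $x_T$ on $W(T)$ together with the uniform bound $C$ on the Chern--Simons--Dirac variation across the neck $[-T,T]\times M$, so the task is to use those data to extract a translation-invariant limit on the infinite cylinder.

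First, I would gauge-transform each $x_T$ on the neck into temporal gauge, so that it becomes an honest downward $L^2$-gradient trajectory of the $\eta$-perturbed Chern--Simons--Dirac functional $\cL_\eta$ on the Coulomb slice of the configuration space over $M$. The standard energy identity then reads
\[
\int_{-T}^{T} \|\partial_t x_T(t)\|_{L^2(M)}^{2}\, dt \;=\; \cL_\eta(x_T|_{\{-T\}\times M}) - \cL_\eta(x_T|_{\{T\}\times M}) \;\leq\; C,
\]
a bound independent of $T$.

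Second, combining the standard pointwise a priori estimates for Seiberg--Witten monopoles (Weitzenb\"ock-type spinor bounds together with uniform connection control in temporal gauge, as in \cite{KM07, Khan15}) with the above $L^2$-energy control, the family $\{x_T\}$ restricted to any fixed compact slab $[-R,R]\times M \subset [-T,T]\times M$ admits uniform $L^2_k$-bounds. By Uhlenbeck-type compactness and a diagonal extraction over $R\to\infty$, a subsequence converges to a smooth $\eta$-perturbed Seiberg--Witten trajectory $y_\infty$ of finite type on $\R\times M$ in temporal gauge, of total energy $\int_{\R}\|\partial_t y_\infty(t)\|_{L^2(M)}^{2}\, dt \leq C$. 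Such a finite-energy downward gradient trajectory admits a sequence of times $t_k \to +\infty$ along which $\|\partial_t y_\infty(t_k)\|_{L^2(M)} \to 0$; by the compactness of bounded-Sobolev-norm configurations on $M$ modulo gauge, a further subsequence of $y_\infty(t_k)$ converges to a critical point of $\cL_\eta$, i.e.\ a solution of the 3-dimensional $\eta$-perturbed Seiberg--Witten equations on $M$. Pulled back to $\R\times M$ this critical point defines the desired translation-invariant temporal-gauge solution of \eqref{SWcyl}.

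The main obstacle will be the middle step: securing the uniform higher-Sobolev bounds on the family $\{x_T\}$ on compact slabs independent of $T$. The CSD variation directly controls only the $L^2$-norm of $\partial_t x_T$; promoting this to higher regularity requires local Coulomb gauge-fixing and the standard elliptic monopole bootstrap developed in \cite{KM94, KM07, Khan15}, and one must verify that the finite-dimensional approximation origin of the $x_T$ does not obstruct these bounds. Once those estimates are in place, the remaining compactness extraction and the identification of the limit as a translation-invariant solution proceed verbatim as in the proof of \cref{existence}.
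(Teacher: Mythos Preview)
Your proposal is correct and follows essentially the same route as the paper, which simply declares that the argument is identical to that of \cref{existence} (itself a reference to \cite[Proposition~8]{KM94}). You have in fact written out more detail than the paper provides.

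Two small remarks. First, your worry in the last paragraph about the ``finite-dimensional approximation origin of the $x_T$'' is unfounded here: by \cref{non-emp} each $x_T$ is already an honest solution of the $\eta$-perturbed Seiberg--Witten equations on $W(T)$, obtained as an $L^2_k$-limit of the approximate solutions, so the standard elliptic bootstrapping and Weitzenb\"ock bounds apply to it without further justification. Second, your organization differs slightly from \cite{KM94}: you first pass to a finite-energy trajectory $y_\infty$ on $\R\times M$ and then extract a critical point along a sequence $t_k\to\infty$, whereas the original argument uses the mean-value estimate $\inf_{t\in[-T+1,T-1]}\|\partial_t x_T(t)\|^2_{L^2(M)}\leq C/(2T-2)$ to locate, for each $T$, a slice where the gradient is already small, and takes the limit of those slices directly. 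Both routes are valid and yield the same conclusion.
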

However, this contradicts  \cref{non-ex}. 
This completes the proof of \cref{b=1}(i).

\end{proof}

The proof of \cref{b=1}(iii) is essentially the same as that of \cref{main0}(ii). So we omit it.

\section{Proof of Applications}\label{Application}

In this section, we will first prove Theorem~\ref{main1} which obstructs codimension-0 embeddings of 4-manifolds in symplectic 4-manifolds with $b^+_2 \equiv 3 \operatorname{mod} 4$ and their connected sums. Part of this proof is inspired by the idea to find embedded 2-spheres given in \cite{Y19} by Yasui. Most of the applications that has been mentioned in the introduction followed from \cref{main1}.

\begin{thm}\label{main1}
Let $(W,\om)$ be a connected, weak symplectic filling of a given contact 3-manifold $(Y, \xi)$ with $b_3(W)=0$.
We consider the following two types of 4-manifolds:
\begin{itemize}
    \item $X_1$, $X_2$ and $X_3$ are closed sympectic 4-manifolds with $b_1(X_i)=0$ and $b^+_2(X_i) \equiv 3 \operatorname{mod} 4$, and
    \item $X$ is a closed negative definite 4-manifold  with $b_1(X)=0$.
\end{itemize}
If $W$ has a geometrically isolated 2-handle, then there is no orientation preserving embedding of $\overline{W}$ into $X\#X_1\#  X_2 \# X_3 $. In particular, there is no orientation preserving embedding from $\overline{W}$ to $X$, $X_1$, $X_1\# X_2  $, $X_1\#  X_2 \# X_3$,  $X_1\# X$ and $X_1\# X_2  \# X$. 
\end{thm}

\begin{proof}[Proof of Theorem~\ref{main1}]
  Let us assume that $\overline{W}$ can be embedded in $X$. In order to kill the homology of the boundary, we obtain a new 4-manifold $W_1$ from $W$ with boundary a rational homology sphere $Y_1$ by attaching Weinstein 2-handles on the boundary $Y$ along the generators of $b_1(Y)$. By Theorem~\ref{cob}, $W_1$ is a (weak) symplectic 4-manifold with convex boundary $(Y_1,\xi_1)$. Notice that $W\natural \overline{W}$ is embedded in $W_1\#X$, see Figure~\ref{connectedsum}.
 
\begin{figure}[htbp]
	\begin{center}
	
  \begin{overpic}[scale=0.6,   tics=20]{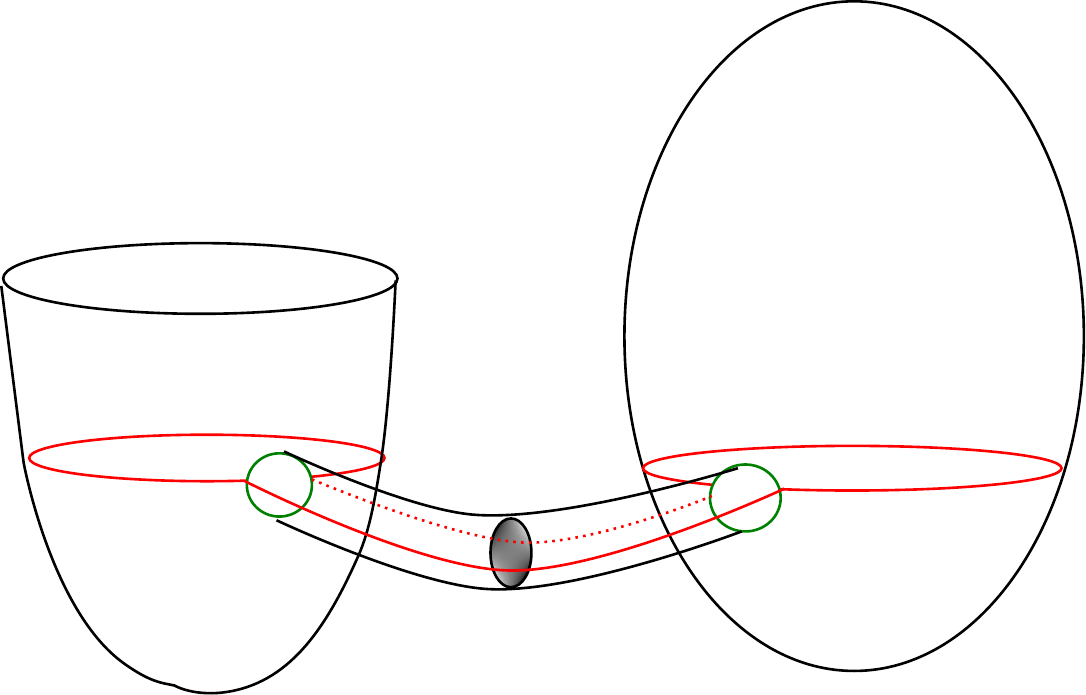}
  
  \put(35,-10){$W_1$}
  \put(140,-10){$X$}
  \put(35,13){\textcolor{red}{$W$}}
  \put(140,13){\textcolor{red}{$\overline{W}$}}

\end{overpic}

\caption{This is an analogous picture of $W\natural \overline{W}$ embedded in $W_1\# X$. }
\label{connectedsum}
\end{center}
\end{figure}
 
 Since $W$ has a geometrically isolated 2-handle, there exists a handle decomposition of $W$ with a 2-handle $h$ that doesn't intersect any 1-handles of $W$ and in particular of $W_1$. And moreover $h$ is non-trivial in $H_2(W_1, \partial W_1;\Z)/\operatorname{Tor} \cong H_2(W_1;\Z)/\operatorname{Tor}$. (Here we used $b_1(\partial W_1)=0$.) Now in $W\natural \overline{W}$, by sliding the 2-handle $h$ over $\overline{h}$ we obtain a new 2-handle $h'$, where $\overline{h}$ is a 2-handle of $\overline{W} $ corresponding to $h$.
 Since $h$ and $\overline{h}$ doesn't intersect any 1-handles of $W\natural \overline{W}$, the attaching sphere of $h'$ bounds a disk in the complement of the core of the 2-handle $h'$, see Figure~\ref{slide}. And thus after attaching the core of $h'$ we get a self-intersection $0$ 2-sphere $S$ embedded in $W_1 \# X$. Notice that the projection map from $H_2(S;\Z)$ to $H_2(W_1;\Z)/\operatorname{Tor}$ is non-trivial. And thus $S$ represent a non-trivial element in $H_2(W_1\# X;\Z)/\operatorname{Tor}.$ So by Theorem~\ref{main2}, $\Psi(W_1\# X, \xi_1, \s_1\#\s_X)=0$. But this contradicts the Theroem~\ref{main0}. Similar arguments work for $X_1\#X_2$ and $X_1\#X_2\#X_3$, since we have non-vanishing result \cref{main0} of Iida's invariant $\Psi$ for such manifolds. 
 
\begin{figure}[htbp]
	\begin{center}
	
  \begin{overpic}[scale=0.6,   tics=20]{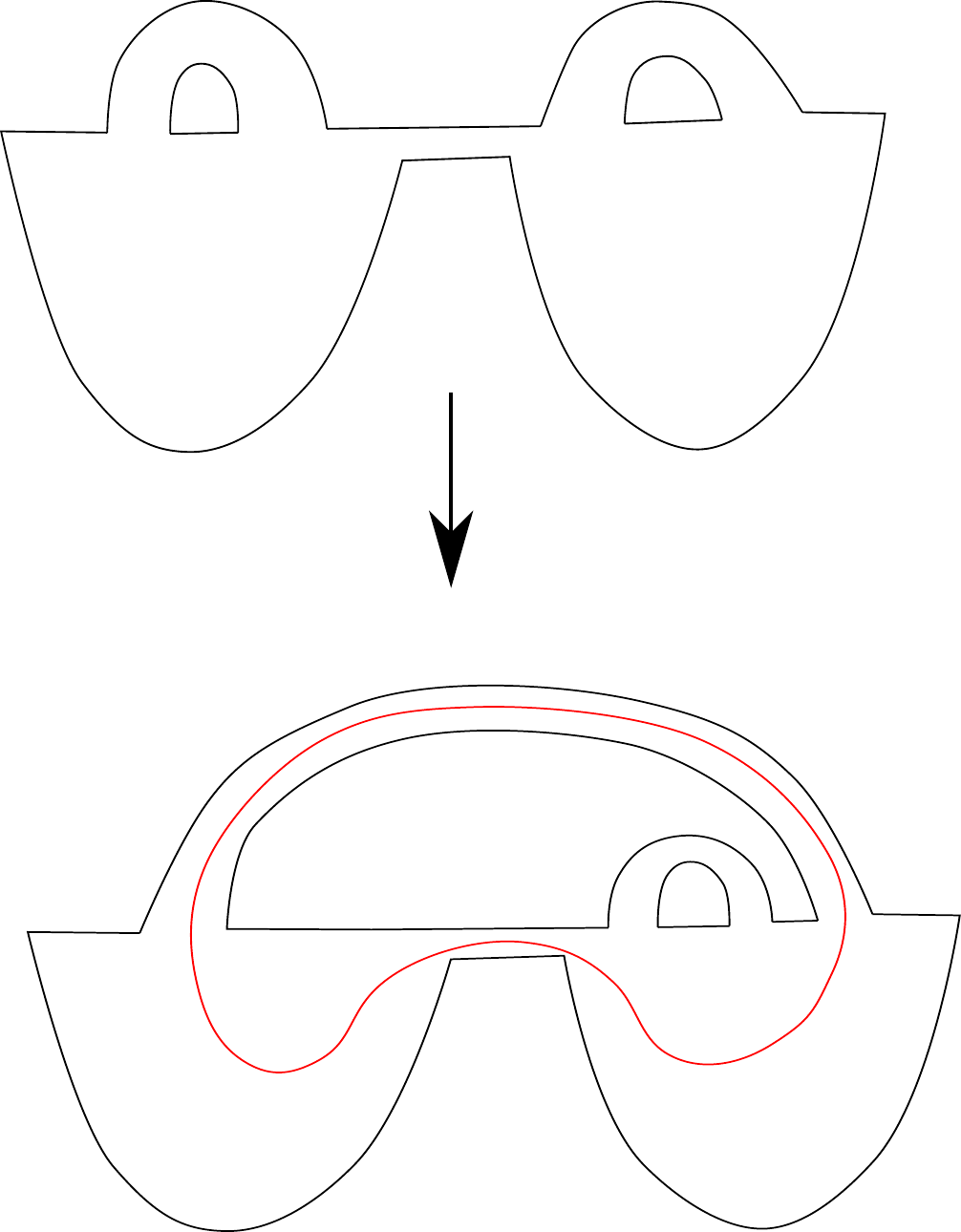}
  \put(35,225){$h$}
  \put(115,225){$\overline{h}$}
  \put(60,210){\textcolor{green}{$\longrightarrow$}}
  \put(85,130){Handle slide $h$ over $\overline{h}$}
  \put(80,100){$h'$}
  \put(155,40){\textcolor{red}{$S$}}
  \put(80,0){$W\natural \overline{W}$}
  \put(67,160){$W\natural \overline{W}$}

\end{overpic}

\caption{ After siding $h$ over $\overline{h}$ we get a self-intersection $0$ sphere $S$ in $W\natural\overline{W}$. }
\label{slide}
\end{center}
\end{figure}
\end{proof}

\begin{rem}\label{remark}
Notice that $b^+_2(X_i) \equiv 3 \operatorname{mod} 4$ in Theorem~\ref{main1}
is important. 
If $X_n(K)$ is an $n$-trace of a knot which is obtained by attaching a 2-handle on the 4-ball along a knot $K\subset S^3$ with framing $n$, then $X_n(K)$ can be embedded in $\C\mathbb P^2\# 2\overline{\C\mathbb P^2}$.  To see this let us assume the case that $n$ is an odd integer. Let $X$ be the result of attaching a 2-handle to $X_n(K)$ along a meridian of the knot $K$ with framing $0$. It is not difficult to show using Kirby calculus that $X$ has a handle decomposition  where two 2-handles are attached on the boundary of $B^4$ along two disjoint unknots with framing $1$ and $-1$. By capping off $X$ with a copy of $B^4$ we get $\C\mathbb P^2\# \overline{\C\mathbb P^2}.$ And in the case $n$ is an even integer, if we blow up the meridian of $K$ once, the framing of $K$ becomes odd. Now if we follow the previous process, we will get $\C\mathbb P^2\# 2\overline{\C\mathbb P^2}.$    
If $n< TB(K)$, then $X_n(K)$ admits a symplectic structure with convex boundary by Theorem~\ref{cob}. But $\overline{X_n(K)}= X_{-n}(\overline {K})$ can still be embedded in  $\C\mathbb P^2\# 2\overline{\C\mathbb P^2}$. This observation shows that the same statement as \cref{main1} does not hold for 4-manifolds with $b^+_2=1$. 

\end{rem}

Although, we imposed $b^+_2(X_i) \equiv 3 \operatorname{mod} 4$ in \cref{main1}, we can also treat symplectic 4-manifolds with $b^+_2(X_i) \equiv 1\operatorname{mod} 4$ if $Y$ has some additional condition. 
\begin{thm}\label{b=1appl}
Let $W$ be a connected, weak symplectic filling of a given contact rational homology 3-sphere $(Y, \xi)$ with $b_3(W)=0$. Suppose $Y$ has a positive scalar curvature metric and $W$ admits a geometrically isolated 2-handle. 
Then if $X$ is a closed symplectic 4-manifold with $b_1(X)=0$ and $b^+_2(X)\geq 2$, there is no orientation-preserving embedding of $\overline{W}$ into $X$.
\end{thm}
\begin{proof}[Proof of Theorem~\ref{b=1appl}]
The relative Bauer-Furuta invariant of $W\# X$ is non-trivial by \cref{b=1'}.
Now the proof of \cref{b=1appl} is the same as that of \cref{main1}, using Theorem~\ref{main4'} instead of \cref{main2'}. 
\end{proof}

Next we will prove \cref{cap}.
This proof uses relative Bauer--Furuta invariant but doesn't use Iida's invariant \eqref{iida}.
\begin{proof}[Proof of \cref{cap}]
Suppose, on the contrary, there exists such a positive definite symplectic cap $X$.
We can take a strong symplectic filling $W$ with $b_1=0$ of $(Y, \xi)$ by the assumption.
The closed manifold $W\cup X$ obtained by gluing $W$ and $X$ along the boundaries has the glued symplectic structure, which we denote by $\Omega$.

Let us denote $X$ with the opposite orientation by $\overline{X}$, which is negative definite and whose oriented boundary is $Y$ .
By assumption, we can take a $\Spinc$ structure $\s_{X}$ on $X$ such that
\[
\frac{-c^2_1( \s_X)+ b_2(X)}{8}= \delta . 
\]

We regard $\s_X$ also as a $\Spinc$ structure on $\overline{X}$.
By the assumption that $Y$ has a positive scalar curvature metric, the Seiberg--Witten Floer homotopy type of $(Y, \s_\xi)$ is given by
\[
SWF(Y, \s_\xi)=(\C^\delta)^+, 
\]
where $\delta=\delta(Y, \s_\xi)$ is the
Fr\o yshov invariant.
Now the relative Bauer--Furuta invariant of  $(\overline{X}, \s_X)$ is a morphism
\[
BF(\overline{X}, \s_X): (\R^0\oplus \C^{\frac{-c^2_1(\s_X)_X+b_2(X)}{8}})^+\to (\R^0\oplus \C^{\delta})^+.
\]

Note that the domain and the codomain have the same dimension here.




The $S^1$-invariant part of $BF(\overline{X}, \s_X)$ is induced by a linear isomorphism and thus homotopy equivarence, so  $BF(\overline{X}, \s_X)$ is $S^1$ equivariant homotopy equivalence by \cite[Lemma 3.8]{BF04}.
On the other hand, since $W\cup X$ is a closed symplectic 4-manifold with $b^+\geq 2$ and $b_1=0$ (We use the assumption $b^+(X)\geq 2$, $b_1(X), b_1(W)=0$ here.), its $S^1$-equivariant Bauer--Furuta invariant $BF(W\cup X, \s_\Omega)$ is non-trivial.

Combined with the gluing result of \cite{Man07, KLS18II},   
the relative Bauer--Furuta invariant of $(W\cup X)\# \overline{X}$ is given by
\[
BF((W\cup X)\# \overline{X}, \s_\Omega \# \s_X)=BF(W\cup X, \s_\Omega)\wedge BF(\overline{X}, \s_X)=BF(W\cup X, \s_X).
\]
Thus $BF((W\cup X)\# \overline{X}, \s_\Omega \# \s_X)$ is non-trivial.
However, as given in the proof of \cref{main1}, we can construct an embedded 2-sphere $S$ inside $(W\cup X)\# \overline{X}$ as a non-torsion element of $H_2((W\cup X)\# \overline{X}, \partial ((W\cup X)\# \overline{X}); \Z)$  with self intersection number $0$.  So, using \cref{main0}, one can conclude that 
\[
BF((W\cup X)\# \overline{X}, \s_\Omega \# \s_X)=0 . 
\]
This gives a contradiction. 
\end{proof}

Now we will prove \cref{cap1} which is giving some constraints on the topology of symplectic caps of $S^3$ and $\Sigma(2,3,5).$
\begin{proof}[Proof of \cref{cap1}]
We prove the case of $S^3$ and that of $\Sigma(2, 3, 5)$ respectively: 
\begin{itemize}
    \item 
It is sufficient to show that any compact oriented positive definite 4-manifold $X$  with $b_1(X)=0$ and $\partial X=-S^3$ admits a $\Spinc$ structure $\s_X$ satisfying 
\[
\frac{-c^2_1(\s_X)+b_2(X)}{8}=0. 
\]
By Donaldson's theorem A \cite{Do83}, we can conclude that the intersection form of $X$ is isomorphic to $ (+1)^{\oplus b_2(X)}$. 
We write by $\{v_i\}_{1 \leq i \leq b_2(X)}$ a basis of the intersection form of $X$ representing $(+1)^{\oplus b_2(X)}$.

Since $v_1 +\cdots + v_{b_2(X)}$ is a characterisic vector, there exist a $\Spinc$ structure $\s_X$ such that its first Chern class is equal to it.
Obviously $c^2_1(\s_X)=b_2(X)$ holds, so this is a desired $\Spinc$ structure.
\item 
It is sufficient to show that any compact oriented positive definite 4-manifold $X$  with $b_1(X)=0$ and $\partial X=-\Sigma(2,3,5)$, and having no 2-torsion on its homology admits a $\Spinc$ structure $\s_X$ satisfying 
\[
-c^2_1(\s_X)+b_2(X)=8. 
\]
By Scaduto's theorem \cite[Theorem 1.3]{Sca18}, we can conclude that the intersection form of $X$ is isomorphic to $E_8 \oplus  (+1)^{\oplus (b_2(X)-8)} $. We write by $\{v_i\}_{1 \leq i \leq b_2(X)}$ a basis of the intersection form of $X$ representing $E_8 \oplus  (+1)^{\oplus(b_2(X)-8)} $. 
Since $v_9 +\cdots + v_{b_2(X)}$ is a characterisic vector, there exist a $\Spinc$ structure $\s_X$ such that its first Chern class is equal to it.
Obviously $c^2_1(\s_X)=b_2(X)-8$ holds, so this is a desired $\Spinc$ structure.
\end{itemize}
\end{proof}

 Now we will prove Theorem~\ref{ben} which is a Bennequin type inequality for symplectic caps.
 
 \begin{proof}[Proof of Theorem~\ref{ben}]
 If $K$ bounds a disk in $X$ with self-intersection number $n>0$, then when consider $X$ as a symplectic cap of $S^3$, the boundary orientation get reversed. Thus by gluing a symplectic filling $(D^4, \omega_{std})$, we can get a closed symplectic 4-manifold where $X_n(\overline{K})$ is smoothly embedded. But notice that by assumption, $\overline{X_{n}(\overline{K})}= X_{-n}(K)$ which is Stein fillable. So the proof of Theorem~\ref{main1} and \cref{main2'}(ii) gives a contradiction.  
 \end{proof}
 \begin{rem}\label{additional}
 By the similar idea of the proof of Theorem~\ref{ben}, one can prove that: 
 Let $(X,\om)$ be a symplectic cap for $(S^3,\xi_{std})$ with $b_1(X)= 0$ and $b^+_2(X) \equiv 3 \operatorname{mod} 4$. If $K$ is a Legendrian knot in $(S^3,\xi_{std})$ with $tb(K)>0$, then $K$ does not bound a smoothly embedded surface $\Sigma$ in $X$ with $[\Sigma]\cdot[\Sigma]\geq 0$, $g(\Sigma)>0$ and 
 \[
 | \langle c_1 (\s_X) , [\Sigma] \rangle | + [\Sigma]\cdot [\Sigma]\leq 2 g(\Sigma) - 2, 
 \]
where $\s_X$ is the $\Spinc$ structure coming from $\om$ and $\langle c_1 (\s_X) , [\Sigma] \rangle$ is defined by taking a bound of $\overline{K}$ in $D^4$. We easily see $\langle c_1 (\s_X) , [\Sigma] \rangle$ does not depend on the choices of such a bounding. 
In the proof, we use \cref{main2}(iii). 
 \end{rem}
Now we will prove \cref{h-slice} which is giving an obstruction for a knot being H-slice in a 4-manifold with boundary $S^3.$
 \begin{proof}[Proof of \cref{h-slice}]
 Notice that if $K$ is H-slice in $X^{\circ}$ then $X_0(\overline{K})$, which has an geometrically isolated 2-handle, embedded in $X$. But follows from Theorem~\ref{cob} $X_0(K)$ has a symplectic structure with convex boundary since $TB(K)>0$. Thus it is contradicting Theorem~\ref{main1}. Same arguments work for $X_1\#X_2$ and $X_1\#X_2\#X_3$.
 \end{proof}
 
 \begin{rem}\label{fdim}
 In \cite{MR06}, it is proven that, for a $\Spinc$ 4-manifold with contact boundary, the generalized Bennequin inequality holds when the Kronheimer--Mrowka's invariant \cite{KM97} is not zero. 
 \cref{h-slice} can be seen as results for symplectic caps. Here we confirm that Kronheimer--Mrowka's invariant always vanishes for any symplectic cap with $(S^3, \xi_{std})$-boundary. 
 Recall that Kronheimer--Mrowka's invariant is defined to be zero unless the virtual dimension of the moduli space is zero.
 We show that for any symplectic cap $X$ with $(S^3, \xi_{std})$-boundary, the virtual dimension is odd. 

This follows from the following two facts by regarding $X =(X\cup_{S^3} D^4)\# D^4$:
\begin{enumerate}
\item
For any closed oriented 4-manifold $X$, the parity of the virtual dimension of the Seiberg--Witten moduli space is independent of the $\Spinc$ structure.
Moreover, they are even if there exists an almost complex structure  on $X$.
\item
Let $(X_1, \s_1)$ be a closed oriented $\Spinc$ 4-manifold and $(X_2, \xi, \s_2)$ be a $\Spinc$ 4-manifold with contact boundary  with $\s|_{\partial X_2}=\s_\xi$.
Then
\[
\langle e(S^+_{X_1\# X_2}, \Phi_0), [X_1\# X_2, \partial (X_1\# X_2)]\rangle = \langle e(S^+_{X_1}), [X_1]\rangle+\langle e(S^+_{X_1}, \Phi_0), [X_2, \partial X_2]\rangle+1, 
\]
where $\Phi_0$ is a non-vanishing section on $\partial (X_1\# X_2)=\partial X_2 $ constructed as in Section 2.

\end{enumerate}
We first prove (1). 
Recall that the virtural dimension is the sum of the index for $\Spinc$ Dirac operator and Atiyah--Hitchin--Singer (AHS) operator.
The former is even since $\Spinc$ Dirac operator is complex operator and AHS operator does not involve $\Spinc$ structure.
This implies the first part of (1).

The virtual dimension of Seiberg--Witten moduli space for  a closed $\Spinc$ 4-manifold $(X, \s)$ is given by
\[
\langle e(S^+), [X]\rangle=\frac{1}{4}(c_1(\s)^2-2\chi(X)-3\sigma(X))
\]
(See \cite[lemma 28.2.3]{KM07},for example).
If $\s$ comes from an almost complex structure, we can give a non-vanishing section of $S^+$
(See \cite[lemma 2.1]{KM97}, for example).
Thus $e(S^+)=0$. This implies the second claim of (1).
\par
We next prove (2). 
We denote 
$X^\circ_i$ the manifold obtained from $X_i$ by removing a small 4-ball from the interior and regard
\[
X_1\# X_2=X^\circ_1\cup_{S^3}X^\circ_2.
\]

We also consider 
\[
S^4=D^4_+\cup_{S^3} D^4_-.
\]
equipped with the unique $\Spinc$ structure.
We fix a non-vanishing section of the spinor bundle of the unique $\Spinc$ structure on $S^3$.
We identify this $S^3$  with $\partial X^\circ_1$ and $\partial D^4_+$ at the same time, so we obtained a non-vanishing section of the positive spinor bundle on
\[
\partial D^4_+ \amalg \partial X^\circ_1 \amalg \partial X_2 \subset S^4\amalg  (X^\circ_1\cup_{S^3}X^\circ_2).
\]
We can extend  this section to $X_1\# X_2$ so that the zero set is isolated.
This section can be also regarded as a section on
\[
(X^\circ_1\cup_{S^3} D^4_-)\amalg (D^4_+ \cup_{S^3} X^\circ_2)=X_1 \amalg X_2.
\]
in the obvious way.
The number of zero points counted with sign gives the desired equality.
Here, we use the fact that the contribution from  $S^4$ is $-1$.
Alternatively, we can deduce the same equality from excision principle of index.
 \end{rem}

  Now we will prove \cref{infinite} by constructing infinitely many topologically H-slice but not smoothly H-slice knots.
 \begin{proof}[Proof of \cref{infinite}]
 Let $K$ be a knot in $S^3$ with $TB(K)>0$. It is easy to see that the untwisted positive Whitehead double $Wh_0^+(K)$ of $K $ has $TB(Wh_0^+(K))>0.$ Since $Wh_0^+(K)$ is topologically slice in $B^4$, it is topologically H-slice in $X$. But it is not smoothly H-slice in $X$ by Theorem~\ref{h-slice}. Now, we take a concrete family of knots as torus knots $\{T_{2, 2^n-1}\}_{n\in \Z_{>0}} $. In \cite{HK12}, Hedden-Kirk proved that $\{Wh_0^+ (T_{2, 2^n-1}) \}_{n\in \Z_{>0}}$ are linearly independent in the knot concordance group. 
  \end{proof}
 
 \begin{rem}
 There are many preceding studies of linear independence of infinitely many topologically slice knots in the knot concordance group. Let $\mathcal{T}$ be the subgroup of the knot concordance group generated by topologically slice knots. 
 Endo \cite{E95} proved that $\mathcal{T}$ contains $\Z^\infty$ (a certain class of preztel knots) as a subgroup
 by using Furuta's result \cite{Fu90}. There are many developments of studies of $\Z^\infty$-subgroups in $\mathcal{T}$: 
 In Yang-Mills gauge theory side, there are several related studies of $\Z^\infty$-subgroups in $\mathcal{T}$ \cite{HK12, PJ17, NST19}.
 In Heegaard Floer theory, several concordance invariants were constructed: the tau-invariant $\tau : \mathcal{C} \to \Z $ \cite{OS03}, the nu$^+$ invariant $\nu^+: \mathcal{C} \to \Z_{\geq 0}   $ \cite{HW16}, and the upsilon invariant $\Upsilon : \mathcal{C} \to \operatorname{PL}([0,2], \R) $ \cite{OSS17}. 
Moreover, there are several studies finding $\Z^\infty$-subgroup or summands in $\mathcal{T}$ in Heegaard Floer theory \cite{OSS17, HK18, Hom19, ASA20}.

 \end{rem}
 Now we will prove \cref{top h} which focuses on knots in $K3^{\circ}$.
 \begin{proof}[Proof of \cref{top h}]
 For a knot $K$ with $TB(K)>0$ if we prove that $Wh_0^+(K)$ is smoothly slice in $K3$ then by using Theorem~\ref{infinite} we can prove that it is topologically H-slice and smoothly slice but not smoothly H-slice. Since the unknotting number of $Wh_0^+(K)$ is 1, by \cite[Corollary 2.8]{manolescu}, it is smoothly slice in $K3$.
 Now, we take a concrete family of knots as torus knots $\{T_{2, 2^n-1}\}_{n\in \Z_{>0}} $. 
 Again, in \cite{HK12}, Hedden-Kirk proved that $\{Wh_0^+ (T_{2, 2^n-1}) \}$ are linearly independent in the knot concordance group.
 \end{proof}
 
 Now we will use our techniques to obstruct knots in $S^3$ from being rationally slice.

\begin{proof}[Proof of Theorem~\ref{slice}]
Let $K$ be a slice knot in a rational homology ball $W$ with boundary $S^3$ and $TB(K)>0$. We obtained $X$, a closed rational homology 4-sphere, by capping-off $W$ with a 4-ball. If $K$ is slice in $W$ then $X_n(\overline{K})$ can be embedded in $X$ for some $n\in \Z$. We will first show that $n=0$. If not, then boundary of $X_n(\overline{K})$ is a rational homology sphere. By Mayer--Vietoris formulation, $b_2(X)\geq b_2(X_n(\overline{K}))= 1$. But $X$ is a rational homology 4-sphere, so contradiction. Since $TB(K)>0$, $X_0(K)$ has a symplectic structure by Theorem~\ref{cob}. Attach a Weinstein 2-handle on $X_0(K)$ along the meridian of $K$ to obtain a new symplectic 4-manifold $X_1$ with convex boundary $(Y_1,\xi_1).$ Note that $X_1$ has an isolated 2-handle and $b_3(X_1)=0$ by the construction. 
But similar to the proof of Theorem~\ref{main1} we can prove that there exists a self intersection 0 sphere which represent a non-trivial element of $H_2(X_1\#X;\Z)/\operatorname{Tor}$. So $\Psi(X_1\#X,\xi_1)=0$ by Theorem~\ref{main2}. But this contradicts Theorem~\ref{main0}.
\end{proof}

\begin{proof}[Proof of \cref{neg def inf}]
Again, we set 
\[
K_n :=Wh_0^+ (T_{2, 2^n-1}). 
\]
We saw $TB(K_n)> 0$ and $\{K_n\}$ are linearly independent by using the result in \cite{HK12}. Now the result follows from \cref{slice}. 
\end{proof}

 Finally we will use our invariant to detect exotic structures on compact 4-manifolds with boundary.
\begin{proof}[Proof of Theorem~\ref{exotic}]

Notice that if a 3-manifold $Y$ bounds an exotic pair of 4-manifolds $X$ and $X'$ then by reversing orientation, $\overline{Y}$ bounds an exotic pair $\overline{X}$ and $\overline{X'}$. So, without loss of generality, we assumed that $(W,\om)$ is a weak-symplectic filling of $(Y,\xi)$ with $b_3(W)=0$ by Corollary~\ref{main0}. If $X= W\# K3\#\overline{\C \mathbb{P}^2}$ then $\Psi(X,\xi,\s)\neq 0$. So by the result of Freedman, \cite{freedman} $K3\#\overline{\C \mathbb{P}^2}$ is homeomorphic to $3\C\mathbb{P}^2\#20\overline{\C\mathbb{P}^2}$ and thus $X$ is homeomorphic to $X'= W\#3\C\mathbb{P}^2\#20\overline{\C\mathbb{P}^2}$. Now notice that there exists a smooth self-intersection $0$ 2-sphere representing the element $\alpha + \overline{\alpha} \in H_2(\C\mathbb{P}^2\#\overline{\C\mathbb{P}^2};\Z)\subset H_2(X';\Z)/\operatorname{Tor}.$ So by Theorem~\ref{main2}, $\Psi(X',\xi)=0$. Thus $X$ and $X'$ are not diffeomorphic. 
\end{proof}

\bibliographystyle{plain}
\bibliography{tex}

\end{document}